\renewcommand\eqref[1]{(\ref{#1})} 
\newcommand{\Q}{\mathbb{Q}}
\newcommand{\R}{\mathbb{R}}
\title[Integro-differential equations on locally compact groups]{Evolutionary integro-differential equations of scalar type on locally compact groups}
\author[S. G\'omez Cobos]{Santiago G\'omez Cobos}
\address{
	Santiago G\'omez Cobos:
	\endgraf
	Department of Mathematics: Analysis, Logic and Discrete Mathematics
	\endgraf
	Ghent University, Krijgslaan 281, Building S8, B 9000 Ghent
	\endgraf
	Belgium
	\endgraf
	{\it E-mail address} {\rm davidsantiago.gomezcobos@ugent.be}}
\author[J. E. Restrepo]{Joel E. Restrepo}
\address{
	Joel E. Restrepo:
	\endgraf
	Department of Mathematics: Analysis, Logic and Discrete Mathematics
	\endgraf
	Ghent University, Krijgslaan 281, Building S8, B 9000 Ghent
	\endgraf
	Belgium
	\endgraf
and
\endgraf
Department of Mathematics
\endgraf
Cinvestav IPN, Mexico City 
\endgraf
Mexico
\endgraf
	{\it E-mail address} {\rm joel.restrepo@cinvestav.mx; cocojoel89@yahoo.es}}
\author[M. Ruzhansky]{Michael Ruzhansky}
\address{
	Michael Ruzhansky:
	\endgraf
	Department of Mathematics: Analysis, Logic and Discrete Mathematics
	\endgraf
	Ghent University, Krijgslaan 281, Building S8, B 9000 Ghent
	\endgraf
	Belgium
	\endgraf
	and
	\endgraf
	School of Mathematical Sciences
		\endgraf Queen Mary University of London 
			\endgraf
		United Kingdom
			\endgraf
	{\it E-mail address} {\rm michael.ruzhansky@ugent.be}}
\subjclass[2010]{43A15, 45K05, 35B40.}
\keywords{Locally compact groups, Strichartz type estimates, linear and nonlinear partial integro-differential equations of scalar type, asymptotic estimates, local well-posedness.}
\newtheoremstyle{theorem}
{10pt}          
{10pt}  
{\sl}  
{\parindent}     
{\bf}  
{. }    
{ }    
{}     
\theoremstyle{theorem}
\numberwithin{equation}{section}
\theoremstyle{plain}
\newtheorem{thm}{Theorem}[section]
\newtheorem{prop}[thm]{Proposition}
\newtheorem{cor}[thm]{Corollary}
\newtheorem{lem}[thm]{Lemma}
\theoremstyle{definition}
\newtheorem{defn}[thm]{Definition}
\newtheorem{rem}[thm]{Remark}
\newtheorem{ex}[thm]{Example}
\newtheoremstyle{defi}
{10pt}          
{10pt}  
{\rm}  
{\parindent}     
{\bf}  
{. }    
{ }    
{}     
\theoremstyle{defi}
\begin{document}
 	\begin{abstract} 
We study existence, uniqueness, norm estimates and asymptotic time behaviour (in some cases can be claimed to be sharp) for the solution of a general evolutionary integral (differential) equation of scalar type on a locally compact separable unimodular group $G$ governed by any positive left invariant operator (unbounded and either with discrete or continuous spectrum) on $G$. We complement our studies by proving some time-space (Strichartz type) estimates for the classical heat equation, and its time-fractional counterpart, as well as for the time-fractional wave equation. The latter estimates allow us to give some results about the well-posedness of nonlinear partial integro-differential equations. We provide many examples of the results by considering particular equations, operators and groups through the whole paper. 
\end{abstract}
	\maketitle
            
            \tableofcontents

\section{Introduction}

Locally compact groups and their theory have been developed through the years by many researchers who addressed different problems about properties, structure, and many other issues; see e.g. the books \cite{intro-nuevo-3,intro-nuevo1,intro-nuevo-2,intro-nuevo-4}. Also, we mention the following papers \cite{intro-p1,intro-nuevo11,intro-nuevo1} and the references therein. We also refer to several papers on general functional and difference equations on locally compact groups \cite{intro-p4,intro-p3,intro-p2}. The generality and lack of the smooth structure on such groups have made difficult the studies of partial differential equations in this setting. Nevertheless, Fourier multipliers of Hörmander type on a locally compact group have been studied in \cite{RR2020}. From the point of view of partial differential equations, authors gave several applications by obtaining regularity properties of linear evolution equations. This was
done through an application of a spectral multiplier theorem, which was obtained by
using von Neumann algebras theory and the generalized non-commutative Lorentz spaces. By these results, one can translate the regularity problem to the study of boundedness of propagators
of the form $\varphi(|\mathscr{L}|)$, where we denote for the operator $\mathscr{L}$ by $\mathscr{L}=U|\mathscr{L}|$ its polar decomposition, for $\varphi$ a positive monotonically decreasing vanishing at infinity continuous function. If we can provide good asymptotic behaviour of the traces of the spectral projections $E_{(s,+\infty)}(|\mathscr{L}|)$, the problem changes to that of imposing good properties on the function $\varphi$.

In \cite{RR2020}, one considered the setting of semifinite von Neumann algebras. This assumption is very general since it allows us to consider simultaneously some type I and II groups. Hence, it is not necessary to restrict ourselves to any of those two categories. So, the type of locally compact groups that we can consider here is very large. For instance, compact, semi simple, exponential, nilpotent, some solvable ones, real algebraic, and many more. In particular, let us mention some well-known ones: the Euclidean space $\mathbb{R}^n$, the Heisenberg group $\mathbb{H}^n$, any graded Lie group, Engel group, Cartan group, any connected Lie group and the group of $p$-adic numbers.  

In this paper, we complement our previous studies by considering more general linear and nonlinear equations, and improving (extending) some of the results from \cite{RR2020,SRR,cras}. Let us now describe the setting and main ideas of the paper. The regularity is mainly associated with the study of Fourier (H\"ormander type) and spectral multipliers on a locally compact separable unimodular group. Briefly, let us explain it. In general, some of the new results of Fourier multipliers \cite{RR2020,SRR} can be applied to obtain results on spectral multipliers. The application of these latter results will help to estimate some norms for the solution of an evolutionary integral equation of scalar type (see equation \eqref{volterra-e}) on a locally compact separable unimodular group. The norms can be reduced to the estimation of its propagator (time dependence) in the noncommutative Lorentz space norm \cite{[51]} (the space is associated with a semifinite von Neumann algebra \cite{von,[46],von2}), that involves calculating the trace of the spectral projections of the operator $\mathscr{L}$ (\cite{pacific}). The considered operator (unbounded) can be any positive linear left invariant operator acting on $G$ with the possibility (generality) of having either continuous or discrete spectrum. Here and thereafter, we set up by convention that a positive operator means self-adjoint and nonnegative. Therefore, the operators which in principle can be involved are e.g.: Laplacians, sub-Laplacians, Rockland operators, subcoercive positive operators, and many more. Moreover, we are able to establish asymptotic time behavior for these equations that will depend on the considered kernel. An important remark is that at the first stage in \cite{RR2020}, it was only possible to apply those results for certain type of equations like the heat equation. Nevertheless, by some additional studies and the current investigation, we have enlarged the classes of equations that can be considered, e.g. heat and wave types, Schrödinger type, multi-term heat type, abstract Cauchy problem with variable coefficient, Rayleigh–Stokes type equations, kernels associated with Bernstein functions, etc. The first result in this direction was given in \cite{SRR}.

Now we provide the path and route with the statements of our problems. This will give us a better clarity of our present research. Hence, we first begin by mentioning the $L^p-L^q$ boundedness result for an operator $\mathscr{L}$ that will be applied for more general operators of the form $\varphi(\mathscr{L})$ for certain functions $\varphi.$ Full details about notations and objects can be found in the preliminaries (Section \ref{preli}). 

\begin{thm}\cite[Theorem 5.1]{RR2020}
Let G be a locally compact unimodular group. Let $1<p\leqslant 2\leqslant q<+\infty$ and assume that $\mathscr{L}$ is a left-invariant linear continuous operator on the Schwartz-Bruhat space $S(G)$. Then we have
\[
\|\mathscr{L}\|_{L^p(G)\to L^q(G)}\lesssim \|\mathscr{L}\|_{L^{r,\infty}(VN_R(G))}, 
\]
where $L^{r,\infty}$ is the noncommutative Lorentz spaces with $\frac{1}{r}=\frac{1}{p}-\frac{1}{q}$ $(p\neq q)$. While for $p=q=2$, we have the following sharp estimate
\[
\|\mathscr{L}\|_{L^2(G)\to L^2(G)}=\sup_{s\in\mathbb{R}^+}\mu_{s}(\mathscr{L}).
\]
\end{thm}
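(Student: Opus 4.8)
The plan is to transport the problem to the Plancherel picture on $VN_R(G)$, where $\mathscr{L}$ becomes multiplication by its symbol, and then to combine the noncommutative Hausdorff--Young inequality with Hölder's inequality in the noncommutative Lorentz scale; the portion of the range not covered by that argument will be recovered by duality. The sharp endpoint $p=q=2$ is essentially Plancherel, and I would dispose of it first.

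\emph{The case $p=q=2$.} Here I would use that the group Fourier transform $\mathcal{F}$ is a unitary of $L^2(G)$ onto $L^2(VN_R(G))$ intertwining the left-invariant operator $\mathscr{L}$ with multiplication by its symbol (an operator affiliated with $VN_R(G)$). Hence $\|\mathscr{L}\|_{L^2(G)\to L^2(G)}$ equals the von Neumann algebra norm $\|\mathscr{L}\|_{VN_R(G)}$, and since $s\mapsto\mu_s(\mathscr{L})$ is non-increasing with $\lim_{s\to 0^+}\mu_s(\mathscr{L})=\|\mathscr{L}\|_{VN_R(G)}$, this equals $\sup_{s\in\mathbb{R}^+}\mu_s(\mathscr{L})$ (both sides being $+\infty$ when $\mathscr{L}$ is unbounded).

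\emph{The case $1<p\le 2\le q<\infty$, $p\neq q$.} I would first treat the sub-range $\tfrac1p+\tfrac1q\le 1$, in which necessarily $q>2$. For $f\in S(G)$ one has $\mathcal{F}(\mathscr{L}f)=\mathscr{L}\cdot\mathcal{F}f$; the noncommutative Hausdorff--Young inequality gives $\|\mathcal{F}f\|_{L^{p'}(VN_R(G))}\le\|f\|_{L^p(G)}$, Hölder's inequality in the noncommutative Lorentz scale gives $\|\mathscr{L}\cdot\mathcal{F}f\|_{L^{q',p'}(VN_R(G))}\lesssim\|\mathscr{L}\|_{L^{r,\infty}(VN_R(G))}\|\mathcal{F}f\|_{L^{p'}(VN_R(G))}$ (note $\tfrac1{q'}=\tfrac1r+\tfrac1{p'}$), and real interpolation between $\mathcal{F}^{-1}\colon L^2(VN_R(G))\to L^2(G)$ and $\mathcal{F}^{-1}\colon L^1(VN_R(G))\to L^\infty(G)$ gives $\mathcal{F}^{-1}\colon L^{q',p'}(VN_R(G))\to L^{q,p'}(G)$. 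Since $\tfrac1p+\tfrac1q\le 1$ is equivalent to $p'\le q$, we have the embedding $L^{q,p'}(G)\hookrightarrow L^q(G)$, and chaining these estimates (together with the density of $S(G)$ in $L^p(G)$) yields $\|\mathscr{L}\|_{L^p\to L^q}\lesssim\|\mathscr{L}\|_{L^{r,\infty}(VN_R(G))}$. The complementary sub-range $\tfrac1p+\tfrac1q\ge 1$ then follows by duality: $\|\mathscr{L}\|_{L^p\to L^q}=\|\mathscr{L}^*\|_{L^{q'}\to L^{p'}}$, the adjoint $\mathscr{L}^*$ is again left-invariant with $\|\mathscr{L}^*\|_{L^{r,\infty}(VN_R(G))}=\|\mathscr{L}\|_{L^{r,\infty}(VN_R(G))}$ (singular values are $*$-invariant), and the pair $(q',p')$ lies in the sub-range already treated, since $q'\le 2\le p'$, $\tfrac1{q'}+\tfrac1{p'}=2-\tfrac1p-\tfrac1q\le 1$, and $\tfrac1{q'}-\tfrac1{p'}=\tfrac1r$.

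\emph{Where the work is.} The delicate point is the exponent bookkeeping: the Hausdorff--Young/Hölder chain does not close on the whole range but only when $p'\le q$, because Hölder in the Lorentz scale places $\mathscr{L}\cdot\mathcal{F}f$ in $L^{q',p'}(VN_R(G))$ and the companion function space $L^{q,p'}(G)$ embeds into $L^q(G)$ only in that regime — which is precisely why the duality step is needed for the rest. The other thing to be careful about is that the Fourier-side facts invoked here — the noncommutative Hausdorff--Young inequality with respect to the Plancherel trace on $VN_R(G)$, Hölder in the noncommutative Lorentz spaces, and the identification of a left-invariant continuous operator on $S(G)$ with multiplication by a symbol affiliated with $VN_R(G)$ — hold in this generality; these are supplied by the preliminaries and by \cite{RR2020}. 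The remaining ingredients (real interpolation and monotonicity of the singular value function) are routine.
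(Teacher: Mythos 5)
Your argument is correct, and it is worth noting that the present paper does not prove this statement at all --- it is quoted verbatim from \cite[Theorem 5.1]{RR2020}, so the only proof to compare against is the one in that reference. Your reconstruction has the same skeleton as the original: dispose of $p=q=2$ by Plancherel and the monotonicity of $s\mapsto\mu_s(\mathscr{L})$, reduce by duality (using $\mu_s(\mathscr{L}^*)=\mu_s(\mathscr{L})$ and the fact that the adjoint of a left-invariant operator is left-invariant) to the sub-range $q\geqslant p'$, and there run a Fourier-side estimate in the sense of H\"ormander's original scheme. The one genuine difference is the mechanism in that sub-range: \cite{RR2020} first proves a noncommutative Paley-type inequality, interpolates it with the Hausdorff--Young inequality to obtain a Hausdorff--Young--Paley inequality, and applies the latter with weight $|\sigma|^{r}$ to land directly in $L^{q'}(VN_R(G))$, finishing with the plain dual Hausdorff--Young map $L^{q'}\to L^q$; you instead keep the plain Hausdorff--Young inequality, pay for the symbol with O'Neil--H\"older in the noncommutative Lorentz scale (landing only in the larger space $L^{q',p'}(VN_R(G))$), and compensate with the real-interpolated inverse transform $L^{q',p'}(VN_R(G))\to L^{q,p'}(G)\hookrightarrow L^q(G)$, the last embedding being exactly where $p'\leqslant q$ is used. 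The two routes are of comparable depth --- yours trades the Paley inequality for the noncommutative O'Neil inequality and the Lorentz-space identification of the real interpolation couple $(L^1(M),L^2(M))$, both of which are available for semifinite von Neumann algebras via the Fack--Kosaki theory of generalized $s$-numbers --- and your exponent bookkeeping (including $\tfrac1{q'}=\tfrac1r+\tfrac1{p'}$ and the verification that the dual pair $(q',p')$ falls back into the treated sub-range) checks out.
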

Having in mind that the boundedness of the right hand side of the above estimate depends intrinsically of the noncommutative Lorentz space \cite{[51]}, in \cite{RR2020}, it was also proved in a semifinite von Neuman algebra $M$ that:

\begin{thm}\cite[Theorem 6.1]{RR2020}\label{intro1}
    Let $\mathscr{L}$ be a closed (maybe unbounded) operator affiliated with a semifinite von Neuman algebra $M\subset \mathcal{B}(\mathcal{H})$. Let $\varphi$ be a monotonically decreasing continuous function on $[0,+\infty)$ such that $\varphi(0)=1$ and $\displaystyle\lim_{v\to+\infty}\varphi(v)=0.$ Then for every $1\leqslant r<+\infty$ we have the inequality 
\[
\|\varphi(|\mathscr{L}|)\|_{L^{r, \infty}(M)}=\sup_{v>0}\varphi(v)\big[\tau(E_{(0,v)}(|\mathscr{L}|))\big]^{\frac{1}{r}}.
\] 
\end{thm}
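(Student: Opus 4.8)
The plan is to compute the generalized singular-value (decreasing rearrangement) function $\mu_t(\varphi(|\mathscr{L}|))$ of the operator $\varphi(|\mathscr{L}|)$ explicitly, and then evaluate the Lorentz quasi-norm $\|\varphi(|\mathscr{L}|)\|_{L^{r,\infty}(M)} = \sup_{t>0} t^{1/r}\mu_t(\varphi(|\mathscr{L}|))$ by a change of variables. The starting observation is that since $\varphi$ is continuous, monotonically decreasing, with $\varphi(0)=1$ and $\varphi(v)\to 0$ as $v\to\infty$, the function $\varphi$ applied via the Borel functional calculus to the positive self-adjoint operator $|\mathscr{L}|$ (with spectral measure $E_{(\cdot)}(|\mathscr{L}|)$) produces a positive operator affiliated with $M$ whose spectral projections are governed by those of $|\mathscr{L}|$: for $\lambda\in(0,1]$ one has $E_{(\lambda,\infty)}(\varphi(|\mathscr{L}|)) = E_{\{v:\varphi(v)>\lambda\}}(|\mathscr{L}|)$, and because $\varphi$ is decreasing this level set is an interval $[0,v_\lambda)$ (or $[0,v_\lambda]$) where $v_\lambda$ is essentially $\varphi^{-1}(\lambda)$. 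Hence $\tau\big(E_{(\lambda,\infty)}(\varphi(|\mathscr{L}|))\big) = \tau\big(E_{(0,v_\lambda)}(|\mathscr{L}|)\big)$, which is the distribution function of $\varphi(|\mathscr{L}|)$.

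Next I would pass from the distribution function to the singular-value function via the standard identity $\mu_t(a) = \inf\{\lambda>0 : \tau(E_{(\lambda,\infty)}(|a|)) \leqslant t\}$. Substituting the computation above, $\mu_t(\varphi(|\mathscr{L}|)) = \inf\{\lambda>0 : \tau(E_{(0,v_\lambda)}(|\mathscr{L}|))\leqslant t\}$. Taking the supremum that defines the Lorentz norm and performing the substitution $\lambda = \varphi(v)$ (so that $t$ ranges over the values $\tau(E_{(0,v)}(|\mathscr{L}|))$ as $v$ ranges over $(0,\infty)$), the quantity $\sup_{t>0} t^{1/r}\mu_t(\varphi(|\mathscr{L}|))$ transforms into $\sup_{v>0} \varphi(v)\,[\tau(E_{(0,v)}(|\mathscr{L}|))]^{1/r}$, which is exactly the claimed formula. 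One should double back to note the two directions of the inequality separately: monotonicity of $\varphi$ gives the $\leqslant$ direction directly from the level-set description, while the $\geqslant$ direction follows because for each fixed $v$ one can choose $t$ slightly larger than $\tau(E_{(0,v)}(|\mathscr{L}|))$ and use right-continuity of the distribution function together with continuity of $\varphi$.

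The main obstacle I anticipate is the careful handling of the boundary behaviour and of possible plateaus or jumps: the spectral distribution function $v\mapsto \tau(E_{(0,v)}(|\mathscr{L}|))$ may be neither continuous nor strictly increasing (it can have jumps where $|\mathscr{L}|$ has eigenvalues of positive trace, and it can be locally constant where there is a spectral gap), so the change of variables $\lambda=\varphi(v)$ and the inversion of the distribution function must be justified through the infimum/supremum definitions rather than by literally inverting a function. Concretely, one must verify that the supremum over $t>0$ is unaffected by the ``flat'' portions — on an interval where $\tau(E_{(0,v)}(|\mathscr{L}|))$ is constant, $\varphi(v)$ is largest at the left endpoint, so only endpoints contribute — and that at a jump the left-continuous value $\tau(E_{(0,v)}(|\mathscr{L}|))$ (as opposed to $\tau(E_{(0,v]}(|\mathscr{L}|))$) is the correct one to retain, which is precisely why the formula features the open interval $(0,v)$. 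Handling these edge cases rigorously, rather than the change of variables itself, is where the real work lies; the underlying computation is otherwise a routine application of the definition of the noncommutative Lorentz quasi-norm in a semifinite von Neumann algebra.
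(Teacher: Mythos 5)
The paper quotes this result from \cite[Theorem 6.1]{RR2020} without reproducing a proof, and your argument is exactly the standard one behind it: compute the distribution function of $\varphi(|\mathscr{L}|)$ via the spectral mapping $E_{(\gamma,\infty)}(\varphi(|\mathscr{L}|))=E_{\{\varphi>\gamma\}}(|\mathscr{L}|)$, rewrite $\sup_{t>0}t^{1/r}\mu_t$ as $\sup_{\gamma>0}\gamma\,[d_\gamma]^{1/r}$, and then substitute $\gamma=\varphi(v)$, with your discussion of plateaus and jumps being the correct way to justify that substitution. The one detail worth flagging is that the superlevel set $\{\varphi>\gamma\}$ is $[0,v_\gamma)$ rather than $(0,v_\gamma)$, so the stated identity with $E_{(0,v)}$ tacitly requires $\tau(E_{\{0\}}(|\mathscr{L}|))=0$ --- a discrepancy already present in the quoted statement, not introduced by your argument.
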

The latter result was targeted at providing $L^p-L^q$ boundedness estimates for the heat kernel as well as more general propagators of the form $\varphi(\mathscr{L})$. In the particular case of $M=VN_R(G)$ being the right von Neumann algebra of a locally compact unimodular group $G$, the following result is given:

\begin{thm}\label{intro2}
Let $G$ be a locally compact unimodular separable group and let $\mathscr{L}$ be a left invariant on $G.$ Let $\varphi$ be a monotonically decreasing continuous function on $[0,+\infty)$ such that $\varphi(0)=1$ and $\displaystyle\lim_{v\to+\infty}\varphi(v)=0.$ Then we obtain
\[
\|\varphi(|\mathscr{L}|)\|_{L^p(G)\to L^q(G)}\lesssim \sup_{v>0}\varphi(v)\big[\tau(E_{(0,v)}(|\mathscr{L}|))\big]^{\frac{1}{p}-\frac{1}{q}},\quad 1<p\leqslant 2\leqslant q<+\infty. 
\]
\end{thm}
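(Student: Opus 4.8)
The plan is to deduce Theorem~\ref{intro2} by concatenating the two results quoted above: \cite[Theorem 5.1]{RR2020} and Theorem~\ref{intro1}, the latter applied with the semifinite von Neumann algebra $M$ taken to be the right group von Neumann algebra $VN_R(G)$ together with its canonical (Plancherel) trace $\tau$. The point is that $\varphi(|\mathscr{L}|)$ is itself an admissible operator for \cite[Theorem 5.1]{RR2020}, so that the $L^p$--$L^q$ bound reduces to computing its noncommutative Lorentz quasi-norm, which is precisely what Theorem~\ref{intro1} evaluates.

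Concretely, first I would record that, $\mathscr{L}$ being positive, its polar decomposition is trivial and $|\mathscr{L}|=\mathscr{L}$ is a non-negative self-adjoint operator affiliated with $VN_R(G)$ (since it is left-invariant), and $VN_R(G)$ is semifinite because $G$ is unimodular. Since $\varphi$ is monotonically decreasing with $\varphi(0)=1$, one has $0\leqslant\varphi\leqslant 1$, so the Borel functional calculus yields a \emph{bounded} operator $\varphi(|\mathscr{L}|)=\varphi(\mathscr{L})\in VN_R(G)$; being a function of a left-invariant operator it again commutes with left translations, hence is left-invariant, and it is continuous on the Schwartz--Bruhat space $S(G)$. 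Thus \cite[Theorem 5.1]{RR2020}, applied to $\varphi(|\mathscr{L}|)$ in place of $\mathscr{L}$, gives, for $1<p\leqslant 2\leqslant q<+\infty$ with $\tfrac1r=\tfrac1p-\tfrac1q$ (the case $p=q=2$ being trivial, as the right-hand side of the asserted inequality is then $1$),
\[
\|\varphi(|\mathscr{L}|)\|_{L^p(G)\to L^q(G)}\;\lesssim\;\|\varphi(|\mathscr{L}|)\|_{L^{r,\infty}(VN_R(G))}.
\]

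Next I would invoke Theorem~\ref{intro1} with $M=VN_R(G)$, $\tau$ its canonical trace and the same $\varphi$; since $1\leqslant r<+\infty$ it gives
\[
\|\varphi(|\mathscr{L}|)\|_{L^{r,\infty}(VN_R(G))}=\sup_{v>0}\varphi(v)\big[\tau(E_{(0,v)}(|\mathscr{L}|))\big]^{\frac1r}.
\]
Substituting $\tfrac1r=\tfrac1p-\tfrac1q$ and chaining the two displays yields exactly the claimed estimate.

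The only step that is not a purely formal concatenation --- and hence the one I would expect to demand the most care --- is the verification that $\varphi(|\mathscr{L}|)$ meets the hypotheses of \cite[Theorem 5.1]{RR2020}: that a bounded Borel function of a positive left-invariant operator is again left-invariant and continuous on $S(G)$, and that it is affiliated with (indeed lies in) the semifinite von Neumann algebra $VN_R(G)$, so that both the Lorentz quasi-norm $\|\cdot\|_{L^{r,\infty}(VN_R(G))}$ and the spectral scale $v\mapsto\tau(E_{(0,v)}(|\mathscr{L}|))$ are well defined and the two quoted theorems can legitimately be applied to it. Once this compatibility of the functional calculus with the group-invariant structure is in place, no further analysis is required.
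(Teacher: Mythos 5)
Your proposal is correct and follows essentially the same route the paper intends: Theorem \ref{intro2} is presented as the specialization of Theorem \ref{intro1} to $M=VN_R(G)$, chained with the $L^p$--$L^q$ multiplier bound of \cite[Theorem 5.1]{RR2020} (this is \cite[Corollary 6.3]{RR2020}, the same mechanism the paper cites for Corollary \ref{coro-lplq}). Your added care about $\varphi(|\mathscr{L}|)$ being affiliated with $VN_R(G)$ and the triviality of the $p=q=2$ endpoint is sound; note only that the statement does not assume $\mathscr{L}$ positive, so one should work with $|\mathscr{L}|$ directly rather than asserting $|\mathscr{L}|=\mathscr{L}$.
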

Here it is important to mention that the operators $\mathscr{L}$ affiliated with the semifinite von Neumann algebra $VN_R(G)$ are those that are left invatiant on the group $G$ \cite[Remark 2.17]{RR2020}. 
Note that this last result allows us to think about some applications in partial differential equations. The first attempt to this idea was given in \cite[Section 7]{RR2020} for the heat equation. In fact, they consider the following equation:
\begin{equation}\label{intro-heat-e}
\partial_t w+\mathscr{L}w=0,\quad w(0)=w_0,
\end{equation}
where $\mathscr{L}$ is a positive unbounded operator affilated with $VN_R(G).$ For each $t>0,$ the solution (by using functional calculus \cite{BorelFunctional}) is given by $w(t,x)=e^{-t\mathscr{L}}w_0(x).$ This solution satisfies the equation and the initial condition. Hence, the $L^p-L^q$ boundedness of the solution follows as 
\[
\|w(t,\cdot)\|_{L^q(G)}=\|e^{-t\mathscr{L}}w_0\|_{L^q(G)}\lesssim \|e^{-t\mathscr{L}}\|_{L^{r,\infty}(VN_R(G))}\|w_0\|_{L^p(G)}.
\]
Imposing that 
\begin{equation}\label{intro-trace}
\tau\big(E_{(0,s)}(\mathscr{L})\big)\lesssim s^{\lambda},\quad s\to+\infty,\quad\text{for some}\quad \lambda>0,
\end{equation}
and by Theorem \ref{intro2}, we can get the following asymptotic behaviour of the heat propagator 
\[
\|w(t,\cdot)\|_{L^q(G)}\lesssim \|e^{-t\mathscr{L}}\|_{L^{r,\infty}(VN_R(G))}\|w_0\|_{L^p(G)}\lesssim t^{-\lambda\left(\frac{1}{p}-\frac{1}{q}\right)}\|w_0\|_{L^p(G)},\quad t>0.
\]
From the point of view of equations, we can see that Theorem \ref{intro2} is conditioned to the monotonicity and decay of the function $\varphi.$ Thus, e.g., we can not apply these results to oscillatory propagators among many others. In this regard, recently in \cite{SRR}, the authors of this manuscript have proven an important and interesting estimation for general propagators
in the noncommutative Lorentz space (associated to a semifinite von Neumann algebra). Indeed, we proved that the noncommutative Lorentz norm of a propagator of the form $\varphi(|\mathscr{L}|)$ can be estimated if the Borel function $\varphi$ is bounded by a positive monotonically decreasing vanishing at infinity continuous function $\psi$. This result, in particular, but not restricted, gives us the possibility to calculate $L^p-L^q$ $(1<p\leqslant 2\leqslant q<+\infty)$ norm estimates for the solutions of heat, wave and Schrödinger type
equations, here we mean the time-fractional versions of such equations (new at that time in this setting), on a locally compact separable unimodular group $G$ in terms of a non-local integro-differential operator in time and any positive left invariant operator (unbounded
and either with discrete or continuous spectrum) on $G$. In some cases, we can also obtain asymptotic
(in time) estimates for the solutions and even claim the sharpness of this decay. The obtained result was stated as follows:
\begin{thm}\cite[Theorem 1.1]{SRR}
    Let $\mathscr{L}$ be a closed (maybe unbounded) operator affiliated with a semifinite von Neuman algebra $M$. Let $\varphi$ be a Borel measurable function on $[0,+\infty)$. Suppose also that $\psi$ is a monotonically decreasing continuous function on $[0,+\infty)$ such that $\psi(0)=1$, $\displaystyle\lim_{v\to+\infty}\psi(v)=0$ and $|\varphi(v)|\leqslant \psi(v)$ for all $v\in[0,+\infty).$ Then for every $1\leqslant r<+\infty$ we have the inequality 
\[
\|\varphi(|\mathscr{L}|)\|_{L^{r, \infty}(M)}\leqslant \sup_{v>0}\psi(v)\big[\tau(E_{(0,v)}(|\mathscr{L}|))\big]^{\frac{1}{r}}.
\] 
\end{thm}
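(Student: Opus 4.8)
The plan is to reduce the statement about $\varphi(|\mathscr{L}|)$ to the already-available Theorem \ref{intro1}, whose conclusion is an \emph{equality} for the special case where the multiplier function is itself monotonically decreasing and vanishing at infinity. The key structural observation is that the generalized singular value function $\mu_t(\cdot)$ of an operator affiliated with $M$ depends only on the operator's distribution function, and it is monotone with respect to the pointwise (spectral) order: if $|A| \leqslant B$ in the sense that $B - |A| \geqslant 0$, then $\mu_t(A) \leqslant \mu_t(B)$ for every $t > 0$. So first I would record this comparison principle for $\mu$-functions (it is standard in the theory of noncommutative $L^p$/Lorentz spaces, following Fack--Kosaki, and in this setting it can also be seen directly from the definition $\mu_t(A) = \inf\{\|A(1-e)\| : e \text{ projection in } M,\ \tau(e)\leqslant t\}$ together with the fact that a Borel function of $|\mathscr{L}|$ commutes with the spectral projections $E_{(0,v)}(|\mathscr{L}|)$).

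Next I would apply the hypothesis $|\varphi(v)| \leqslant \psi(v)$ through functional calculus: since $\psi$ is monotone decreasing, nonnegative, and $|\varphi| \leqslant \psi$ pointwise on $[0,+\infty)$, applying Borel functional calculus to the self-adjoint operator $|\mathscr{L}|$ gives the operator inequality $|\varphi(|\mathscr{L}|)| \leqslant \psi(|\mathscr{L}|)$ (both sides being positive operators affiliated with $M$, comparable because $\psi - |\varphi| \geqslant 0$ as a Borel function). Combining this with the comparison principle from the previous step yields $\mu_t(\varphi(|\mathscr{L}|)) \leqslant \mu_t(\psi(|\mathscr{L}|))$ for all $t>0$. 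Taking the noncommutative Lorentz $L^{r,\infty}(M)$ quasi-norm, which is defined through $\sup_{t>0} t^{1/r}\mu_t(\cdot)$ and is therefore monotone in $\mu$, we get
\[
\|\varphi(|\mathscr{L}|)\|_{L^{r,\infty}(M)} \leqslant \|\psi(|\mathscr{L}|)\|_{L^{r,\infty}(M)}.
\]

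Finally, the right-hand side is now a propagator whose multiplier $\psi$ satisfies exactly the hypotheses of Theorem \ref{intro1} (monotonically decreasing, continuous, $\psi(0)=1$, $\psi(v)\to 0$), so that theorem applies verbatim and gives the identity
\[
\|\psi(|\mathscr{L}|)\|_{L^{r,\infty}(M)} = \sup_{v>0}\psi(v)\big[\tau(E_{(0,v)}(|\mathscr{L}|))\big]^{1/r}.
\]
Chaining the two displays produces the claimed bound. I expect the only genuinely delicate point to be making precise the passage from the pointwise scalar inequality $|\varphi| \leqslant \psi$ to the monotonicity of the singular value functions when $|\mathscr{L}|$ may be unbounded and only affiliated with (not contained in) $M$: one must verify that $\varphi(|\mathscr{L}|)$ is $\tau$-measurable and that the $\mu$-comparison is legitimate in this affiliated setting. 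This is handled by the standard Fack--Kosaki calculus for $\tau$-measurable operators and by noting that $\psi$ being bounded (indeed $\psi(0)=1$ and $\psi$ decreasing forces $0\leqslant \psi\leqslant 1$) makes $\psi(|\mathscr{L}|)$ a bounded operator in $M$, which in turn forces $\varphi(|\mathscr{L}|)$ to be bounded as well; everything then takes place inside $M$ and the comparison is routine.
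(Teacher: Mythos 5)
Your argument is correct. The paper itself does not reproduce a proof of this statement (it is quoted verbatim from \cite{SRR}, and restated later as Theorem \ref{additional} with $\psi(0)>0$), so there is no in-paper argument to compare against line by line; but your reduction is sound and self-contained given Theorem \ref{intro1}. The three ingredients all check out: (i) since $\psi$ is decreasing with $\psi(0)=1$ we have $0\leqslant\psi\leqslant 1$, hence $|\varphi|\leqslant 1$, so $\varphi(|\mathscr{L}|)$ and $\psi(|\mathscr{L}|)$ are bounded operators commuting with $M^{!}$ and therefore lie in $M$ by the bicommutant theorem — no $\tau$-measurability subtleties remain; (ii) $|\varphi(|\mathscr{L}|)|=|\varphi|(|\mathscr{L}|)\leqslant\psi(|\mathscr{L}|)$ by Borel functional calculus applied to the nonnegative function $\psi-|\varphi|$; (iii) the Fack--Kosaki monotonicity $0\leqslant A\leqslant B\Rightarrow\mu_t(A)\leqslant\mu_t(B)$, together with $\mu_t(X)=\mu_t(|X|)$, gives $\mu_t(\varphi(|\mathscr{L}|))\leqslant\mu_t(\psi(|\mathscr{L}|))$ and hence the $L^{r,\infty}(M)$ comparison, after which Theorem \ref{intro1} applies to $\psi$ exactly as you say. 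The most natural alternative (and likely the route of the cited source) bypasses the operator inequality and works directly with distribution functions: since $\psi$ is decreasing, $\{v:|\varphi(v)|>s\}\subseteq\{v:\psi(v)>s\}$, which is an interval $[0,v_s)$, so $d_s(\varphi(|\mathscr{L}|))=\tau\bigl(E_{\{|\varphi|>s\}}(|\mathscr{L}|)\bigr)\leqslant\tau\bigl(E_{(0,v_s)}(|\mathscr{L}|)\bigr)$, and one then unwinds the definition of $\mu_t$ and the $\sup$. That computation is more elementary (it never invokes operator monotonicity of singular values) but proves the same comparison; your version buys a cleaner modular structure at the cost of quoting Fack--Kosaki. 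Either way the result follows.
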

The above statement implies, in particular, the following result:
\begin{thm}\label{intro-lp-lq}
Let $G$ be a locally compact unimodular separable group and let $\mathscr{L}$ be a left invariant on $G.$ Let $\varphi$ be a Borel measurable function on $[0,+\infty)$. Suppose also that $\psi$ is a monotonically decreasing continuous function on $[0,+\infty)$ such that $\psi(0)=1$, $\displaystyle\lim_{v\to+\infty}\psi(v)=0$ and $|\varphi(v)|\leqslant \psi(v)$ for all $v\in[0,+\infty).$ Then we have 
\[
\|\varphi(|\mathscr{L}|)\|_{L^p(G)\to L^q(G)}\lesssim \sup_{v>0}\psi(v)\big[\tau(E_{(0,v)}(|\mathscr{L}|))\big]^{\frac{1}{p}-\frac{1}{q}},\quad 1<p\leqslant 2\leqslant q<+\infty. 
\]
\end{thm}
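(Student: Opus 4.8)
The plan is to obtain this as a straightforward concatenation of the two results quoted just above, applied in the semifinite von Neumann algebra $M=VN_R(G)$ equipped with its canonical trace $\tau$. The one preliminary observation I would record is that the hypotheses force $T:=\varphi(|\mathscr{L}|)$ to be a \emph{bounded} left-invariant operator: since $|\varphi(v)|\leqslant\psi(v)\leqslant\psi(0)=1$ for all $v\geqslant 0$, the Borel functional calculus gives $\|\varphi(|\mathscr{L}|)\|_{\mathcal B(\mathcal H)}\leqslant 1$; and since $\mathscr{L}$ is left invariant on $G$ it is affiliated with $VN_R(G)$ (see \cite[Remark 2.17]{RR2020}), hence so are $|\mathscr{L}|$ and, by functional calculus, $\varphi(|\mathscr{L}|)$, which being bounded in fact lies in $VN_R(G)$ and is again a left-invariant operator on $G$.

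Next I would apply \cite[Theorem 5.1]{RR2020} to $T=\varphi(|\mathscr{L}|)$ in place of $\mathscr{L}$: as $T$ is left invariant and $L^2(G)$-bounded, it acts continuously on the Schwartz--Bruhat space $S(G)$, so for $1<p\leqslant 2\leqslant q<+\infty$ that theorem yields
\[
\|\varphi(|\mathscr{L}|)\|_{L^p(G)\to L^q(G)}\lesssim \|\varphi(|\mathscr{L}|)\|_{L^{r,\infty}(VN_R(G))},\qquad \tfrac1r=\tfrac1p-\tfrac1q .
\]
Then I would estimate the right-hand side by \cite[Theorem 1.1]{SRR} with $M=VN_R(G)$, whose hypotheses on $\varphi$ and $\psi$ are precisely the ones assumed here, giving
\[
\|\varphi(|\mathscr{L}|)\|_{L^{r,\infty}(VN_R(G))}\leqslant \sup_{v>0}\psi(v)\big[\tau(E_{(0,v)}(|\mathscr{L}|))\big]^{1/r}.
\]
Chaining the two displays and substituting $1/r=1/p-1/q$ delivers the claimed estimate; no further computation is needed.

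The only point that genuinely requires care is the compatibility of the two cited results. One must check that the operator produced by the spectral calculus, $\varphi(|\mathscr{L}|)$, really falls within the class of left-invariant operators to which \cite[Theorem 5.1]{RR2020} applies, and that the noncommutative Lorentz space $L^{r,\infty}(VN_R(G))$ occurring there coincides, trace and all, with the space $L^{r,\infty}(M)$ of \cite[Theorem 1.1]{SRR} for $M=VN_R(G)$. Both are settled by the structural description of $VN_R(G)$ as a semifinite von Neumann algebra recalled in the preliminaries (Section \ref{preli}); once that identification is invoked, the proof is just the two inequalities above.
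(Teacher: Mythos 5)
Your proposal is correct and follows essentially the same route as the paper: the paper obtains this statement (restated as Corollary \ref{coro-lplq}) as an immediate combination of the noncommutative Lorentz-norm bound of \cite[Theorem 1.1]{SRR} (Theorem \ref{additional}) with the $L^p$--$L^q$ multiplier estimate of \cite{RR2020} applied in $M=VN_R(G)$, exactly as you chain the two displays. Your extra remarks on the boundedness and left-invariance of $\varphi(|\mathscr{L}|)$ are sound checks that the paper leaves implicit.
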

The result above gave us the possibility, at the first glance, to treat the following integro-differential equations:
\begin{itemize}
\item $\mathscr{L}$-heat type equations:
\begin{align*}
^{C}\partial_{t}^{\beta}w(s,x)-\mathscr{L}w(t,x)&=0,\quad t>0,\quad x\in G,\quad 0<\beta\leqslant 1, \\
w(t,x)|_{_{_{t=0}}}&=w_0(x), 
\end{align*}
where $\mathscr{L}$ is a positive left invariant operator in $G$ and the non-local operator (in time) $^{C}\partial_{t}^{\beta}w(t,x)$, it is the so-called Djrbashian--Caputo fractional derivative, defined by $^{C}\partial_{t}^{\beta}w(t,x)=\prescript{RL}{0}I^{1-\beta}\partial_t w(t,x),$ and $\prescript{RL}{0}I^{\rho}w(t,x)=\frac1{\Gamma(\rho)}\int_0^t (t-s)^{\rho-1}w(s,x)\,\mathrm{d}s,$ is the Riemann-Liouville fractional integral of order $\rho>0.$ 
\item $\mathscr{L}$-wave type equations:
\begin{align*}
^{C}\partial_{t}^{\beta}w(s,x)-\mathscr{L}w(t,x)&=0,\quad t>0,\quad x\in G,\quad 1<\beta<2, \\
w(t,x)|_{_{_{t=0}}}&=w_0(x), \\
\partial_t w(t,x)|_{_{_{t=0}}}&=w_1(x),
\end{align*}
where $^{C}\partial_{t}^{\beta}w(t,x)=\prescript{RL}{0}I^{2-\beta}\partial_t^{(2)} w(t,x).$
\item $\mathscr{L}$-Schr\"odinger type equations:
\begin{align*}
i\,^{C}\partial_{t}^{\beta}w(s,x)-\mathscr{L}w(t,x)&=0,\quad t>0,\quad x\in G,\quad 0<\beta<1, \\
w(t,x)|_{_{_{t=0}}}&=w_0(x). 
\end{align*}
\end{itemize}
Note that for $\beta=2$ we get the classical partial derivative in time, i.e. $^{C}\partial_{t}^{\beta}w(t,x)=\prescript{RL}{0}I^{0}\partial_t^{(2)} w(t,x)=\partial_t^{(2)} w(t,x)$ since $\prescript{RL}{0}I^{0}$ acts like the identity operator. Also, for $\beta=1$, we obtain the classical partial derivative in time $^{C}\partial_{t}^{\beta}w(t,x)=\partial_t w(t,x).$
 
For all the above mentioned equations, the solutions (respectively their propagators) are closely related with the Mittag-Leffler function
\begin{equation*}
E_{\alpha,\delta}(z)=\sum_{k=0}^{+\infty} \frac{z^k}{\Gamma(\alpha k+\delta)},\quad z,\delta\in\mathbb{C},\quad \text{\rm Re}(\alpha)>0,
\end{equation*}
which is absolutely and locally uniformly convergent for the given parameters (\cite{mittag}). Now, by using the Borel functional calculus, the propagators for heat and wave types above can be represented by 
\[
E_{\beta}(-t^{\beta}\mathscr{L})\,\,(0<\beta<1)\quad\text{and}\quad E_{\beta}(-t^{\beta}\mathscr{L}),\,\, \prescript{RL}{0}I^{1}_t E_{\beta}(-t^{\beta}\mathscr{L})\,\, (1<\beta<2). 
\]
Moreover, the solution for the Schr\"odinger type equation is given by $E_\beta(it^{\beta}\mathscr{L}).$ It is known that $E_\beta(-t^{\beta}x)$ $(t,x>0)$ is completely monotonic for all $0<\beta\leqslant1$ \cite{Pollard}. While, for the range $1<\beta<2$, it can be shown to have some oscillatory behaviour. Also, $\mathscr{L}$ is a positive sectorial operator \cite[Chapter 2]{functionalcalculus}. Thus, an equivalent form to represent the propagator $E_\beta(-t^{\beta}\mathscr{L})$ is given by \cite[Theorem 2.41]{thesis}
	\[
	E_\beta(-t^{\beta}\mathscr{L})=\frac{1}{2\pi i}\int_{H}e^{\gamma t}\gamma^{\beta-1}(\gamma^{\beta}+\mathscr{L})^{-1}d\gamma,\quad t\geqslant0,\quad 0<\beta<1,
	\]
	where $H\subset\rho(-\mathscr{L})$ and $H$ is a suitable Hankel's path.

Below we provide some graphs (Figures from \ref{grafica1} to \ref{grafica4}) of the oscillating behaviour, by varying the values of $\beta$, and also show that these type of functions are always bounded uniformly by $C/(1+x)$ for some positive constant $C$ (see formula \eqref{uniform-estimate}): 

\begin{center}
    \begin{figure}[ht]
        \centering
        \includegraphics[scale=0.69]{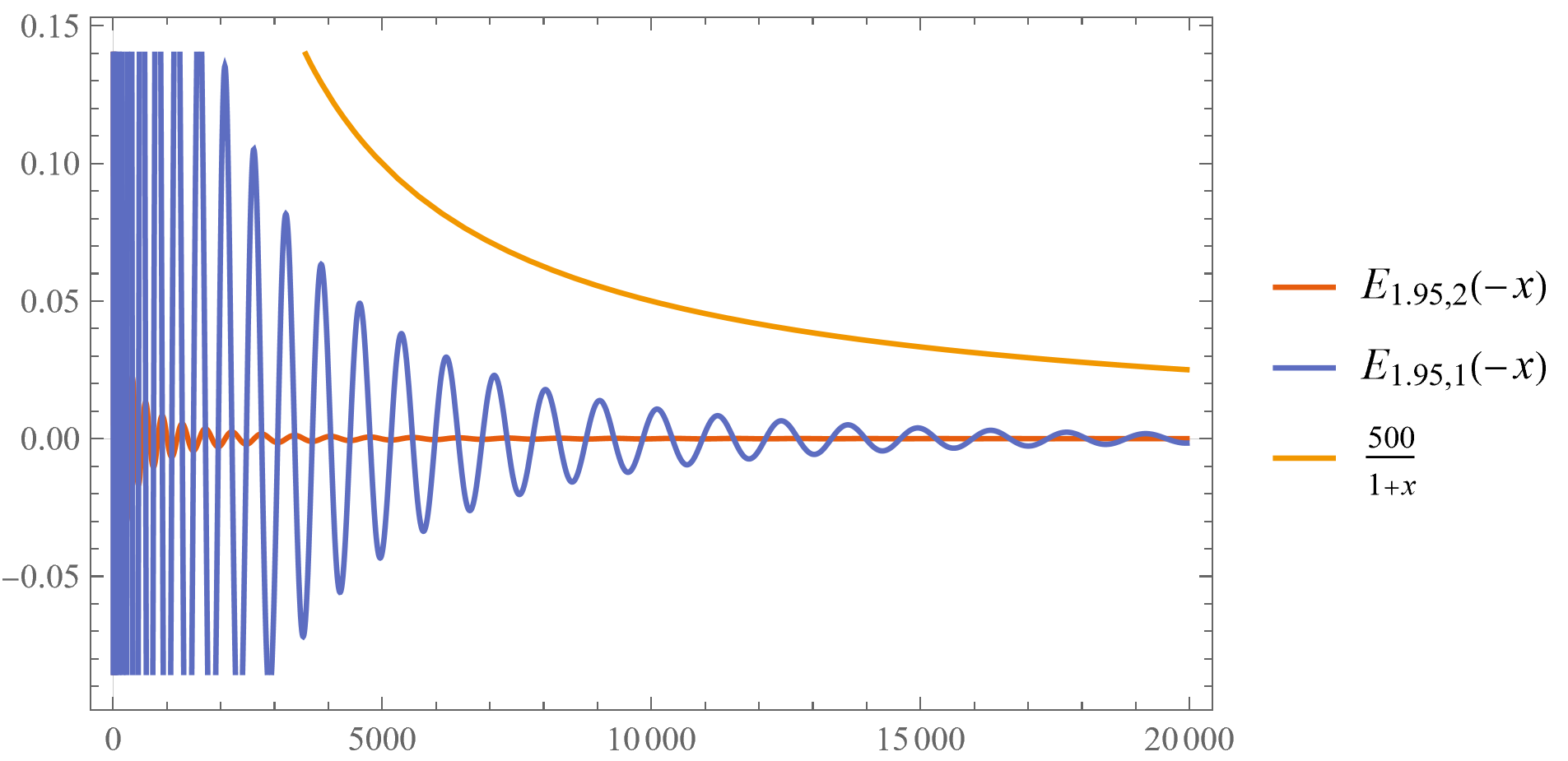}
        \caption{Wave type propagators functions for $\beta=1.95$ bounded uniformly.}
        \label{grafica1}
    \end{figure}
\end{center}
\begin{center}
    \begin{figure}[ht]
        \centering
        \includegraphics[scale=0.69]{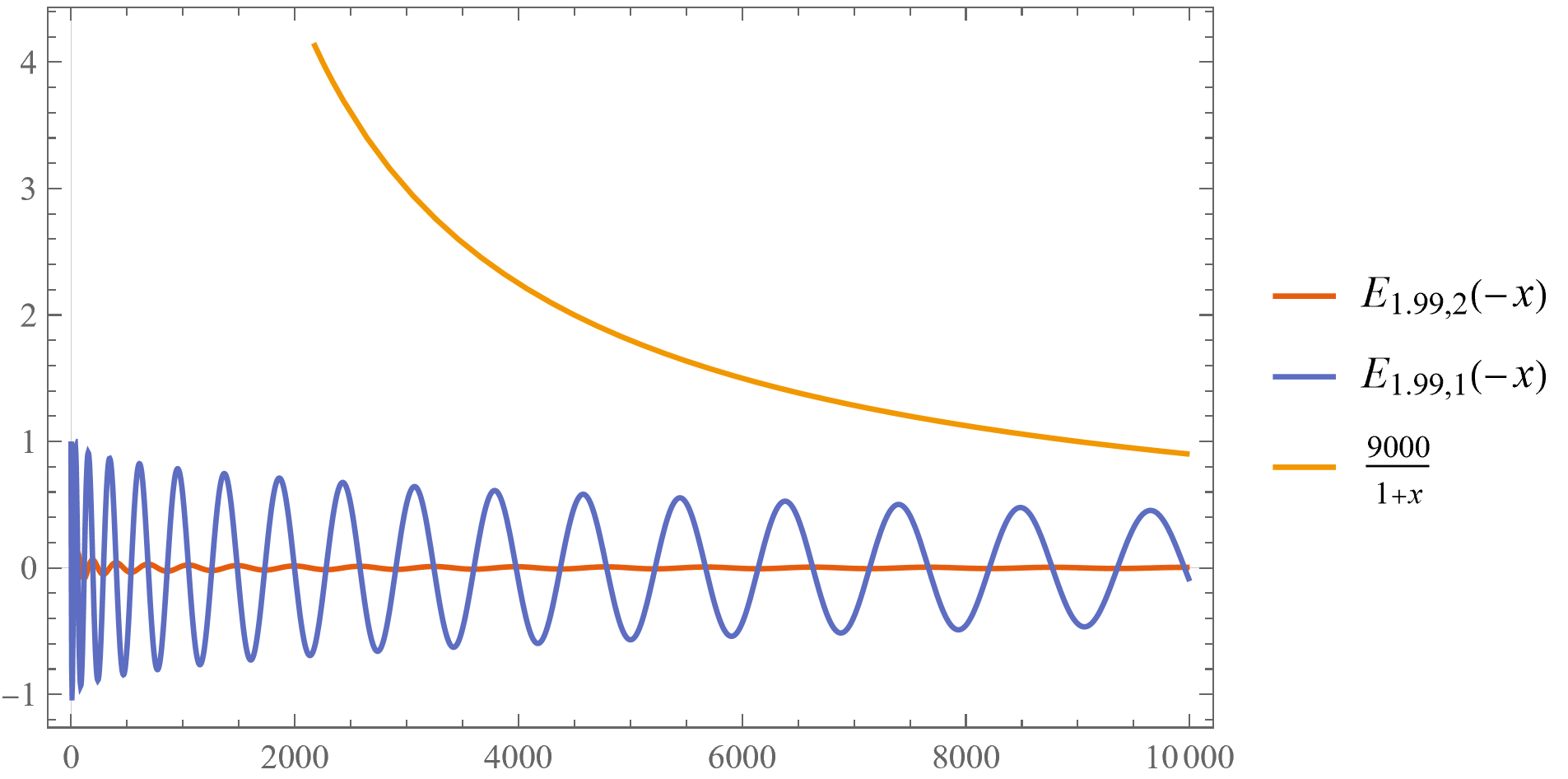}
        \caption{Wave type propagators functions for $\beta=1.99$ bounded uniformly (oscillation much greater than when $\beta=1.95$).}
        \label{grafica2}
    \end{figure}
\end{center}
\begin{center}
    \begin{figure}[ht]
        \centering
        \includegraphics[scale=0.69]{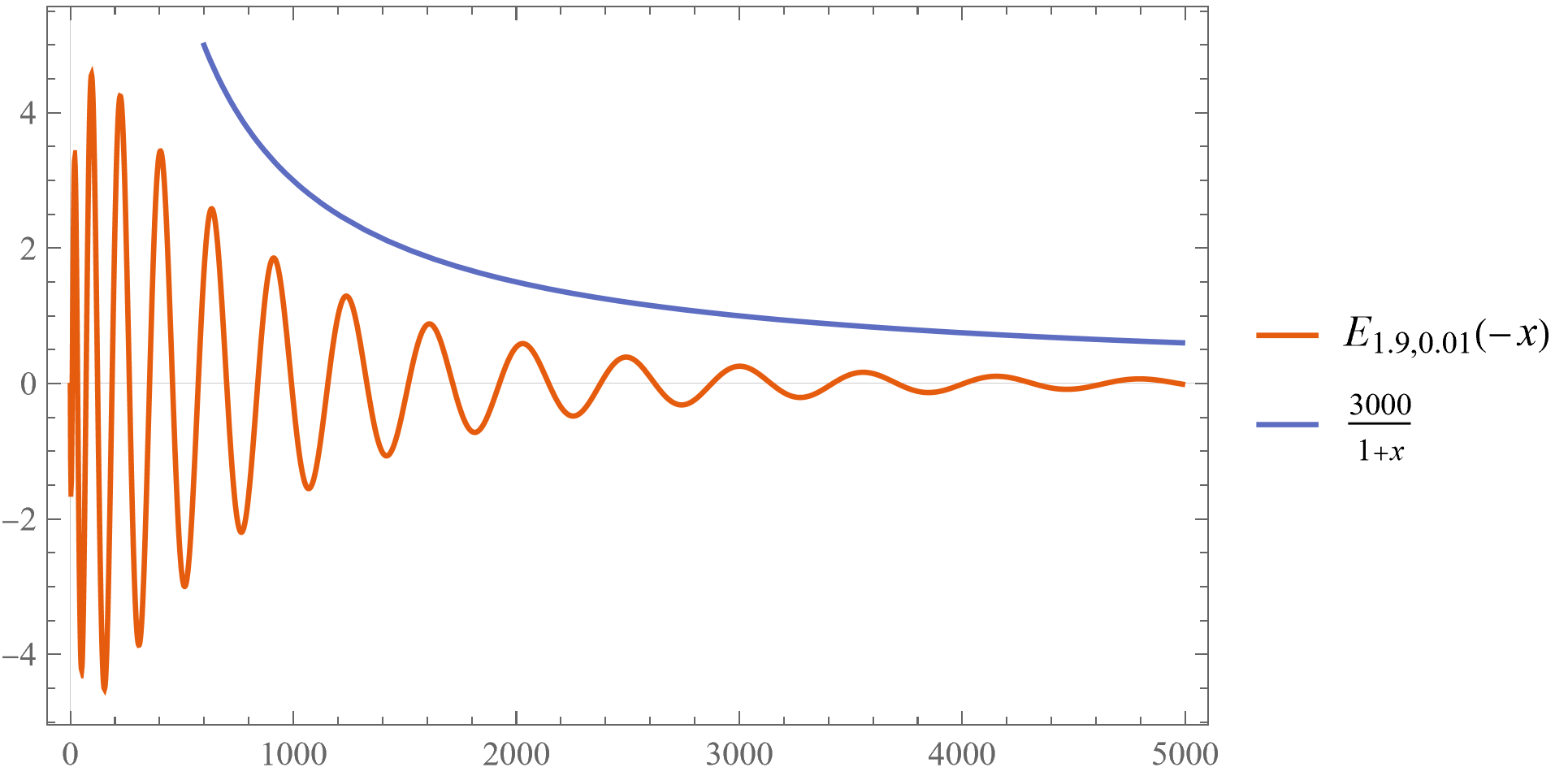}
        \caption{Mittag-Leffler functions with one small parameter.}
        \label{grafica3}
    \end{figure}
\end{center}
\begin{center}
    \begin{figure}[ht]
        \centering
        \includegraphics[scale=0.69]{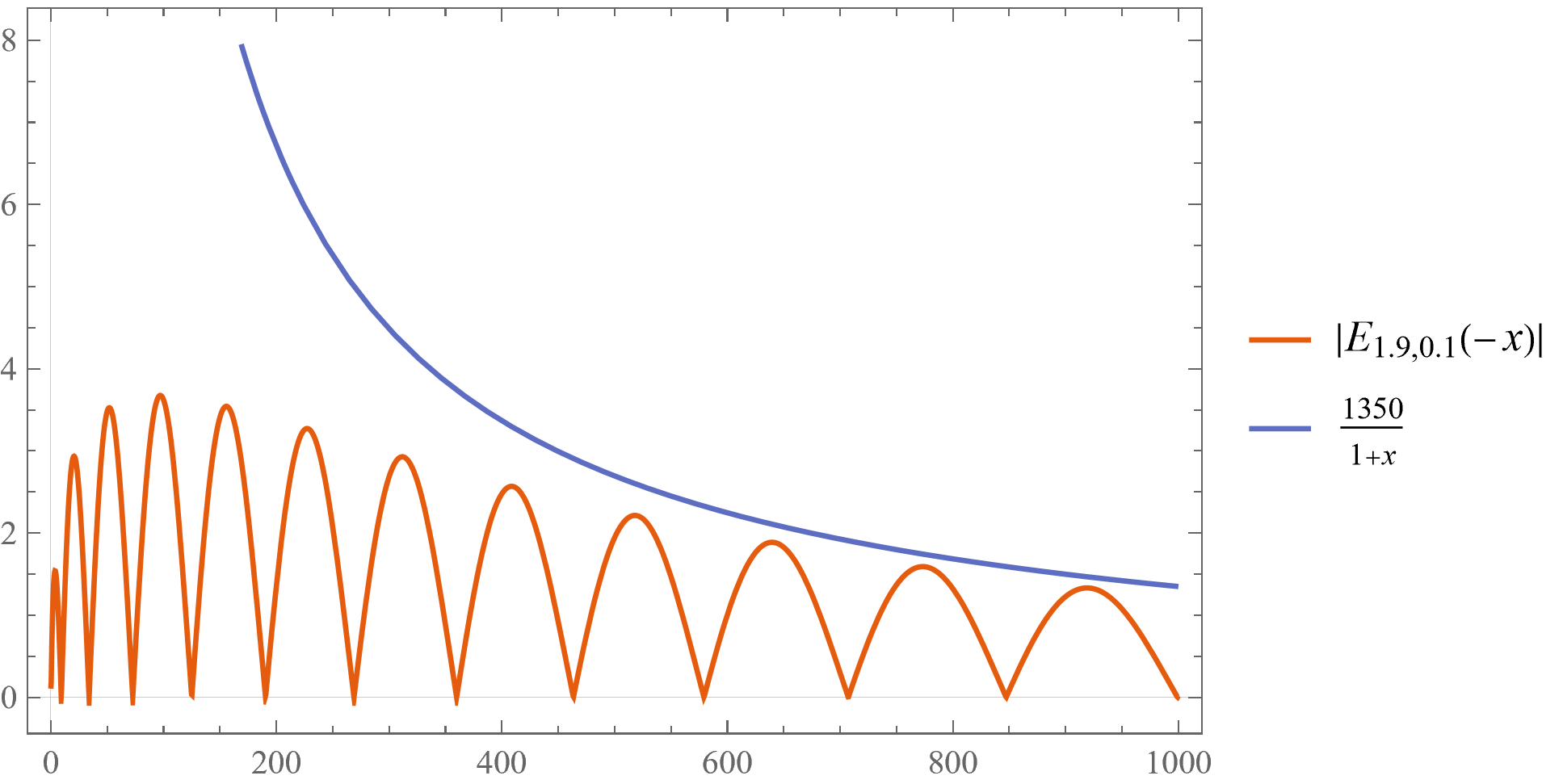}
        \caption{Absolute value of Mittag-Leffler functions remains bounded.}
        \label{grafica4}
    \end{figure}
\end{center}
So, in particular, if condition \eqref{intro-trace} holds, by Theorem \ref{intro-lp-lq}, we get the following time decay rate for the solutions of $\mathscr{L}$-heat type equation and $\mathscr{L}$-Schr\"odinger type equation:  
\[
\|w(t,\cdot)\|_{L^q(G)}\leqslant C_{\beta  ,\lambda,p,q}t^{-\beta\lambda\left(\frac{1}{p}-\frac{1}{q}\right)}\|w_0\|_{L^p(G)},   
\]
and also for the $\mathscr{L}$-wave type equation:
\[
\|w(t,\cdot)\|_{L^q(G)}\leqslant  C_{\beta,\lambda,p,q}t^{-\beta\lambda\left(\frac{1}{p}-\frac{1}{q}\right)} \big(\|w_0\|_{L^p(G)}+t\|w_1\|_{L^p(G)}\big),   
\]
whenever $\frac{1}{\lambda}\geqslant\frac{1}{p}-\frac{1}{q}.$ Note that the order $(\beta)$ of the fractional derivative is appearing in the order of the decay in each case.

Now, in this paper, we have realized that some of the above equations belong to a special class of equations with general kernels called evolutionary integral equations of scalar type. Next, we recall the abstract setting of these type of equations. Thus, we study the following equation
\begin{equation}\label{intro-volterra-e}
w(t)=h(t)+\int_0^t k(t-s)\mathscr{L}w(s){\rm d}s,\quad t\in [0,T],    
\end{equation}
where $\mathscr{L}$ is a closed linear unbounded operator in $X$ (a complex Banach space) with dense domain $\mathcal{D}(\mathscr{L})$, $h\in C([0,T];X)$ and $k\in L_{loc}^1(\mathbb{R}_+)$ is a scalar kernel different from $0$. 

Therefore, in Section \ref{regularity}, we discuss the above Volterra equation. Our space (base) will be to consider the Banach space $X=L^p(G)$ $(1\leqslant p\leqslant+\infty)$ where $G$ is a locally compact group. Moreover, we will consider operators that are positive and left invariant on $G.$

We first state some results for the above general equations. Later, we just focus in a class of kernels called \textit{completely positive} $(\mathcal{PC})$, i.e. $k\in\mathcal{PC}$ if $k\in L_{loc}^1(\mathbb{R}_+)$ is nonnegative and nonincreasing, and there exists a nonnegative kernel $\mathscr{K}\in L_{loc}^1(\mathbb{R}_+)$ such that 
\begin{equation}\label{laplace}
\big(\mathscr{K}\ast k\big)(t):=\int_0^t \mathscr{K}(t-s)k(s){\rm d}s=1,\quad \text{on}\quad (0,+\infty).
\end{equation}
The latter condition is well-known as the Sonine condition \cite{sonine}. These classes were first introduced by Cl\'ement and Nohel in \cite{[50],Clement}.

One of our main results on these type of equations reads as:

\begin{thm}\label{intro-main-integral}
Let $G$ be a locally compact separable unimodular group and let $1<p\leqslant 2\leqslant q<+\infty$. Let $\mathscr{L}$ be any left invariant operator on $G$. Assume that $k\in\mathcal{PC}$ is positive and  
\begin{equation*}
\sup_{t>0}\sup_{v>0}\big[\tau\big(E_{(0,v)}(|\mathscr{L}|)\big)\big]^{\frac{1}{p}-\frac{1}{q}}\frac{1}{1+v\int_0^t k(\tau){\rm d}\tau}<+\infty. 
\end{equation*}
If $w_0\in L^p(G)$ then the solution of the integral equation \eqref{intro-volterra-e} is in $L^q(G)$.

Also, in the particular case that condition \eqref{intro-trace} holds, we obtain the following time decay rate for the solution of equation \eqref{intro-volterra-e} 
\[
\|w(t,\cdot)\|_{L^q(G)}\leqslant C_{\lambda,p,q}\bigg(\int_0^t k(\tau){\rm d}\tau\bigg)^{-\lambda\left(\frac{1}{p}-\frac{1}{q}\right)}\|w_0\|_{L^p(G)},\quad \frac{1}{\lambda}\geqslant\frac{1}{p}-\frac{1}{q}.    
\]
\end{thm}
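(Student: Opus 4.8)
The plan is to reduce the assertion to the $L^p$--$L^q$ boundedness result Theorem~\ref{intro-lp-lq}. The point is that, for $h\equiv w_0\in L^p(G)$, the scalar-type equation \eqref{intro-volterra-e} is solved by $w(t)=S(t)w_0$, where $S(t)$ is obtained from $\mathscr{L}$ by the Borel functional calculus: $S(t)=s(t;\mathscr{L})$, with $s(\cdot\,;\mu)$ ($\mu\geqslant0$) the scalar resolvent attached to the kernel $k$, i.e.\ the solution of
\[
s(t;\mu)+\mu\int_0^t k(t-\tau)\,s(\tau;\mu)\,{\rm d}\tau=1,\qquad t>0.
\]
This is the standard resolvent/subordination construction for scalar Volterra equations with a $\mathcal{PC}$ kernel (Cl\'ement--Nohel, Pr\"uss); since $\mathscr{L}\geqslant0$ we have $\sigma(\mathscr{L})\subset[0,+\infty)$, and for each fixed $t$ the map $v\mapsto s(t;v)$ is continuous and Borel measurable on $[0,+\infty)$ with $s(t;0)=1$, so it is an admissible multiplier for Theorem~\ref{intro-lp-lq}.

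The key estimate I would establish is the pointwise bound $0\leqslant s(t;\mu)\leqslant\big(1+\mu\int_0^t k(\tau)\,{\rm d}\tau\big)^{-1}$ for all $t>0$, $\mu\geqslant0$. Here I would invoke that a completely positive kernel forces $s(\cdot\,;\mu)$ to be nonnegative and nonincreasing in $t$ (the Cl\'ement--Nohel property, which is where the Sonine condition \eqref{laplace} enters). Granting this, for $0\leqslant\tau\leqslant t$ we have $s(\tau;\mu)\geqslant s(t;\mu)$, so, using $k\geqslant0$ and $\int_0^t k(t-\tau)\,{\rm d}\tau=\int_0^t k(\sigma)\,{\rm d}\sigma$,
\[
1=s(t;\mu)+\mu\int_0^t k(t-\tau)\,s(\tau;\mu)\,{\rm d}\tau\ \geqslant\ s(t;\mu)\Big(1+\mu\int_0^t k(\tau)\,{\rm d}\tau\Big),
\]
which rearranges to the claim. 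I expect this step --- pinning down $S(t)=s(t;\mathscr{L})$ and extracting the explicit decaying envelope $(1+v\int_0^t k)^{-1}$ from complete positivity --- to be the main obstacle; the rest is bookkeeping plus an elementary maximization.

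With the envelope in hand I would fix $t>0$, put $\varphi(v)=s(t;v)$ and $\psi(v)=\big(1+v\int_0^t k(\tau)\,{\rm d}\tau\big)^{-1}$, and note that $\int_0^t k(\tau)\,{\rm d}\tau>0$ (because $k$ is nonnegative, nonincreasing and not identically $0$), so $\psi$ is continuous, strictly decreasing on $[0,+\infty)$, $\psi(0)=1$, $\psi(v)\to0$, while $|\varphi(v)|=s(t;v)\leqslant\psi(v)$ by the previous step. Theorem~\ref{intro-lp-lq} then gives
\[
\|S(t)\|_{L^p(G)\to L^q(G)}\ \lesssim\ \sup_{v>0}\frac{\big[\tau\big(E_{(0,v)}(|\mathscr{L}|)\big)\big]^{\frac1p-\frac1q}}{1+v\int_0^t k(\tau)\,{\rm d}\tau},
\]
so that $\|w(t,\cdot)\|_{L^q(G)}=\|S(t)w_0\|_{L^q(G)}\lesssim\big(\sup_{v>0}\cdots\big)\|w_0\|_{L^p(G)}$; the standing hypothesis makes this supremum finite (uniformly in $t$), hence $w(t,\cdot)\in L^q(G)$.

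For the time-decay statement, assume \eqref{intro-trace} and abbreviate $b(t)=\int_0^t k(\tau)\,{\rm d}\tau$ and $\alpha=\lambda\big(\tfrac1p-\tfrac1q\big)$; the hypothesis $\tfrac1\lambda\geqslant\tfrac1p-\tfrac1q$ is exactly $\alpha\leqslant1$. From the previous display together with \eqref{intro-trace} one gets $\|w(t,\cdot)\|_{L^q(G)}\lesssim\|w_0\|_{L^p(G)}\sup_{v>0} v^{\alpha}/(1+b(t)v)$, and a one-variable computation finishes the proof: for $\alpha<1$ the supremum is attained at $v=\alpha/(b(t)(1-\alpha))$ and equals $\alpha^{\alpha}(1-\alpha)^{1-\alpha}b(t)^{-\alpha}$, while for $\alpha=1$ it equals $b(t)^{-1}$, so in all cases $\sup_{v>0} v^{\alpha}/(1+b(t)v)=C_{\alpha}\,b(t)^{-\alpha}$. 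This yields
\[
\|w(t,\cdot)\|_{L^q(G)}\ \leqslant\ C_{\lambda,p,q}\Big(\int_0^t k(\tau)\,{\rm d}\tau\Big)^{-\lambda(1/p-1/q)}\|w_0\|_{L^p(G)},
\]
as claimed. (If the bound in \eqref{intro-trace} is only asymptotic as $v\to+\infty$, the range $0<v\leqslant v_0$ is handled separately by absorbing $\sup_{0<v\leqslant v_0}\tau(E_{(0,v)}(\mathscr{L}))$ into the constant.)
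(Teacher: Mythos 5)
Your proof is correct and follows essentially the same route as the paper: reduce to the $L^p$--$L^q$ multiplier bound, establish the envelope $s(t;v)\leqslant\big(1+v\int_0^t k(\tau)\,{\rm d}\tau\big)^{-1}$ from complete positivity (the paper cites Cl\'ement--Nohel for the monotonicity step you spell out), and then optimize $v^{\alpha}/(1+v\int_0^t k)$ exactly as in the paper, including the separate treatment of the endpoint $\alpha=1$. No substantive differences.
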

We also complement our studies by studying the following $\mathscr{L}$-evolutionary differential equation:
\begin{equation}\label{intro-differential-lcg}
\begin{split}
\partial_t\big(k\ast[w(s,x)-w_0(x)]\big)(t)+\mathscr{L}w(t,x)&=0, \quad t>0,\quad x\in G,\quad k\in \mathcal{PC}, \\
w(t,\cdot)|_{_{_{t=0}}}&=w_0\in L^p(G),
\end{split}
\end{equation}
where $\mathscr{L}$ is a closed linear unbounded operator in $L^p(G)$ with dense domain $\mathcal{D}(\mathscr{L})$. In this case, the idea is to transform the differential equation in an integral one that allows us to use the established result in Theorem \ref{intro-main-integral}. Here we need that the propagator $S(t)$ for the integral equation to be (at least) differentiable, therefore we have to assume additionally that $\mathscr{K}$ is positive and $\mathscr{K}\in BV_{loc}(\mathbb{R}_+)$ (bounded variation functions) \cite[Proposition 1.2]{pruss}. Note that in \cite{pruss}, the author normalized the functions $k$ in latter class by considering $k(0)=0$ and $k(\cdot)$ is left-continuous on $\mathbb{R}_+.$ Nevertheless, in our case, it is not necessary to assume it. Thus, we have that: 
\begin{thm}\label{intro-differential-equation}
Let $(k,\mathscr{K})\in \mathcal{PC}$ be such that $\mathscr{K}$ is positive and $\mathscr{K}\in BV_{loc}(\mathbb{R}_+).$ Let $G$ be a locally compact separable unimodular group and let $1<p\leqslant 2\leqslant q<+\infty$. Let $\mathscr{L}$ be any positive left invariant operator on $G$. Suppose that 
\begin{equation*}
\sup_{t>0}\sup_{v>0}\big[\tau\big(E_{(0,v)}(|\mathscr{L}|)\big)\big]^{\frac{1}{p}-\frac{1}{q}}\frac{1}{1+v\int_0^t \mathscr{K}(\tau){\rm d}\tau}<+\infty. 
\end{equation*}
If $w_0\in L^p(G)$ then the solution of the partial integro-differential equation \eqref{intro-differential-lcg} is in $L^q(G).$

Also, in the particular case that condition \eqref{intro-trace} holds, we get the following time decay rate for the solution of equation \eqref{intro-differential-lcg} 
\[
\|w(t,\cdot)\|_{L^q(G)}\leqslant C_{\lambda,p,q}\bigg(\int_0^t \mathscr{K}(\tau){\rm d}\tau\bigg)^{-\lambda\left(\frac{1}{p}-\frac{1}{q}\right)}\|w_0\|_{L^p(G)},\quad \frac{1}{\lambda}\geqslant\frac{1}{p}-\frac{1}{q}.    
\]
\end{thm}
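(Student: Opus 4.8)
The plan is to reduce the differential equation \eqref{intro-differential-lcg} to an integral equation of scalar type of the form \eqref{intro-volterra-e}, and then invoke Theorem \ref{intro-main-integral}. First I would convolve the equation $\partial_t\big(k\ast[w(\cdot)-w_0]\big)(t)+\mathscr{L}w(t)=0$ with the Sonine kernel $\mathscr{K}$ on the left. Using $\mathscr{K}\ast k = 1$ together with the fact that $\mathscr{K}\in BV_{loc}(\mathbb{R}_+)$ allows one to interchange the convolution with $\partial_t$ (this is where the $BV$ hypothesis enters: it guarantees $\mathscr{K}\ast \partial_t f = \mathscr{K}(0^+) f + d\mathscr{K}\ast f$ is well behaved, cf. \cite[Proposition 1.2]{pruss}), one obtains
\[
w(t,x)-w_0(x)+\int_0^t \mathscr{K}(t-s)\,\mathscr{L}w(s,x)\,{\rm d}s=0,
\]
that is, $w(t)=w_0-(\mathscr{K}\ast \mathscr{L}w)(t)$, which is exactly \eqref{intro-volterra-e} with $h(t)\equiv w_0$, kernel $-\mathscr{K}$, and operator $\mathscr{L}$ — or, absorbing the sign, with kernel $\mathscr{K}$ and operator $-\mathscr{L}$. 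One must check that $\mathscr{K}$ plays the role of the ``completely positive'' kernel in the hypotheses of Theorem \ref{intro-main-integral}: since $(k,\mathscr{K})\in\mathcal{PC}$ one has $\mathscr{K}$ nonnegative, and the pair $(\mathscr{K},k)$ again satisfies the Sonine relation $k\ast\mathscr{K}=1$, so $\mathscr{K}\in\mathcal{PC}$ with associated kernel $k$; positivity of $\mathscr{K}$ is assumed outright.

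Once the equation is in integral form, the propagator $S(t)$ of \eqref{intro-differential-lcg} coincides with the resolvent family for the scalar integral equation with kernel $\mathscr{K}$, and the spectral/functional-calculus representation used in the proof of Theorem \ref{intro-main-integral} applies verbatim: writing $\mathscr{L}=U|\mathscr{L}|$ and diagonalizing, the propagator acts as the Borel function $v\mapsto s_{\mathscr{K}}(t,v)$, where $s_{\mathscr{K}}(t,\cdot)$ solves the scalar Volterra equation $s(t)=1-v\int_0^t\mathscr{K}(t-\sigma)s(\sigma)\,{\rm d}\sigma$. The key scalar fact (already exploited in Theorem \ref{intro-main-integral}) is the two-sided bound
\[
0\leqslant s_{\mathscr{K}}(t,v)\leqslant \frac{1}{1+v\int_0^t \mathscr{K}(\tau)\,{\rm d}\tau},
\]
which holds because $\mathscr{K}\in\mathcal{PC}$ (this is the Clément--Nohel estimate for completely positive kernels, cf. \cite{[50],Clement,pruss}). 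With $\psi_t(v):=\bigl(1+v\int_0^t\mathscr{K}(\tau)\,{\rm d}\tau\bigr)^{-1}$ being monotonically decreasing, continuous, equal to $1$ at $v=0$ and vanishing at infinity, Theorem \ref{intro-lp-lq} gives
\[
\|S(t)w_0\|_{L^q(G)}\lesssim \Bigl(\sup_{v>0}\psi_t(v)\,[\tau(E_{(0,v)}(|\mathscr{L}|))]^{\frac1p-\frac1q}\Bigr)\|w_0\|_{L^p(G)},
\]
and the finiteness hypothesis in the statement is precisely what makes the right-hand side finite, so $w(t,\cdot)=S(t)w_0\in L^q(G)$.

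For the decay rate, I would insert the polynomial trace bound \eqref{intro-trace}, $\tau(E_{(0,v)}(\mathscr{L}))\lesssim v^\lambda$, into the supremum. Setting $a:=\int_0^t\mathscr{K}(\tau)\,{\rm d}\tau$ and $\theta:=\lambda(\frac1p-\frac1q)\leqslant 1$, one must bound $\sup_{v>0} v^{\theta}(1+av)^{-1}$; a one-line calculus argument (the maximum is attained at $v=\theta/(a(1-\theta))$, or $v\to\infty$ when $\theta=1$) gives $\sup_{v>0} v^\theta(1+av)^{-1}=C_\theta\, a^{-\theta}$, hence
\[
\|w(t,\cdot)\|_{L^q(G)}\leqslant C_{\lambda,p,q}\Bigl(\int_0^t\mathscr{K}(\tau)\,{\rm d}\tau\Bigr)^{-\lambda(\frac1p-\frac1q)}\|w_0\|_{L^p(G)}.
\]
I expect the main obstacle to be the first step — rigorously justifying the convolution of the differential equation with $\mathscr{K}$ and the resulting identification of the propagator $S(t)$ with the resolvent family of the integral equation, in particular verifying differentiability of $S(t)$ and the interchange of $\partial_t$ with the convolution under only $\mathscr{K}\in BV_{loc}(\mathbb{R}_+)$ and positivity. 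The functional-calculus/spectral-projection machinery and the scalar estimates are, by contrast, essentially a transcription of the proof of Theorem \ref{intro-main-integral} with $k$ replaced by $\mathscr{K}$, so once the reduction is in place the remainder is routine.
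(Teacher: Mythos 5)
Your proposal is correct and follows essentially the same route as the paper: convolve the equation with the Sonine kernel $\mathscr{K}$ to obtain the integral equation $w = w_0 - \mathscr{K}\ast\mathscr{L}w$, note that $\mathscr{K}\in\mathcal{PC}$ and that the $BV_{loc}$ hypothesis yields differentiability of the resolvent, and then apply the Cl\'ement--Nohel bound $s_{\mathscr{K}}(t;v)\leqslant (1+v\int_0^t\mathscr{K}(\tau)\,{\rm d}\tau)^{-1}$ together with the $L^p$--$L^q$ spectral multiplier estimate and the elementary optimization of $v^{\theta}(1+av)^{-1}$, exactly as in the proof of Theorem \ref{intro-main-integral} with $k$ replaced by $\mathscr{K}$. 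The paper itself omits the details for precisely this reason, so no further comparison is needed.
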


Notice now that many of our asymptotic results are predetermined by condition \eqref{intro-trace}. Hence, a natural question is to know which type of operators can be considered here? In Subsection \ref{type of operators}, we discuss it in detail. In fact, we show that operators like Laplacians, sub-Laplacians, Rockland operators, etc, arisen and used in different groups satisfy such a condition. While, in Subsection \ref{Examples}, we give several examples of different equations like multi-term $\mathscr{L}$-heat type equations, $\mathscr{L}$-Cauchy problem with time variable coefficient, homogeneous Rayleigh–Stokes type problem, etc. 

In Section \ref{strichartz}, we focus on an application of the $L^p-L^q$ results that we have obtained in the other sections. This is, we prove local well-posedness of some nonlinear partial integro-differential equations. The strategy consists on studying first the non-homogeneouos equation for a function $f$ that not necessarily is nonlinear, so  obtaining boundedness on the mixed-type normed spaces $L_t^r L_x^q([0,T] \times G)$, and then utilize these new estimates to perform a Banach fixed-point type argument in an appropriate Banach space $S([0,T]\times G)$. Let us mention that this kind of analysis started in the seminal work of Strichartz \cite{Stri} for the so-called dispersive equations which have the classical Schrödinger-wave equations as key representatives. Moreover, a major contribution on that direction was made by Keel and Tao \cite{KeelTao} where in a very abstract set-up they studied the problem of endpoints. At this moment we can not cover those equations, but we can work on the classical heat equation, and its time-fractional counterpart, as well as for the time-fractional wave equation. On $\mathbb{R}^n$, one can see results on the heat equation in e.g. \cite{bao,EstCauchTimeSpac}. Our results are as follows: 
\begin{itemize}
    \item Nonlinear $\mathscr{L}$-heat equations:
    
\begin{align}\label{NLHeatIntr}
\begin{split}
\partial_{t}w(t,x)+\mathscr{L}w(t,x)&=F(t,w), \quad t>0,\quad x\in G, \\
w(t,x)|_{_{_{t=0}}}&=w_0(x).
\end{split} 
\end{align} 
\begin{thm}
    Let $1<p_0\leqslant 2$. Suppose that the operator $\mathscr{L}$ satisfies the condition \eqref{intro-trace}. Suppose that there exist $0<T<+\infty$ and $1\leqslant\rho\leqslant+\infty$ such that the nonlinearity $F$ satisfies the following estimate:
    \begin{equation*}
        \|F(u) - F(v)\|_{L_t^{\rho} L_x^{p_0}([0,T] \times G)}\leqslant \frac{1}{2C_{\lambda,p_0,\rho,T}} \|u -v\|_{S_{H}^{p_0}([0,T]\times G)},
    \end{equation*}
    for all $u,v\in B_{\varepsilon}:= \{u\in S_{H}^{p_0}([0,T]\times G): \|u\|\leqslant \varepsilon\}$, for some $\varepsilon>0$. 
    Then the problem \eqref{NLHeatIntr} is locally well-posed in $L_x^{p_0}(G)$. 
\end{thm}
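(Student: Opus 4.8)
The plan is to realize the solution of the nonlinear heat equation as a fixed point of the Duhamel map and to close a contraction argument in the space $S_H^{p_0}([0,T]\times G)$, using the linear $L^p$--$L^q$ estimates already established. First I would write the mild formulation: a solution of \eqref{NLHeatIntr} must satisfy
\[
w(t,x) = e^{-t\mathscr{L}}w_0(x) + \int_0^t e^{-(t-s)\mathscr{L}}F(s,w(s,\cdot))(x)\,{\rm d}s =: \Phi(w)(t,x).
\]
The strategy is then to show that $\Phi$ maps the closed ball $B_\varepsilon$ in $S_H^{p_0}([0,T]\times G)$ into itself and is a strict contraction there. The homogeneous part $e^{-t\mathscr{L}}w_0$ lies in the space by the definition of $S_H^{p_0}$ and the linear decay estimate coming from Theorem \ref{intro-lp-lq} together with condition \eqref{intro-trace} (this is exactly the $\mathscr{L}$-heat type estimate $\|e^{-t\mathscr{L}}w_0\|_{L^q}\lesssim t^{-\lambda(1/p_0-1/q)}\|w_0\|_{L^{p_0}}$, integrated appropriately in time to land in the mixed norm). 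So it suffices to control the Duhamel term.

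Second, I would establish the key linear inhomogeneous estimate: for $f\in L_t^{\rho}L_x^{p_0}([0,T]\times G)$,
\[
\Big\|\int_0^t e^{-(t-s)\mathscr{L}}f(s,\cdot)\,{\rm d}s\Big\|_{S_H^{p_0}([0,T]\times G)}\leqslant C_{\lambda,p_0,\rho,T}\,\|f\|_{L_t^{\rho}L_x^{p_0}([0,T]\times G)}.
\]
This is the Strichartz-type estimate that the nonhomogeneous section of the paper is built to provide; I would invoke it directly (it should appear just before this theorem in Section \ref{strichartz}). Its proof amounts to applying the pointwise-in-time operator norm bound $\|e^{-(t-s)\mathscr{L}}\|_{L^{p_0}\to L^q}\lesssim (t-s)^{-\lambda(1/p_0-1/q)}$, then handling the resulting time convolution against a power-law kernel via Hardy--Littlewood--Sobolev (or Young's inequality on the bounded interval $[0,T]$, which is what keeps the constant $T$-dependent), with the exponents $\rho$, $q$, and the $S_H^{p_0}$-norm parameters matched so that the fractional-integration inequality closes. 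The admissibility relation among these exponents is what pins down the precise function space $S_H^{p_0}$.

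Third, combining the two pieces: for $u,v\in B_\varepsilon$,
\[
\|\Phi(u)-\Phi(v)\| = \Big\|\int_0^t e^{-(t-s)\mathscr{L}}\big(F(s,u)-F(s,v)\big)\,{\rm d}s\Big\|\leqslant C_{\lambda,p_0,\rho,T}\,\|F(u)-F(v)\|_{L_t^{\rho}L_x^{p_0}},
\]
and the hypothesis on $F$ gives $\|F(u)-F(v)\|_{L_t^{\rho}L_x^{p_0}}\leqslant \frac{1}{2C_{\lambda,p_0,\rho,T}}\|u-v\|_{S_H^{p_0}}$, so $\|\Phi(u)-\Phi(v)\|\leqslant \tfrac12\|u-v\|$. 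For the self-mapping property I would take $v=0$ (or use $F(s,0)$ as a fixed reference, absorbing $\|F(0)\|$ into the size of the ball), shrinking $\varepsilon$ and if needed $T$ so that $\|\Phi(u)\|\leqslant \|e^{-t\mathscr{L}}w_0\|_{S_H^{p_0}} + \tfrac12\varepsilon + (\text{contribution of }F(0))\leqslant \varepsilon$ once $\|w_0\|_{L^{p_0}}$ is small; this is where local (rather than global) well-posedness enters. The Banach fixed-point theorem then yields a unique solution in $B_\varepsilon$, and continuous dependence on $w_0$ follows from the same contraction estimate. Finally I would note that $w\in S_H^{p_0}$ together with the embedding into $C([0,T];L^{p_0}(G))$ built into that space gives the asserted well-posedness in $L_x^{p_0}(G)$.

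The main obstacle I anticipate is not the contraction bookkeeping, which is routine once the pieces are in place, but verifying the inhomogeneous Strichartz estimate with the correct, self-consistent choice of the mixed-norm space $S_H^{p_0}$: one must choose the time-integrability exponent and the spatial exponent $q$ so that the convolution with $(t-s)^{-\lambda(1/p_0-1/q)}$ is bounded from $L_t^{\rho}$ into the target time-norm, which forces a relation of the form (time gain) $=1-\lambda(1/p_0-1/q)>0$ plus a Young/HLS exponent identity. Ensuring this admissible region is nonempty for every $1<p_0\leqslant 2$ and that condition \eqref{intro-trace} alone suffices (without extra hypotheses on $\mathscr{L}$) is the delicate point; on a bounded time interval one has extra room because one may trade integrability for powers of $T$, which is presumably why the constant is allowed to depend on $T$ and the result is only local.
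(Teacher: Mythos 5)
Your proposal is correct and follows essentially the same route as the paper: the paper also reduces the problem to the homogeneous and Duhamel estimates of Proposition \ref{timeSpaceEstimatesHeat} (proved exactly as you describe, via the pointwise $L^{p_0}\to L^q$ decay and Young's inequality on $[0,T]$, with admissibility of the triple $(r,q,p_0)$ guaranteeing convergence) and then closes the argument by the abstract contraction/iteration scheme of \cite[Proposition 1.38]{tao}, which is precisely the Banach fixed-point argument you carry out by hand, with $\varepsilon=2C_{\lambda,p_0,\rho,T}R$ chosen so that the linear part lands in $B_{\varepsilon/2}$.
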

\item Nonlinear $\mathscr{L}$-heat type equations:
\begin{align}\label{NLHeatTIntr}
\begin{split}
\,^{C}\partial_{t}^{\beta}w(t,x)+\mathscr{L}w(t,x)&=F(t,w), \quad t>0,\quad x\in G,\quad 0<\beta<1, \\
w(t,x)|_{_{_{t=0}}}&=w_0(x).
\end{split}
\end{align}
\begin{thm}
    Let $1<p_0\leqslant 2$. Suppose that the operator $\mathscr{L}$ satisfies the condition \eqref{intro-trace}. Suppose that there exist $0<T<+\infty$ and $\frac{1}{\beta}\leqslant\rho\leqslant+\infty$ such that the nonlinearity $F$ satisfies the following estimate
    \begin{equation*}
        \|F(u) - F(v)\|_{L_t^{\rho} L_x^{p_0}([0,T] \times G)}\leqslant\frac{1}{2C_{\beta,\lambda,p_0,\rho,T}} \|u -v\|_{S_{HT}^{p_0}([0,T]\times G)},
    \end{equation*}
    for all $u,v\in B_{\varepsilon}:= \{u\in S_{HT}^{p_0}([0,T]\times G): \|u\|\leqslant \varepsilon\}$, for some $\varepsilon>0$. 
    Then the problem \eqref{NLHeatTIntr} is locally well-posed in $L_x^{p_0}(G)$. 
\end{thm}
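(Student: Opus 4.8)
The plan is to recast \eqref{NLHeatTIntr} in its mild (Duhamel) form and solve it by a contraction mapping argument in the ball $B_{\varepsilon}\subset S_{HT}^{p_0}([0,T]\times G)$, the linear input being the $L^{p}$--$L^{q}$ bounds of Theorem \ref{intro-lp-lq} combined with the trace condition \eqref{intro-trace}. By the variation-of-parameters formula for the Djrbashian--Caputo derivative of order $\beta\in(0,1)$, a function $w$ solves \eqref{NLHeatTIntr} if and only if
\[
w(t)=\Phi(w)(t):=E_{\beta}(-t^{\beta}\mathscr{L})w_0+\int_0^{t}(t-s)^{\beta-1}E_{\beta,\beta}\big(-(t-s)^{\beta}\mathscr{L}\big)F(s,w(s))\,{\rm d}s ,
\]
so it suffices to produce a fixed point of $\Phi$ in $B_{\varepsilon}$. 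First I would record the homogeneous estimate: since $v\mapsto E_{\beta}(-t^{\beta}v)$ is completely monotone on $[0,+\infty)$ with $E_{\beta}(0)=1$ (\cite{Pollard}), it is itself an admissible majorant in Theorem \ref{intro-lp-lq}; feeding that theorem with \eqref{intro-trace} and the elementary bound $\sup_{v>0}\frac{v^{a}}{1+cv}\simeq c^{-a}$ for $0\le a<1$ yields, for a suitable auxiliary exponent $q\in[2,+\infty)$ (note $p_0\le 2\le q$), the decay estimate $\|E_{\beta}(-t^{\beta}\mathscr{L})w_0\|_{L^{q}(G)}\lesssim t^{-\beta\lambda(\frac1{p_0}-\frac1q)}\|w_0\|_{L^{p_0}(G)}$ displayed just above this statement, hence $E_{\beta}(-t^{\beta}\mathscr{L})w_0\in S_{HT}^{p_0}([0,T]\times G)$ with norm $\lesssim\|w_0\|_{L^{p_0}(G)}$.

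Next I would prove the inhomogeneous, Strichartz-type estimate
\[
\Big\|\int_0^{\cdot}(\cdot-s)^{\beta-1}E_{\beta,\beta}\big(-(\cdot-s)^{\beta}\mathscr{L}\big)f(s)\,{\rm d}s\Big\|_{S_{HT}^{p_0}([0,T]\times G)}\le C_{\beta,\lambda,p_0,\rho,T}\,\|f\|_{L_{t}^{\rho}L_{x}^{p_0}([0,T]\times G)},
\]
whose constant is exactly the $C_{\beta,\lambda,p_0,\rho,T}$ appearing in the hypothesis on $F$. For this one uses the uniform majorization \eqref{uniform-estimate}, $|E_{\beta,\beta}(-t^{\beta}v)|\lesssim (1+t^{\beta}v)^{-1}$, to invoke Theorem \ref{intro-lp-lq} (the normalization $\psi(0)=1$ being immaterial up to a constant) and obtain, for each fixed $t>s$, the operator bound $\|(t-s)^{\beta-1}E_{\beta,\beta}(-(t-s)^{\beta}\mathscr{L})\|_{L^{p_0}(G)\to L^{q}(G)}\lesssim (t-s)^{\beta-1-\beta\lambda(\frac1{p_0}-\frac1q)}$; then Minkowski's integral inequality in the spatial variable followed by taking $L_{t}^{\rho}$-norms reduces matters to a time-convolution with the kernel $(t-s)^{\beta-1}$ (carrying the operator-decay weight), which is controlled by Young's inequality, respectively the Hardy--Littlewood--Sobolev inequality. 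The requirement $\tfrac1\beta\le\rho\le+\infty$ is precisely what keeps this time-convolution inside the admissible range: it is what forces $t\mapsto t^{\beta-1}$ into the appropriate $L^{\rho'}(0,T)$ so that the convolution is absorbed, the endpoint $\rho=1/\beta$ being handled by the cruder bound $\|k\ast g\|_{L^{\rho}_{t}}\le\|k\|_{L^{1}_{t}}\|g\|_{L^{\rho}_{t}}$ with $k(t)=t^{\beta-1}\in L^{1}(0,T)$.

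With the two estimates above, the contraction step is routine. Applying the inhomogeneous estimate to $f=F(\cdot,u)-F(\cdot,v)$ and using the Lipschitz hypothesis over $B_{\varepsilon}$ gives $\|\Phi(u)-\Phi(v)\|_{S_{HT}^{p_0}}\le\tfrac12\|u-v\|_{S_{HT}^{p_0}}$, so $\Phi$ is a $\tfrac12$-contraction on $B_{\varepsilon}$; combined with the homogeneous bound (and, if necessary, after shrinking $\varepsilon$ or $T$, or imposing smallness of $\|w_0\|_{L^{p_0}(G)}$ and of $\|F(\cdot,0)\|_{L^{\rho}_{t}L^{p_0}_{x}}$) one gets $\Phi(B_{\varepsilon})\subset B_{\varepsilon}$. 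The Banach fixed-point theorem then yields a unique $w\in B_{\varepsilon}$ with $w=\Phi(w)$; continuity in time, $w\in C([0,T];L^{p_0}(G))$, follows from the strong continuity of $t\mapsto E_{\beta}(-t^{\beta}\mathscr{L})$ and dominated convergence, and the same Lipschitz bounds show that $w_0\mapsto w$ is locally Lipschitz from $L^{p_0}(G)$ into $C([0,T];L^{p_0}(G))$, which is the asserted local well-posedness in $L_{x}^{p_0}(G)$.

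I expect the genuine difficulty to be the fractional-in-time Strichartz estimate: one must keep track of the combined singularity of the Duhamel kernel $(t-s)^{\beta-1}$ and of the operator-decay factor $(t-s)^{-\beta\lambda(\frac1{p_0}-\frac1q)}$, choose the auxiliary exponent $q$ built into the definition of $S_{HT}^{p_0}$ so that the resulting time exponents land in the Young / Hardy--Littlewood--Sobolev range, and verify that $\rho\ge 1/\beta$ is exactly the threshold at which the estimate closes. Justifying the uniform majorant for the two-parameter Mittag-Leffler function $E_{\beta,\beta}$, which here plays the role that a genuine semigroup would play for $\beta=1$, also requires some care, since the usual semigroup machinery is not available in this non-local setting.
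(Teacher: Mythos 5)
Your proposal is correct and follows essentially the same route as the paper: Duhamel form with the Mittag--Leffler propagators, the homogeneous and inhomogeneous time--space estimates obtained from the bound $|E_{\beta,\beta}(-t^{\beta}v)|\lesssim (1+t^{\beta}v)^{-1}$ together with the trace condition and Young's inequality in time over the $\beta$-$\mathscr{L}$-admissible triples, and then a contraction/abstract iteration argument in the ball $B_{\varepsilon}\subset S_{HT}^{p_0}$ (the paper packages this last step as an application of \cite[Proposition 1.38]{tao}, exactly as in its Theorem for the classical heat case). Your identification of $\rho\geqslant 1/\beta$ as the threshold making the time-convolution with $(t-s)^{\beta-1}$ close is precisely the mechanism in the paper's Proposition on time--space estimates for the fractional heat equation.
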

\item Nonlinear $\mathscr{L}$-wave type equations:
\begin{align}\label{NLwaveIntr}
\begin{split}
\,^{C}\partial_{t}^{\beta}w(t,x)+\mathscr{L}w(t,x)&=F(t,w), \quad t>0,\quad x\in G,\quad 1<\beta<2, \\
w(t,x)|_{_{_{t=0}}}&=w_0(x), \\
\partial_t w(t,x)|_{_{_{t=0}}}&=w_1(x).
\end{split}
\end{align}
\begin{thm}
    Let $1<p_0\leqslant2$. Suppose that the operator $\mathscr{L}$ satisfies the condition \eqref{intro-trace}. Suppose that there exist $0<T<+\infty$ and $1\leqslant\rho\leqslant+\infty$ such that the nonlinearity $F$ satisfies the following estimate
    \begin{equation*}
        \|F(u) - F(v)\|_{L_t^{\rho} L_x^{p_0}([0,T] \times G)}\leqslant\frac{1}{2\Tilde{C}_{\beta,\lambda,p_0,\rho,T}} \|u -v\|_{S_{W}^{p_0}([0,T]\times G)},
    \end{equation*}
    for all $u,v\in B_{\varepsilon}:= \{u\in S_{W}^{p_0}([0,T]\times G): \|u\|\leqslant \varepsilon\}$, for some $\varepsilon>0$. 
    Then the problem \eqref{NLwaveIntr} is locally well-posed in $L_x^{p_0}(G)\times L_x^{p_0}(G)$.
\end{thm}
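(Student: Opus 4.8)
\emph{Proof strategy.} The plan is to recast the Cauchy problem \eqref{NLwaveIntr} as a fixed-point equation through Duhamel's principle and then run a Banach contraction argument in the Strichartz-type space $S_W^{p_0}([0,T]\times G)$ introduced in Section~\ref{strichartz}. By the Borel functional calculus applied to the positive left-invariant operator $\mathscr{L}$, the mild solution of \eqref{NLwaveIntr} is the fixed point of
\[
\Phi(w)(t,x):=E_\beta(-t^\beta\mathscr{L})w_0(x)+\big(\prescript{RL}{0}I^1_t E_\beta(-t^\beta\mathscr{L})w_1\big)(x)+\int_0^t(t-s)^{\beta-1}E_{\beta,\beta}\big(-(t-s)^\beta\mathscr{L}\big)F(s,w(s))\,\mathrm{d}s,
\]
and one checks, via the scalar Volterra theory recalled around equation \eqref{intro-volterra-e}, that any fixed point of $\Phi$ solves \eqref{NLwaveIntr} with the prescribed data $(w_0,w_1)$.

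Next I would invoke the linear estimates established in Section~\ref{strichartz}. By the uniform bound \eqref{uniform-estimate}, each of the three propagator families $E_\beta(-t^\beta v)$, $\prescript{RL}{0}I^1_t E_\beta(-t^\beta v)$ and $(t-s)^{\beta-1}E_{\beta,\beta}(-(t-s)^\beta v)$ is dominated pointwise in the spectral variable by $C/(1+t^\beta v)$ (with an extra factor $t$, resp.\ $(t-s)^{\beta-1}$), so Theorem~\ref{intro-lp-lq} together with condition \eqref{intro-trace} produces, for $1<p_0\leqslant 2\leqslant q<+\infty$, the $L_x^{p_0}\to L_x^{q}$ smoothing bounds with time weights $t^{-\beta\lambda(1/p_0-1/q)}$ for the homogeneous terms and $(t-s)^{\beta-1-\beta\lambda(1/p_0-1/q)}$ inside the Duhamel integral. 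Combining these with Minkowski's inequality in time and a convolution (Young, and in the more singular ranges Hardy--Littlewood--Sobolev) inequality in the $t$ variable yields the key linear estimate
\[
\|\Phi(w)\|_{S_W^{p_0}([0,T]\times G)}\leqslant \Tilde{C}_{\beta,\lambda,p_0,\rho,T}\Big(\|w_0\|_{L_x^{p_0}(G)}+T\|w_1\|_{L_x^{p_0}(G)}+\|F(\cdot,w)\|_{L_t^{\rho} L_x^{p_0}([0,T]\times G)}\Big),
\]
with exactly the constant $\Tilde{C}_{\beta,\lambda,p_0,\rho,T}$ that appears in the hypothesis on $F$.

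The final step is the contraction on $B_\varepsilon\subset S_W^{p_0}([0,T]\times G)$. For $u,v\in B_\varepsilon$ the linear terms cancel in the difference, so the estimate above and the assumed Lipschitz bound on $F$ give
\[
\|\Phi(u)-\Phi(v)\|_{S_W^{p_0}}\leqslant \Tilde{C}_{\beta,\lambda,p_0,\rho,T}\,\|F(\cdot,u)-F(\cdot,v)\|_{L_t^{\rho} L_x^{p_0}}\leqslant \tfrac12\|u-v\|_{S_W^{p_0}},
\]
so $\Phi$ is a $\tfrac12$-contraction; taking the data $(w_0,w_1)$ and $T$ (equivalently, $\varepsilon$) small enough that $\|\Phi(0)\|_{S_W^{p_0}}\leqslant\varepsilon/2$ makes $\Phi$ map $B_\varepsilon$ into itself. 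Banach's fixed point theorem then yields a unique $w\in B_\varepsilon$ with $\Phi(w)=w$, i.e.\ a unique mild solution, and the same contraction inequality gives continuous dependence on $(w_0,w_1)\in L_x^{p_0}(G)\times L_x^{p_0}(G)$; this is the asserted local well-posedness.

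The main obstacle is the Duhamel (inhomogeneous) linear estimate: one must show that $w\mapsto\int_0^t(t-s)^{\beta-1}E_{\beta,\beta}(-(t-s)^\beta\mathscr{L})F(s,w)\,\mathrm{d}s$ is bounded from $L_t^{\rho}L_x^{p_0}$ into the time-mixed component of $S_W^{p_0}$. This combines the pointwise spectral bound $(t-s)^{\beta-1}|E_{\beta,\beta}(-(t-s)^\beta v)|\leqslant C(t-s)^{\beta-1}/(1+(t-s)^\beta v)$ with Theorem~\ref{intro-lp-lq} and \eqref{intro-trace} (producing the time-singular operator norm $(t-s)^{\beta-1-\beta\lambda(1/p_0-1/q)}$), followed by a careful convolution estimate in time; it is precisely here that the admissibility relations linking $\rho$ ($1\leqslant\rho\leqslant+\infty$ in the wave case, where $\beta-1>0$ keeps the Duhamel kernel mild), the time exponent of $S_W^{p_0}$, $\beta$ and $\lambda$ are forced, together with the requirement $\lambda(1/p_0-1/q)<1$ that makes the time weight locally integrable. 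Tracking all the constants to obtain the explicit $\Tilde{C}_{\beta,\lambda,p_0,\rho,T}$ is the technical heart of the argument; once it is in hand, the fixed-point step is routine.
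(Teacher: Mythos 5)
Your proposal is correct and follows essentially the same route as the paper: Duhamel's formula, the mixed-norm (Strichartz-type) linear estimates for the three Mittag--Leffler propagators obtained from the spectral bound \eqref{uniform-estimate} together with Theorem \ref{intro-lp-lq} and condition \eqref{intro-trace} (this is the paper's Proposition \ref{timeSpaceEstimatesHeatFrac-w}, yielding \eqref{nonLinHeatEstT-9}), followed by a contraction argument on $B_\varepsilon$ — which the paper packages as an appeal to the abstract iteration scheme of \cite[Proposition 1.38]{tao}. The only cosmetic difference is that the paper encodes the convergence of the time integrals through the $\beta_w$-$\mathscr{L}$-admissibility conditions (which in particular force $1\leqslant\rho\leqslant r<2$ rather than $\rho\leqslant+\infty$), whereas you leave these exponent constraints implicit.
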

\end{itemize}

\section{Preliminary results}\label{preli}

In this section we collect all the necessary concepts and results about evolutionary integral-differential equations (in abstract settings), and also about von Neumann algebras, that will be used everywhere in this paper. 

\subsection{Notions on evolutionary integral equations}

\subsubsection{$\mathscr{L}$-evolutionary integral equation of scalar type}

We study the following Volterra equation
\begin{equation}\label{volterra-e}
w(t)=h(t)+\int_0^t k(t-s)\mathscr{L}w(s)\,{\rm d}s,\quad t\in [0,T],    
\end{equation}
where $\mathscr{L}$ is a closed linear unbounded operator in $X$ (a complex Banach space) with dense domain $\mathcal{D}(\mathscr{L})$, $h\in C([0,T];X),$ and $k\in L_{loc}^1(\mathbb{R}_+)$ is a scalar kernel different from $0$. 

Let us now recall some notions for the solution of \eqref{volterra-e} (\cite{pruss}). Below we denote by $X_{\mathscr{L}}$ the domain of $\mathscr{L}$ endowed with the graph norm $|\cdot|_{\mathscr{L}}$ of $\mathscr{L}$, i.e. $|v|_{\mathscr{L}}=|v|+|\mathscr{L}v|.$ 

\begin{defn}
Let $w$ be a function in the space $C([0,T];X)$. It is said that $w$ is:
\begin{enumerate}
\item {\it strong solution} of \eqref{volterra-e} on $[0,T]$ if $w\in C([0,T];X_{\mathscr{L}})$ and equation \eqref{volterra-e} holds for all $t\in [0,T]$;
\item {\it mild solution} of \eqref{volterra-e} on $[0,T]$ if $k\ast w\in C([0,T];X_{\mathscr{L}})$ and
\[
w(t)=h(t)+\mathscr{L}\big(k\ast w\big)(t),\quad \text{for all}\quad t\in [0,T];
\]
\item {\it weak solution} of \eqref{volterra-e} on $[0,T]$ if 
\[
\langle w(t),y\rangle =\langle h(t),y\rangle +\langle (k\ast w)(t),\mathscr{L}^*y\rangle,
\]
for all $t\in [0,T]$ and each $y\in \mathcal{D}(\mathscr{L}^*).$
\end{enumerate}
\end{defn}
Notice that each strong solution of \eqref{volterra-e} is a mild solution, and every mild solution is a weak one. Now we discuss the well-posedness of \eqref{volterra-e}, which is an extension of the classical notion of well-posed Cauchy problems.

\begin{defn}
Equation \eqref{volterra-e} is said to be well-posed if the following two conditions are satisfied: 
\begin{enumerate}
    \item For each $v\in\mathcal{D}(\mathscr{L})$ there exists a unique strong solution $w_v$ on $\mathbb{R}_+$ of 
    \[
  w(t)=v+(k\ast \mathscr{L}w)(t),\quad t\geqslant0.  
    \]
    \item For all sequences  $\{v_{m}\}_{m\geqslant1}\subset\mathcal{D}(\mathscr{L})$, such that $v_{m}\to0$ as $m\to+\infty$ in $X$, imply $w_{v_m}(t)\to0$ as $m\to+\infty$ in $X$, uniformly on compact intervals.  
\end{enumerate} 
\end{defn}
Without loss of generality, we frequently denote $w_v(t)$ by $w(t).$ Assume that \eqref{volterra-e} is well-posed, we can then define the solution operator $S(t)$ for \eqref{volterra-e} as follows:
\[
w(t)=S(t)v,\quad v\in\mathcal{D}(\mathscr{L}),\quad t\geqslant 0.
\]
Therefore we can also work with the following characterization as well.
\begin{defn}
A family $\{S(t)\}_{t\geqslant0}\subset \mathcal{B}(X)$ (bounded linear operators in $X$) is called a {\it solution operator (or resolvent)} of equation \eqref{volterra-e} if the following conditions are satisfied:
\begin{enumerate}
    \item $S(t)$ is strongly continuous for $t\geqslant0$ and $S(0)=I;$
    \item $S(t)\mathcal{D}(\mathscr{L})\subset\mathcal{D}(\mathscr{L})$ and $\mathscr{L}S(t)v=S(t)\mathscr{L}v$ for any $v\in\mathcal{D}(\mathscr{L})$, $t\geqslant0;$
    \item The resolvent equation holds
    \[
    S(t)v=v+\int_0^t k(t-s)\mathscr{L}S(s)v\,{\rm d}s,\quad\text{for any}\quad v\in\mathcal{D}(\mathscr{L}),\,\,t\geqslant0.
    \]
\end{enumerate}
\end{defn}
It is known that a well-posed problem \eqref{volterra-e} admits a solution operator $S(t)$, and vice versa \cite[Proposition 1.1]{pruss}. So, we now can use the fact that equation \eqref{volterra-e} is well-posed if and only if it has a resolvent $S(t)$ satisfying 
\begin{equation}\label{resolvent-e}
S(t)v=v+\mathscr{L}\int_0^t k(t-s)S(h)v\,{\rm d}h, \quad\text{for all}\quad v\in X,\quad t\geqslant 0. 
\end{equation}

Through the paper we frequently denote $s(t;\lambda)$ to be the one dimensional resolvent by considering $\lambda>0$ to be the operator $\mathscr{L}$, i.e. $\lambda\equiv \mathscr{L}$ and it satisfies 
\begin{equation}\label{resolvent-e-1}
s(t;\lambda)=1+\lambda\int_0^t k(t-s)s(h;\lambda){\rm d}h,\quad t\geqslant 0.    
\end{equation}

\subsection{Von Neumann algebras} The starting point of these algebras can be found in the well-known papers of von Neumann in \cite{[46],[47]}, where they built the mathematical foundations for studying quantum mechanics. Let $\mathfrak{L}(\mathcal{H})$ be the set of linear operators defined on a Hilbert space $\mathcal{H}$. Notice that the concept of $\tau$-measurability on a von Neumann algebra $M$ and the technique of spectral projections allows us to approximate unbounded operators by bounded ones. 

For our study we just need to set $M$ to be the right group von Neumann algebra $VN_R(G)$ with $G$ being a locally compact separable unimodular group. The latter assumption gives the possibility to use the notion of noncommutative Lorentz spaces on $M$ as given in \cite{[51]}. For more specific details on von Neumann algebras we refer \cite{von1,von,von2}.  

It is important to recall that the group von Neumann algebra $VN_R(G)$ is generated by all the right actions of $G$ on $L^2(G)$ ($\pi_R(g)f(x)=f(xg)$ with $g\in G$), which means that $VN_R(G)=\{\pi_R(G)\}^{!!}_{g\in G}$, where $!!$ is the bicommutant of the self-adjoint subalgebras $\{\pi_R(g)\}_{g\in G}\subset \mathfrak{L}(L^2(G))$. The latter result is a consequence of the fact \cite{von}: $VN_R(G)^{!}=VN_L(G)$ and $VN_L(G)^{!}=VN_R(G)$, where the symbol $!$ represents the commutant. 

Before recalling the noncommutative Lorentz spaces we need the following definitions.  

\begin{defn}
Let $M\subset\mathfrak{L}(\mathcal{H})$ be a semifinite von Neumann algebra acting over the Hilbert space $\mathcal{H}$ with a trace $\tau$. A linear operator $L$ (maybe unbounded) is said to be affiliated with $M$, if it commutes with the elements of the commutant $M^!$ of $M$, which means $LV=VL$ for any $V\in M^!.$
\end{defn}

We highlight that in this paper we will consider operators affiliated with $VN_R(G)$ (see \cite[Def. 2.1]{RR2020}), which are precisely those ones who are left invariant on $G$ \cite[Remark 2.17]{RR2020}.

\begin{rem}
If $L$ is a bounded operator affiliated  with $M$, then $L\in M$ by the double commutant theorem. 
\end{rem}

So, the next definitions are given for this type of operators.   

\begin{defn}
Let $M$ be a von Neumann algebra. A trace of the positive part $M_+=\{L\in M: L^*=L>0\}$ of $M$ is a functional defined on $M_+$, taking non-negative, possibly infinite, real values, with the following properties:
\begin{enumerate}
    \item If $L\in M_+$ and $T\in M_+$ then $\tau(L+T)=\tau(L)+\tau(T)$;
    \item If $L\in M_+$ and $\gamma\in\mathbb{R}^{+}$ then $\tau(\gamma L)=\gamma\tau(L)$ (with $0\cdot+\infty=0$);
    \item If $L\in M_+$ and $U$ is an unitary operator on $M$ then $\tau(ULU^{-1})=\tau(L).$
\end{enumerate}
\end{defn}

A trace $\tau$ is faithful (or exact) if $\tau(L)=0$ $(L\in M_+)$ implies that $L=0$. We have that $\tau$ is finite if $\tau(L)<+\infty$ for all $L\in M_+$. Also, $\tau$ is semifinite if, for each $L\in M_+$, $\tau(L)$ is the supremum of the numbers $\tau(T)$ over those $T\in M_+$ such that $L\leqslant T$ and $\tau(T)<+\infty.$  

\begin{defn}
A closeable operator $L$ (maybe unbounded) is called  $\tau$-measureable if for each $\epsilon>0$ there exists a projection $P$ in $M$ such that $P(\mathcal{H})\subset D(L)$ and $\tau(I-P)\leqslant \epsilon$, where $D(L)$ is the domain of $L$ in $\mathcal{H}$ and $M\subset\mathfrak{L}(\mathcal{H})$. We denote by $S(M)$ the set of all $\tau$-measurable operators.   
\end{defn}

\begin{defn}
Let us take an operator $L\in S(M)$ and let $L = U|L|$ be its polar decomposition. We define the distribution function by $d_\gamma(L):=\tau\big(E_{(\gamma,+\infty)}(|L|)\big)$ for $\gamma\geqslant0$, where $E_{(\gamma,+\infty)}(|L|)$ is the spectral projection of $L$ over the interval $(\gamma,+\infty).$ Also, for any $t>0$, we define the generalized $t$-th singular numbers as 
\[
\mu_t(L):=\inf\{\gamma\geqslant0:\,d_\gamma(L)\leqslant t\}.
\]
\end{defn}
For more details and properties of the distribution function and generalized singular numbers, we recommend to see \cite{pacific}. 

Below we recall the noncommutative Lorentz spaces associated with a semifinite von Neumann algebra $M$, which are a noncommutative extension of the classical Lorentz spaces \cite{[51]}.

\begin{defn}
We denote by $L^{p,q}(M)$ $(1\leqslant p<+\infty,\,1\leqslant q<+\infty)$ the set of all operators $L\in S(M)$ such that
\begin{align*}
\|L\|_{L^{p,q}(M)}=\left(\int_0^{+\infty}\big(t^{1/p}\mu_t(L)\big)^q \frac{{\rm d}t}{t}\right)^{1/q}<+\infty.    
\end{align*}
Notice that the $L^{p}$-spaces on $M$ can be defined by
\[
\|L\|_{L^{p}(M)}:=\|L\|_{L^{p,p}(M)}=\left(\int_0^{+\infty}\mu_t(L)^p{\rm d}t\right)^{1/p}.
\]
In the case $q=+\infty$, $L^{p,\infty}(M)$ is the set of all operators $L\in S(M)$ such that 
\[
\|L\|_{L^{p,\infty}(M)}=\sup_{t>0}t^{1/p}\mu_t(L)<+\infty.
\]
\end{defn}

\section{Regularity of integral (differential) equations of scalar type}\label{regularity} 

In this section, we discuss existence, uniqueness, norm estimates and asymptotic time decay for an evolutionary integral (differential) equation of scalar type on a locally compact group. These norm estimates can be reduced to the calculation of its propagator in the noncommutative Lorentz space norm. Also, we show that the latter norm mainly involves to estimate the trace of the spectral projections of the considered operator. 

Note that in the previous paper \cite[Theorem 1]{SRR}, the authors derived the following useful result:

\begin{thm}\label{additional}
Let $\mathscr{L}$ be a closed (maybe unbounded) operator affiliated with a semifinite von Neuman algebra $M$. Let $\phi$ be a Borel measurable function on $[0,+\infty)$. Suppose also that $\psi$ is a monotonically decreasing continuous function on $[0,+\infty)$ such that $\psi(0)>0$, $\displaystyle\lim_{s\to+\infty}\psi(s)=0$ and $|\phi(s)|\leqslant \psi(s)$ for all $s\in[0,+\infty).$ Then for every $1\leqslant r<+\infty$ we have the inequality 
\[
\|\phi(|\mathscr{L}|)\|_{L^{r, \infty}(M)}\leqslant \sup_{s>0}\psi(s)\big[\tau(E_{(0,s)}(|\mathscr{L}|))\big]^{\frac{1}{r}}.
\]
\end{thm}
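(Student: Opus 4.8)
\emph{Proof proposal.} The plan is to rewrite the $L^{r,\infty}$-norm through the distribution function, transport everything to spectral projections of $|\mathscr{L}|$ by the Borel functional calculus, and then exploit the pointwise bound $|\phi|\leqslant\psi$ together with the monotonicity and continuity of $\psi$. First note that, since $0\leqslant|\phi(s)|\leqslant\psi(s)\leqslant\psi(0)<+\infty$ for all $s\geqslant0$, the operator $\phi(|\mathscr{L}|)$ defined by the Borel functional calculus \cite{BorelFunctional} is bounded (with norm $\leqslant\psi(0)$) and, being a function of an operator affiliated with $M$, belongs to $M\subset S(M)$, so the left-hand side is meaningful. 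The first step is then the elementary identity, valid for any $L\in S(M)$,
\[
\|L\|_{L^{r,\infty}(M)}=\sup_{t>0}t^{1/r}\mu_t(L)=\sup_{\gamma>0}\gamma\,\big[d_\gamma(L)\big]^{1/r},
\]
which follows from the equivalence $d_\gamma(L)>t\Leftrightarrow\mu_t(L)>\gamma$, i.e.\ from $d_\gamma$ and $\mu_t$ being mutually generalized inverses \cite{pacific}. Applying this with $L=\phi(|\mathscr{L}|)$, and noting that $\phi(|\mathscr{L}|)$ is normal with $|\phi(|\mathscr{L}|)|=|\phi|(|\mathscr{L}|)$, the composition rule for the functional calculus yields
\[
d_\gamma\big(\phi(|\mathscr{L}|)\big)=\tau\big(E_{(\gamma,+\infty)}(|\phi|(|\mathscr{L}|))\big)=\tau\big(E_{\{s\geqslant0:\,|\phi(s)|>\gamma\}}(|\mathscr{L}|)\big).
\]

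Next I would use the domination $|\phi|\leqslant\psi$: the inclusion of Borel sets $\{s:|\phi(s)|>\gamma\}\subseteq\{s:\psi(s)>\gamma\}$, together with the monotonicity of the spectral measure and of $\tau$, gives $d_\gamma(\phi(|\mathscr{L}|))\leqslant\tau(E_{\{s:\psi(s)>\gamma\}}(|\mathscr{L}|))$, so it only remains to analyse this level set. Since $\psi$ is continuous, nonincreasing, with $\psi(0)>0$ and $\psi(s)\to0$ as $s\to+\infty$, the set $\{s\geqslant0:\psi(s)>\gamma\}$ is empty for $\gamma\geqslant\psi(0)$ (so that term vanishes), whereas for $0<\gamma<\psi(0)$ it equals $[0,s_\gamma)$ with $s_\gamma:=\sup\{s\geqslant0:\psi(s)>\gamma\}\in(0,+\infty)$, and $\psi(s)>\gamma$ for every $0<s<s_\gamma$. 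By normality of $\tau$, $\tau(E_{(0,s_\gamma)}(|\mathscr{L}|))=\lim_{s\uparrow s_\gamma}\tau(E_{(0,s)}(|\mathscr{L}|))$, hence
\[
\gamma\,\big[d_\gamma(\phi(|\mathscr{L}|))\big]^{1/r}\leqslant\lim_{s\uparrow s_\gamma}\gamma\,\big[\tau(E_{(0,s)}(|\mathscr{L}|))\big]^{1/r}\leqslant\sup_{s>0}\psi(s)\,\big[\tau(E_{(0,s)}(|\mathscr{L}|))\big]^{1/r}.
\]
Taking the supremum over $\gamma>0$ and invoking the identity of the first paragraph gives the asserted inequality. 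Alternatively, the same level-set analysis applied with $\phi$ replaced by $\psi$ shows $\mu_t(\phi(|\mathscr{L}|))\leqslant\mu_t(\psi(|\mathscr{L}|))$ for all $t>0$, so one may instead apply Theorem~\ref{intro1} to $\psi/\psi(0)$ and rescale by homogeneity to identify $\|\psi(|\mathscr{L}|)\|_{L^{r,\infty}(M)}$ with the right-hand side.

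I do not expect a genuine obstacle: the proof is an assembly of standard facts on $\tau$-measurable operators. The points that demand care are the composition rule for the functional calculus at the level of spectral projections — in particular on level sets where $\psi$ is locally constant, and regarding the spectral mass of $|\mathscr{L}|$ at $0$, for which one uses the same convention for $E_{(0,s)}$ as in Theorem~\ref{intro1} — together with the verification of the distribution-function form of the $L^{r,\infty}$-norm recorded in the first paragraph; granting these, everything else is routine.
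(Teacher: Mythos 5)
Your argument is correct; note that the paper itself does not prove this statement but imports it verbatim from \cite[Theorem 1]{SRR}, so there is no in-paper proof to compare against. Your reconstruction — passing to the distribution function via $\sup_t t^{1/r}\mu_t=\sup_\gamma\gamma\,d_\gamma^{1/r}$, using the composition rule $d_\gamma(\phi(|\mathscr{L}|))=\tau(E_{\{|\phi|>\gamma\}}(|\mathscr{L}|))$, and analysing the level sets of $\psi$ — is exactly the expected route, and your ``alternative'' reduction to Theorem~\ref{intro1} applied to $\psi/\psi(0)$ plus homogeneity is essentially how the cited reference derives it. The one point you rightly flag is genuine but inherited from the statement itself: the level set $\{s:\psi(s)>\gamma\}$ is $[0,s_\gamma)$, not $(0,s_\gamma)$, so if $|\mathscr{L}|$ has nontrivial spectral mass at $0$ the bound with $E_{(0,s)}$ requires the same convention (effectively $E_{[0,s)}$, or trivial kernel) already implicit in Theorem~\ref{intro1}; with that understanding the proof is complete.
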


At the first step we just showed some applications of above result in heat-wave-Schr\"odinger type equations (time fractional versions of the classical ones). Nevertheless, we have now figured out that the latter result is more efficient and therefore we are able to apply it to a general evolution integral (differential) equation of scalar type. Hence, we provide the results and examples in the next lines.

As a consequence of Theorem \ref{additional} and \cite[Corollary 6.3]{RR2020} we also have in a  straightforward way the following statement:  
\begin{cor}\label{coro-lplq}
Let $G$ be a locally compact separable unimodular group and let $1<p\leqslant 2\leqslant q<+\infty$. Let $\mathscr{L}$ be any left invariant operator on $G$ (maybe unbounded). Let $\phi$ be a Borel measurable function on $[0,+\infty)$. Suppose also that $\psi$ is a monotonically decreasing continuous function on $[0,+\infty)$ such that $\psi(0)>0$, $\displaystyle\lim_{s\to+\infty}\psi(s)=0$ and $|\phi(s)|\leqslant \psi(s)$ for all $s\in[0,+\infty).$ It follows that 
\[
\|\phi(|\mathscr{L}|)\|_{L^{p}(G)\rightarrow L^q(G)}\lesssim\sup_{s>0}\psi(s)\big[\tau(E_{(0,s)}(|\mathscr{L}|))\big]^{\frac{1}{p}-\frac{1}{q}}.
\]    
\end{cor}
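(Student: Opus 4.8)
The plan is to obtain the estimate by composing the two ingredients already recorded above: the abstract noncommutative Lorentz bound of Theorem \ref{additional} and the $L^p$--$L^q$ transference inequality of \cite[Corollary 6.3]{RR2020} (equivalently, the first display of the quoted \cite[Theorem 5.1]{RR2020}) for left-invariant operators on $G$. Concretely, with $\tfrac1r=\tfrac1p-\tfrac1q$ the chain is
\[
\|\phi(|\mathscr{L}|)\|_{L^{p}(G)\to L^{q}(G)}\ \lesssim\ \|\phi(|\mathscr{L}|)\|_{L^{r,\infty}(VN_R(G))}\ \leqslant\ \sup_{s>0}\psi(s)\big[\tau(E_{(0,s)}(|\mathscr{L}|))\big]^{\frac1r},
\]
and substituting $\tfrac1r=\tfrac1p-\tfrac1q$ in the last term gives exactly the asserted bound.

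First I would check that $\phi(|\mathscr{L}|)$ is a legitimate object for both theorems. Since $\mathscr{L}$ is left invariant on $G$, it is affiliated with the right group von Neumann algebra $M=VN_R(G)$ by \cite[Remark 2.17]{RR2020}; hence so is $|\mathscr{L}|$, and by the Borel functional calculus so is $\phi(|\mathscr{L}|)$ for any Borel measurable $\phi$, because a bounded function of an operator affiliated with $M$ commutes with $M^{!}=VN_L(G)$. The hypothesis $|\phi(s)|\leqslant\psi(s)$ with $\psi$ continuous, monotonically decreasing and vanishing at infinity forces $\phi(|\mathscr{L}|)$ to be bounded in operator norm by $\psi(0)$ and, in particular, $\tau$-measurable, so that its noncommutative Lorentz quasi-norm $\|\phi(|\mathscr{L}|)\|_{L^{r,\infty}(VN_R(G))}$ is well defined and Theorem \ref{additional} applies verbatim with this $M$, this $\mathscr{L}$, this $\phi$ and this majorant $\psi$.

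Then I would invoke the transference inequality from \cite[Corollary 6.3]{RR2020}, which is stated precisely for left-invariant operators on a locally compact separable unimodular group and for the range $1<p\leqslant 2\leqslant q<+\infty$ with $\tfrac1r=\tfrac1p-\tfrac1q$; applying it to $\phi(|\mathscr{L}|)$ and inserting the bound from Theorem \ref{additional} produces the claim. One should note the endpoint $p=q=2$ separately: there $\tfrac1p-\tfrac1q=0$, $r=+\infty$ lies outside the range of Theorem \ref{additional}, but the statement is then trivial since the right-hand side reduces to $\sup_{s>0}\psi(s)=\psi(0)$ while $\|\phi(|\mathscr{L}|)\|_{L^2(G)\to L^2(G)}=\sup_{s>0}\mu_s(\phi(|\mathscr{L}|))\leqslant\sup_{s>0}\psi(s)=\psi(0)$ by the sharp $L^2$ identity in \cite[Theorem 5.1]{RR2020}.

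There is no deep obstacle here; the argument is essentially a bookkeeping composition of two cited results. The only points deserving care are the affiliation/measurability verification in the first step (that the Borel functional calculus keeps us inside the operators affiliated with $VN_R(G)$, and that the majorization by $\psi$ indeed places $\phi(|\mathscr{L}|)$ in $S(M)$) and keeping track of the value of $r$ so that Theorem \ref{additional} is invoked within its stated range $1\leqslant r<+\infty$, with the degenerate case $p=q=2$ disposed of as above.
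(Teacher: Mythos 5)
Your proposal is correct and matches the paper's (unstated but implied) argument exactly: the paper derives Corollary \ref{coro-lplq} precisely by composing Theorem \ref{additional} with the $L^p$--$L^q$ transference inequality of \cite[Corollary 6.3]{RR2020}, with $\tfrac1r=\tfrac1p-\tfrac1q$. Your additional checks on affiliation, $\tau$-measurability, and the $p=q=2$ endpoint are sound and only make explicit what the paper leaves implicit.
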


Usually, for the calculation (also existence) of the supremum in Theorem \ref{additional} (or Corollary \ref{coro-lplq}), it is assumed the following condition holds
\begin{equation}\label{trace-condition}
\tau\big(E_{(0,s)}(|\mathscr{L}|)\big)\lesssim s^{\lambda},\quad s\to+\infty,\quad\text{for some}\quad \lambda>0,
\end{equation}
which will be discussed with more details in Subsection \ref{type of operators}.

\subsection{$\mathscr{L}$-evolutionary integral equation}\label{integral-equation} We consider the following integral  equation of scalar type on $L^p(G)$ $(1\leqslant p\leqslant+\infty$, $G$ is a locally compact separable unimodular group): 
\begin{equation}\label{volterra-e-locally}
w(t,x)=h(t,x)+\int_0^t k(t-s)\mathscr{L}w(s,x){\rm d}s,\quad t\in [0,T],\quad x\in G,    
\end{equation}
where $h\in C([0,T];L^p(G))$, $\mathscr{L}$ is a closed linear unbounded operator in $L^p(G)$ with dense domain $\mathcal{D}(\mathscr{L})$ and $k\in L_{loc}^1(\mathbb{R}_+)$ is a scalar kernel different from $0$.

In this section we normally assume the existence of a resolvent $S(t)$ in $\mathcal{B}(L^p(G))$ (bounded linear operators in $L^p(G)$) for equation \eqref{volterra-e-locally}. Nevertheless, in some examples, it will be discussed the existence of such operator. More details about resolvents can be found in e.g. \cite[Chapter 1, Section 1.2]{pruss}. Note that the existence of a resolvent guarantees the well-posed of \eqref{volterra-e-locally} and vice versa \cite[Proposition 1.1]{pruss}. 

\medskip Below we establish the $L^p(G)-L^q(G)$ $(1<p\leqslant 2\leqslant q<+\infty)$ boundedness for the solution of equation \eqref{volterra-e-locally}.  

\begin{thm}\label{integral-thm-0}
Let $G$ be a locally compact separable unimodular group and $1<p\leqslant 2\leqslant q<+\infty$. Let $\mathscr{L}$ be any left invariant operator on $G$. Suppose that $s(t;\lambda)\leqslant C\psi(t;\lambda)$ for a monotonically decreasing continuous function with respect to the variable $\lambda$ on $(0,+\infty)$ such that $0<\psi(t;0)<+\infty$ and $\displaystyle\lim_{\lambda\to+\infty}\psi(t;\lambda)=0$ uniformly on $t.$ Assume also that 
\begin{equation}\label{need-general-0}
\sup_{t>0}\sup_{v>0}\big[\tau\big(E_{(0,v)}(|\mathscr{L}|)\big)\big]^{\frac{1}{p}-\frac{1}{q}}\psi(t;v)<+\infty. 
\end{equation}
If $w_0\in L^p(G)$ then the solution of the integral equation \eqref{volterra-e-locally} is in $L^q(G)$. 
\end{thm}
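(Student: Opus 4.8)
The plan is to reduce the problem to an application of the functional calculus together with Corollary \ref{coro-lplq}. Since $\mathscr{L}$ is a positive left invariant operator on $G$, it is affiliated with $VN_R(G)$, and by Borel functional calculus the solution of \eqref{volterra-e-locally} with $h(t,x)\equiv w_0(x)$ is given by $w(t,\cdot)=S(t)w_0$, where the resolvent operator $S(t)$ is obtained by substituting $\mathscr{L}$ for the scalar $\lambda$ in the one-dimensional resolvent $s(t;\lambda)$ of \eqref{resolvent-e-1}; that is, $S(t)=s(t;\mathscr{L})$ in the sense of functional calculus. First I would make this identification precise: the map $\lambda\mapsto s(t;\lambda)$ is Borel measurable (for fixed $t$), so $\phi_t(\lambda):=s(t;\lambda)$ is admissible as the function $\phi$ in Corollary \ref{coro-lplq}, and one checks that $s(t;\mathscr{L})$ indeed satisfies the resolvent equation \eqref{resolvent-e} and hence coincides with $S(t)$ by uniqueness of the resolvent.

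Next I would invoke the hypothesis $s(t;\lambda)\leqslant C\psi(t;\lambda)$ to play the role of the domination $|\phi(s)|\leqslant\psi(s)$ in Corollary \ref{coro-lplq}: for each fixed $t>0$, the function $\lambda\mapsto\psi(t;\lambda)$ is monotonically decreasing and continuous on $[0,+\infty)$ with $0<\psi(t;0)<+\infty$ and $\psi(t;\lambda)\to0$ as $\lambda\to+\infty$, so Corollary \ref{coro-lplq} applies with $\phi(\cdot)=s(t;\cdot)$ and $\psi(\cdot)=C\psi(t;\cdot)$, yielding
\[
\|w(t,\cdot)\|_{L^q(G)}=\|s(t;\mathscr{L})w_0\|_{L^q(G)}\lesssim\Big(\sup_{v>0}\psi(t;v)\big[\tau(E_{(0,v)}(|\mathscr{L}|))\big]^{\frac{1}{p}-\frac{1}{q}}\Big)\|w_0\|_{L^p(G)}.
\]
Finally, assumption \eqref{need-general-0} guarantees that the supremum over $v$ is finite, and moreover that it is uniformly bounded in $t$, so the right-hand side is finite; hence $w(t,\cdot)\in L^q(G)$ for every $t>0$, as claimed. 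For general $h\in C([0,T];L^p(G))$ one writes the variation-of-constants (resolvent) representation $w(t)=\frac{d}{dt}\int_0^t S(t-s)h(s)\,{\rm d}s$ (or $w=S\ast \dot h + S(\cdot)h(0)$ after integration by parts) and applies the same estimate to each resolvent factor, using continuity of $h$ in $t$.

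The main obstacle I anticipate is the rigorous justification that the functional-calculus operator $s(t;\mathscr{L})$ is well-defined, bounded, and equals the resolvent $S(t)$: this requires knowing that $\lambda\mapsto s(t;\lambda)$ is genuinely Borel (and locally bounded on the spectrum of $\mathscr{L}$), which in turn rests on the regularity theory of scalar Volterra equations — here the hypothesis $s(t;\lambda)\leqslant C\psi(t;\lambda)$ with $\psi(t;\cdot)$ bounded is exactly what supplies the needed uniform bound so that $s(t;\mathscr{L})\in\mathcal{B}(L^2(G))$, and then $\mathcal{B}(L^p(G)\to L^q(G))$ via the noncommutative Lorentz space estimate. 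A secondary technical point is interchanging the functional calculus with the integral in the resolvent equation \eqref{resolvent-e-1}, which is handled by dominated convergence once the uniform bound is in hand. The decay statements and the case of nontrivial $h$ are then routine given Corollary \ref{coro-lplq} and condition \eqref{trace-condition}.
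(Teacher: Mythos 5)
Your proposal is correct and follows essentially the same route as the paper: the paper likewise writes $w(t,\cdot)=S(t)w_0$, bounds $\|S(t)\|_{L^{r,\infty}(VN_R(G))}$ via Theorem \ref{additional} using the domination $s(t;\lambda)\leqslant C\psi(t;\lambda)$, and concludes with condition \eqref{need-general-0}. Your extra care about identifying $S(t)$ with $s(t;\mathscr{L})$ and the remark on general $h$ go slightly beyond what the paper records, but do not change the argument.
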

\begin{proof}
    In fact, by \cite[Theorem 5.1]{RR2020}, we have that
\begin{equation}\label{volterra-sol}
\|w(t,\cdot)\|_{L^q(G)}=\|S(t)w_0\|_{L^q(G)}\lesssim \|S(t)\|_{L^{r,\infty}(VN_R(G))}\|w_0\|_{L^p(G)},
\end{equation}
where $\frac{1}{r}=\frac{1}{p}
-\frac{1}{q}$. Since $s(t;\lambda)\leqslant C\psi(\lambda;t)$, Theorem \ref{additional} and condition \eqref{need-general-0} imply that
\[
\|w(t,\cdot)\|_{L^q(G)}\leqslant C\|w_0\|_{L^p(G)}\sup_{v>0}\big[\tau(E_{(0,v)}(|\mathscr{L}|))\big]^{\frac{1}{r}}\psi(t;v)\leqslant C\|w_0\|_{L^p(G)}<+\infty,
\]
completing the proof.
\end{proof}
Now we assume some restriction over the kernel $k$ to avoid putting the strong condition over the one-dimensional propagator, i.e. $s(t;\lambda)\leqslant C\psi(t;\lambda).$ Moreover, with these classes of kernels we can provide time-asymptotic behavior of equation \eqref{volterra-e-locally} which will depend on the same kernel.

\medskip The kernel $k\in\mathcal{PC}$ (\textit{completely positive}) if $k\in L_{loc}^1(\mathbb{R}_+)$ is nonnegative and nonincreasing, and there exists a nonnegative kernel $\mathscr{K}\in L_{loc}^1(\mathbb{R}_+)$ such that $\big(\mathscr{K}\ast k\big)(t)=1$ on $(0,+\infty),$ where $\ast$ is the Laplace convolution in \eqref{laplace}. The latter equality is so-called the Sonine condition \cite{sonine}. These classes were first introduced by Cl\'ement and Nohel in \cite{[50],Clement}. These kernels have been very useful in several fields. For instance, just to mention a few of them, in potential kernels \cite{[21]}, $p$-standard functions \cite{[198]}, non-local difussion equations \cite{uno2,Vergara1}, and the references therein. Different equivalent assertions can be given for this class, see e.g. \cite[Proposition 4.5]{pruss}, where Bernstein functions can be also used to describe them.  

\begin{thm}\label{integral-thm}
Let $G$ be a locally compact separable unimodular group and $1<p\leqslant 2\leqslant q<+\infty$. Let $\mathscr{L}$ be any left invariant operator on $G$. Assume that $k\in\mathcal{PC}$ is positive and  
\begin{equation}\label{need-general}
\sup_{t>0}\sup_{v>0}\big[\tau\big(E_{(0,v)}(|\mathscr{L}|)\big)\big]^{\frac{1}{p}-\frac{1}{q}}\frac{1}{1+v\int_0^t k(\tau){\rm d}\tau}<+\infty. 
\end{equation}
If $w_0\in L^p(G)$ then the solution of the integral equation \eqref{volterra-e-locally} is in $L^q(G)$.

Also, in the particular case that condition \eqref{trace-condition} holds, we get the following time decay rate for the solution of equation \eqref{volterra-e-locally} 
\[
\|w(t,\cdot)\|_{L^q(G)}\leqslant C_{\lambda,p,q}\bigg(\int_0^t k(\tau){\rm d}\tau\bigg)^{-\lambda\left(\frac{1}{p}-\frac{1}{q}\right)}\|w_0\|_{L^p(G)},\quad \frac{1}{\lambda}\geqslant\frac{1}{p}-\frac{1}{q}.    
\]
\end{thm}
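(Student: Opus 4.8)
The plan is to reduce the statement to Theorem~\ref{integral-thm-0} by exhibiting, for any kernel $k\in\mathcal{PC}$, the explicit scalar majorant
\[
\psi(t;v):=\frac{1}{1+v\int_0^t k(\tau)\,{\rm d}\tau}
\]
for the one-dimensional resolvent $s(t;\cdot)$. The key input is the Cl\'ement--Nohel theory of completely positive kernels (\cite{[50],Clement}, see also \cite[Ch.~I, \S4]{pruss}): since $k\in\mathcal{PC}$, for every $v\geqslant0$ the scalar resolvent $s(\cdot;v)$ attached (through \eqref{resolvent-e-1}) to the positive operator $|\mathscr{L}|$ is nonnegative and nonincreasing in $t$, and the resolvent $S(t)\in\mathcal{B}(L^p(G))$ of \eqref{volterra-e-locally} exists. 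Granting this, I would use the monotonicity of $s(\cdot;v)$ inside the resolvent identity,
\[
1=s(t;v)+v\int_0^t k(t-\tau)s(\tau;v)\,{\rm d}\tau\geqslant s(t;v)\Big(1+v\int_0^t k(\sigma)\,{\rm d}\sigma\Big),
\]
which gives at once $0\leqslant s(t;v)\leqslant\psi(t;v)$. For each fixed $t>0$ the function $v\mapsto\psi(t;v)$ is continuous and strictly decreasing on $[0,+\infty)$ with $\psi(t;0)=1$ and $\psi(t;v)\to0$ as $v\to+\infty$ (here positivity of $k$ is used to ensure $\int_0^t k>0$). Since $S(t)$ is obtained from $s(t;\cdot)$ by the Borel functional calculus of $|\mathscr{L}|$, so that $|S(t)|$ corresponds to $v\mapsto|s(t;v)|\leqslant\psi(t;v)$, Corollary~\ref{coro-lplq} applies for each fixed $t$ with $\phi=s(t;\cdot)$, and hypothesis \eqref{need-general} is precisely \eqref{need-general-0} for this $\psi$. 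Combining with the estimate \eqref{volterra-sol} yields $w(t,\cdot)=S(t)w_0\in L^q(G)$ together with
\[
\|w(t,\cdot)\|_{L^q(G)}\lesssim\|w_0\|_{L^p(G)}\,\sup_{v>0}\frac{\big[\tau\big(E_{(0,v)}(|\mathscr{L}|)\big)\big]^{\frac1p-\frac1q}}{1+v\int_0^t k(\tau)\,{\rm d}\tau}<+\infty.
\]

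For the decay rate I would then impose \eqref{trace-condition}, so that $\big[\tau\big(E_{(0,v)}(|\mathscr{L}|)\big)\big]^{\frac1p-\frac1q}\lesssim v^{\lambda(\frac1p-\frac1q)}$. Writing $a:=\lambda(\frac1p-\frac1q)$ and $K(t):=\int_0^t k(\tau)\,{\rm d}\tau>0$, it remains to estimate $\sup_{v>0}v^{a}/(1+vK(t))$; the substitution $u=vK(t)$ turns it into $K(t)^{-a}\sup_{u>0}u^{a}/(1+u)$, and the hypothesis $\frac1\lambda\geqslant\frac1p-\frac1q$ is exactly $a\leqslant1$, which makes $C_a:=\sup_{u>0}u^{a}/(1+u)$ finite. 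This produces the asserted bound $\|w(t,\cdot)\|_{L^q(G)}\leqslant C_{\lambda,p,q}\big(\int_0^t k(\tau)\,{\rm d}\tau\big)^{-\lambda(\frac1p-\frac1q)}\|w_0\|_{L^p(G)}$.

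The step I expect to be the main obstacle is the scalar-kernel estimate itself: one must invoke (or reprove) the nonnegativity and monotonicity of $t\mapsto s(t;v)$ for completely positive $k$, and make sure the operator solution $S(t)$ is genuinely $s(t;\cdot)$ evaluated at $|\mathscr{L}|$ through the Borel functional calculus — equivalently that the resolvent of \eqref{volterra-e-locally} exists in $\mathcal{B}(L^p(G))$, which for $k\in\mathcal{PC}$ and $\mathscr{L}$ positive is again part of the completely-positive-kernel theory. Once the pointwise bound $|s(t;\cdot)|\leqslant\psi(t;\cdot)$ is in hand, everything else is the routine optimization above together with Corollary~\ref{coro-lplq} and \eqref{volterra-sol}.
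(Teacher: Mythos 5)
Your proposal is correct and follows essentially the same route as the paper: reduce to Theorem~\ref{integral-thm-0} by deriving the scalar bound $s(t;v)\leqslant\big(1+v\int_0^t k(\tau)\,{\rm d}\tau\big)^{-1}$ from the resolvent identity \eqref{resolvent-e-1} together with the Cl\'ement--Nohel nonnegativity/monotonicity of $s(\cdot;v)$ for $k\in\mathcal{PC}$, then optimize $\sup_{v>0}v^{\lambda(\frac1p-\frac1q)}/(1+vK(t))$ under \eqref{trace-condition}. The only cosmetic difference is that you handle the final supremum by the substitution $u=vK(t)$ rather than locating the maximizer explicitly as the paper does; both yield the same constant structure.
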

\begin{proof}
The first part of the proof follows by Theorem \ref{integral-thm-0}. Indeed, it is enough to show that $s(t;\lambda)\leqslant C\big(1+\lambda(1\ast k)(t)\big)^{-1}.$ From equality \eqref{resolvent-e-1} we know that
\[
1=s(t;\lambda)+\lambda\int_0^t k(t-\tau)s(\tau;\lambda){\rm d}\tau,\quad t\geqslant 0. 
\]
By \cite[Proposition 2.1]{Clement} (see also \cite[Proposition 4.5, item (v)]{pruss}) we obtain that
\begin{align*}
    1\geqslant s(t;\lambda)+\lambda s(t;\lambda)\int_0^t k(t-\tau){\rm d}\tau \Longrightarrow s(t;\lambda)\leqslant \frac{1}{1+\lambda\int_0^t k(\tau){\rm d}\tau}.
\end{align*}
This proves our affirmation. Also, by the latter inequality, \eqref{volterra-sol}, \eqref{trace-condition} and Theorem \ref{additional} one obtains
\[
\|w(t,\cdot)\|_{L^q(G)}\leqslant C\|w_0\|_{L^p(G)}\sup_{v>0}\frac{v^{\frac{\lambda}{r}}}{1+v\int_0^t k(\tau){\rm d}\tau}.
\]
Notice first that for $\lambda/r=1$, the supremum is bounded by $\big((k\ast 1)(t)\big)^{-1}$. On the other hand, the above supremum is attained at $v=\frac{\lambda}{(r-\lambda)\int_0^t k(\tau){\rm d}\tau}$ whenever $\frac{1}{\lambda}>\frac{1}{p}-\frac{1}{q}$. Thus
\[
\|w(t,\cdot)\|_{L^q(G)}\leqslant  C_{\lambda,p,q}\bigg(\int_0^t k(\tau){\rm d}\tau\bigg)^{-\lambda/r}\|w_0\|_{L^p(G)}, 
\]
completing the proof.
\end{proof}

Let us now discuss a specific example of the above general results. Here we show the existence of the solution operator in the $L^p$ space. We consider the following integral equation over the space $L^p(\mathbb{G})$ $(1\leqslant p<+\infty$, $\mathbb{G}$ is a graded Lie group): 
\begin{equation}\label{integral-equivalent}
w(t,x)+\int_0^t l(s-t)\mathcal{R}w(s,x){\rm d}s=w_0(x),\quad t>0,\quad x\in\mathbb{G}, 
\end{equation}
where $w_0\in C(\mathbb{G})\cap L^p(\mathbb{G}),$ $l$ is a scalar kernel $\neq0$ in $L^1_{loc}(\mathbb{R}_+)$ and $\mathcal{R}$ is a positive (unbounded) Rockland operator  of homogeneous order $\nu$ on a graded group $\mathbb{G}$. 

Firstly, note that the Rockland operator $\mathcal{R}:\mathcal{D}(\mathbb{G})\subset L^p(\mathbb{G})\to L^p(\mathbb{G}),$ is densely defined in $L^p(\mathbb{G})$ whose domain $\mathcal{D}(\mathbb{G})$ is the space of smooth functions compactly supported in $\mathbb{G},$ see e.g. \cite[Subsection 4.3.1]{FR16} for more details and concretely \cite[Theorem 4.3.3]{FR16}.

By using \cite[Theorem 4.2]{pruss} and \cite[Corollary 4.2.9]{FR16}, it follows that the integral equation \eqref{integral-equivalent} admits a resolvent in $L^p(\mathbb{G})$ (exponentially bounded) whenever the initial condition $w_0$ is continuous in $L^p(\mathbb{G})$. Hence, we have that equation \eqref{integral-equivalent} is well-posed \cite[Proposition 1.1]{pruss}. 

Now we recall that \cite[Theorem 8.2]{david}: 
\[
\tau\big(E_{(0,v)}(\mathcal{R})\big)\lesssim v^{Q/\nu},\quad v\to+\infty.
\]

Therefore, as a consequence of Theorem \ref{integral-thm}, we can establish the following result.

\begin{cor}
Let $l\in \mathcal{PC}$ such that $l$ is positive and $l\in BV_{loc}(\mathbb{R}_+).$ Let $\mathbb{G}$ be a graded Lie group of homogeneous dimension $Q$. Let $\mathcal{R}$ be a positive Rockland operator of homogeneous degree $\nu$ on $\mathbb{G}$. If $w_0$ is a continuous function in $L^p(\mathbb{G})$ $(1\leqslant p<+\infty)$ then the integral equation \eqref{integral-equivalent} is well-posed. 
Moreover, we get the following time decay rate for the solution in $L^q(\mathbb{G})$ $(2\leqslant q<+\infty)$ with any datum in $L^p(\mathbb{G})$ $(1<p\leqslant 2):$ 
\begin{equation*}
\|w(t,\cdot)\|_{L^q(\mathbb{G})}\lesssim \bigg(\int_0^t l(\tau){\rm d}\tau\bigg)^{-Q/\nu\left(\frac{1}{p}-\frac{1}{q}\right)}\|w_0\|_{L^p(\mathbb{G})},\quad \frac{\nu}{Q}\geqslant\frac{1}{p}-\frac{1}{q}.    
\end{equation*}
\end{cor}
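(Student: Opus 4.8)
The plan is to recognise \eqref{integral-equivalent} as a particular instance of the abstract scalar Volterra equation \eqref{volterra-e-locally} and then to invoke Theorem~\ref{integral-thm}. Writing $\ast$ for the Laplace convolution, \eqref{integral-equivalent} reads
\[
w(t)=w_0-\big(l\ast\mathcal{R}w\big)(t)=w_0+\int_0^t l(t-s)\,(-\mathcal{R})w(s)\,{\rm d}s\quad\text{on }L^p(\mathbb{G}),
\]
so it is of the form \eqref{volterra-e-locally} with $X=L^p(\mathbb{G})$, kernel $k=l$, constant forcing term $h\equiv w_0\in C([0,T];L^p(\mathbb{G}))$, and operator $\mathscr{L}=-\mathcal{R}$. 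Since Rockland operators are left invariant, $\mathscr{L}$ is left invariant on $\mathbb{G}$, and since $\mathcal{R}$ is positive we have $|\mathscr{L}|=\mathcal{R}$.

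I would first dispose of well-posedness. As $\mathcal{R}$ is a positive Rockland operator on the graded group $\mathbb{G}$, the $L^p$-theory of \cite[Corollary~4.2.9]{FR16} gives that $-\mathcal{R}$ generates an (exponentially) bounded analytic $C_0$-semigroup on $L^p(\mathbb{G})$; subordinating this semigroup along the completely positive kernel $l\in\mathcal{PC}$, with $l\in BV_{loc}(\mathbb{R}_+)$, by means of \cite[Theorem~4.2]{pruss} produces an exponentially bounded resolvent $S(t)\in\mathcal{B}(L^p(\mathbb{G}))$ for \eqref{integral-equivalent} whenever $w_0\in C(\mathbb{G})\cap L^p(\mathbb{G})$. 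By \cite[Proposition~1.1]{pruss} the existence of such a resolvent is equivalent to well-posedness of \eqref{integral-equivalent}, which gives the first assertion, and the solution is then $w(t,\cdot)=S(t)w_0$.

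Next I would verify the trace hypothesis needed for the decay estimate. Because $|\mathscr{L}|=\mathcal{R}$, the spectral asymptotics $\tau\big(E_{(0,v)}(\mathcal{R})\big)\lesssim v^{Q/\nu}$ as $v\to+\infty$ from \cite[Theorem~8.2]{david} are precisely condition \eqref{trace-condition} with $\lambda=Q/\nu$, and the standing hypothesis $\nu/Q\geqslant 1/p-1/q$ is exactly the constraint $1/\lambda\geqslant 1/p-1/q$ appearing in Theorem~\ref{integral-thm}. Since $l\in\mathcal{PC}$ is positive, Theorem~\ref{integral-thm} applies to \eqref{integral-equivalent} with $\mathscr{L}=-\mathcal{R}$, $k=l$ and $\lambda=Q/\nu$, and for $1<p\leqslant 2\leqslant q<+\infty$ delivers
\[
\|w(t,\cdot)\|_{L^q(\mathbb{G})}=\|S(t)w_0\|_{L^q(\mathbb{G})}\leqslant C_{\lambda,p,q}\bigg(\int_0^t l(\tau)\,{\rm d}\tau\bigg)^{-\frac{Q}{\nu}\left(\frac1p-\frac1q\right)}\|w_0\|_{L^p(\mathbb{G})},
\]
which is the asserted bound; in particular $w(t,\cdot)\in L^q(\mathbb{G})$ for every $t>0$. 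The borderline case $\nu/Q=1/p-1/q$ requires no separate treatment here, as the corresponding supremum is already evaluated inside the proof of Theorem~\ref{integral-thm}.

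The only genuinely delicate point is the well-posedness step: one must make sure the resolvent furnished by the subordination principle of \cite{pruss} actually lives in $\mathcal{B}(L^p(\mathbb{G}))$ for the whole range $1\leqslant p<+\infty$ and not merely for $p=2$, which is where the full $L^p$-theory of Rockland operators (dense domain, semigroup generation, functional calculus) from \cite{FR16} is used. Once that is in place, the remainder is a direct specialisation of Theorem~\ref{integral-thm} combined with the trace estimate of \cite{david}, with no further work required.
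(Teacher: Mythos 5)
Your proposal is correct and follows essentially the same route as the paper: cast \eqref{integral-equivalent} as the abstract Volterra equation \eqref{volterra-e-locally} with $\mathscr{L}=-\mathcal{R}$ and $k=l$, obtain the exponentially bounded resolvent on $L^p(\mathbb{G})$ from \cite[Theorem 4.2]{pruss} together with \cite[Corollary 4.2.9]{FR16} (hence well-posedness by \cite[Proposition 1.1]{pruss}), and then feed the trace asymptotics $\tau\big(E_{(0,v)}(\mathcal{R})\big)\lesssim v^{Q/\nu}$ of \cite[Theorem 8.2]{david} into Theorem \ref{integral-thm} with $\lambda=Q/\nu$. Your explicit attention to the sign (so that $|\mathscr{L}|=\mathcal{R}$) and to the validity of the resolvent construction for all $1\leqslant p<+\infty$ are sensible elaborations of steps the paper leaves implicit, but they do not constitute a different argument.
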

Also, it is worthy to mention that the explicit solution of equation \eqref{integral-equivalent} can be found using the Fourier analysis of the group, for more details see e.g. \cite{RRT}. 

\subsection{$\mathscr{L}$-evolutionary differential equations}

For $k\in \mathcal{PC}$, we study the following equation:
\begin{equation}\label{differential-lcg}
\begin{split}
\partial_t\big(k\ast[w(s,x)-w_0(x)]\big)(t)+\mathscr{L}w(t,x)&=0, \quad t>0,\quad x\in G, \\
w(t,\cdot)|_{_{_{t=0}}}&=w_0(\cdot)\in L^p(G),
\end{split}
\end{equation}
where $\mathscr{L}$ is a closed linear unbounded operator in $L^p(G)$ with dense domain $\mathcal{D}(\mathscr{L})$. Now, if we think about strong (differentiable) solution, we can then rewrite equation \eqref{differential-lcg} as
\[
\int_0^t k(t-s)\partial_s w(s,x){\rm d}s+\mathscr{L}w(t,x)=0.
\]
Let us do the convolution of the above equation with $\mathscr{K}$, and use the associativity of this operation along with $(\mathscr{K}\ast k)(t)=1$, then
\[
\int_0^t \partial_s w(s,x){\rm d}s+\mathscr{K}*\mathscr{L}w(t,x)=0,
\]
which implies 
\[
w(t,x)-w_0(x)+\int_0^t \mathscr{K}(t-s)\mathscr{L}w(s,x){\rm d}s=0.
\]
Hence, we arrive at the case of a general evolutionary integral equation, see Subsection \ref{integral-equation}. Now notice first that the kernel $\mathscr{K}\in\mathcal{PC}$ \cite[Theorem 2.2]{Clement}. Also, we need that the propagator $S(t)$ for the integral equation to be (at least) differentiable, then we have to assume additionally that $\mathscr{K}$ is positive and $\mathscr{K}\in BV_{loc}(\mathbb{R}_+)$ \cite[Proposition 1.2]{pruss}. So, we have the following assertion for the differential equation \eqref{differential-lcg}. We just write the result without proving it since it is very similar to the previous section. 

\begin{thm}\label{differential-equation}
Let $(k,\mathscr{K})\in \mathcal{PC}$ be such that $\mathscr{K}$ is positive and $\mathscr{K}\in BV_{loc}(\mathbb{R}_+).$ Let $G$ be a locally compact separable unimodular group and $1<p\leqslant 2\leqslant q<+\infty$. Let $\mathscr{L}$ be any positive left invariant operator on $G$. Suppose that 
\begin{equation*}
\sup_{t>0}\sup_{v>0}\big[\tau\big(E_{(0,v)}(|\mathscr{L}|)\big)\big]^{\frac{1}{p}-\frac{1}{q}}\frac{1}{1+v\int_0^t \mathscr{K}(\tau){\rm d}\tau}<+\infty. 
\end{equation*}
If $w_0\in L^p(G)$ then the solution of the partial integro-differential equation \eqref{differential-lcg} is in $L^q(G).$

Also, in the particular case that condition \eqref{trace-condition} holds, we get the following time decay rate for the solution of equation \eqref{differential-lcg} 
\[
\|w(t,\cdot)\|_{L^q(G)}\leqslant C_{\lambda,p,q}\bigg(\int_0^t \mathscr{K}(\tau){\rm d}\tau\bigg)^{-\lambda\left(\frac{1}{p}-\frac{1}{q}\right)}\|w_0\|_{L^p(G)},\quad \frac{1}{\lambda}\geqslant\frac{1}{p}-\frac{1}{q}.    
\]
\end{thm}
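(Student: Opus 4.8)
The plan is to reduce the differential equation \eqref{differential-lcg} to the integral equation \eqref{volterra-e-locally} with the resolvent kernel $\mathscr{K}$ in place of $k$, and then quote Theorem \ref{integral-thm} verbatim. Concretely, I would first argue that a strong (differentiable) solution $w$ of \eqref{differential-lcg} satisfies
\[
w(t,x)-w_0(x)+\int_0^t \mathscr{K}(t-s)\mathscr{L}w(s,x)\,{\rm d}s=0,\quad t>0,\quad x\in G,
\]
exactly as carried out in the discussion preceding the statement: rewrite $\partial_t(k\ast[w-w_0])$ as $k\ast\partial_t w$ for differentiable $w$, convolve with $\mathscr{K}$, invoke associativity of the Laplace convolution and the Sonine identity $(\mathscr{K}\ast k)(t)=1$, and note $(\mathscr{K}\ast\partial_s w)(t)=\int_0^t\partial_s w(s,x)\,{\rm d}s=w(t,x)-w_0(x)$ since $\mathscr{K}\ast 1$ is an antiderivative applied to $\partial_s w$. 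This identifies the solution of \eqref{differential-lcg} with the solution of \eqref{volterra-e-locally} where the kernel is $\mathscr{K}$.

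Second, I would verify the hypotheses needed to invoke Theorem \ref{integral-thm} with kernel $\mathscr{K}$. By \cite[Theorem 2.2]{Clement}, if $(k,\mathscr{K})$ satisfies the Sonine relation with $k\in\mathcal{PC}$, then $\mathscr{K}\in\mathcal{PC}$ as well, so the membership assumption of Theorem \ref{integral-thm} transfers. The additional hypotheses that $\mathscr{K}$ be positive and $\mathscr{K}\in BV_{loc}(\mathbb{R}_+)$ are precisely what guarantees, via \cite[Proposition 1.2]{pruss}, that the solution operator $S(t)$ of the integral equation is differentiable, hence that the manipulations in the first step are legitimate and that \eqref{differential-lcg} is genuinely well-posed with a (classical) solution. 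With $\mathscr{K}\in\mathcal{PC}$ and the stated supremum condition, Theorem \ref{integral-thm} applied to the kernel $\mathscr{K}$ gives directly that $w(t,\cdot)\in L^q(G)$ whenever $w_0\in L^p(G)$, and, under \eqref{trace-condition}, the time decay rate
\[
\|w(t,\cdot)\|_{L^q(G)}\leqslant C_{\lambda,p,q}\bigg(\int_0^t \mathscr{K}(\tau)\,{\rm d}\tau\bigg)^{-\lambda\left(\frac{1}{p}-\frac{1}{q}\right)}\|w_0\|_{L^p(G)},\quad \frac{1}{\lambda}\geqslant\frac{1}{p}-\frac{1}{q},
\]
which is exactly the claimed estimate.

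The main obstacle — and the only place the argument is more than bookkeeping — is justifying the passage from \eqref{differential-lcg} to the integral form at the level of the operator-valued solution rather than formally. One needs $k\ast w$ (and hence $k\ast[w-w_0]$) to be differentiable in $t$ with values in $L^p(G)$, the convolution identities to hold in the Bochner sense, and $\mathscr{L}w(s,\cdot)$ to be locally integrable in $s$ so that $\mathscr{K}\ast\mathscr{L}w$ makes sense; this is where the $BV_{loc}$ and positivity assumptions on $\mathscr{K}$ enter through the regularity theory of resolvents in \cite{pruss}. Once that regularity is secured, everything else is a direct citation of Theorem \ref{integral-thm}, so — as the authors note — the proof is essentially identical to the integral-equation case and need not be spelled out in full.
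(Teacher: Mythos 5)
Your proposal matches the paper's argument exactly: the authors reduce \eqref{differential-lcg} to the integral equation with kernel $\mathscr{K}$ via the Sonine identity and associativity of convolution, note that $\mathscr{K}\in\mathcal{PC}$ by \cite[Theorem 2.2]{Clement} and that positivity and $BV_{loc}$ of $\mathscr{K}$ secure differentiability of the resolvent via \cite[Proposition 1.2]{pruss}, and then invoke Theorem \ref{integral-thm} with $k$ replaced by $\mathscr{K}$. The paper explicitly omits the written proof for this reason, so your reconstruction is correct and complete.
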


\subsection{Type of operators to get asymptotic decay}\label{type of operators}

In Theorems \ref{integral-thm} and \ref{differential-equation}, the time decay rate for the solution of equations \eqref{volterra-e-locally} and \eqref{differential-lcg} is predetermined by the condition \eqref{trace-condition}. Hence, let us mention briefly several examples of operators (in different groups) such that the trace of the spectral projections behave like $s^{\lambda}$ as $s\to+\infty$.

\begin{itemize}
\item[1.] For the Laplacian $\Delta_{\R^n}$ on the Euclidean space $\R^n$ we have \cite[Example 7.3]{RR2020}
\[
\tau\big(E_{(0,s)}(\Delta_{\R^n})\big)\lesssim s^{n/2},\quad s\to+\infty.
\]
\item[2.] The sub-Laplacian $\Delta_{sub}$ on a compact Lie group satisfies that  \cite{RR2020,[35]}
\[
\tau\big(E_{(0,s)}(-\Delta_{sub})\big)\lesssim s^{Q/2},\quad s\to+\infty,
\]
where $Q$ is the Hausdorff dimension of $G$ with respect to the control distance generated by the sub-Laplacian.

\item[3.] Let us consider the positive sub-Laplacian on the Heisenberg group $\mathbb{H}^n$. By \cite[Formula (7.17)]{RR2020}, it follows that
\[
\tau\big(E_{(0,s)}(\mathscr{L})\big)\lesssim s^{n+1},\quad s\to+\infty.
\]

\item[4.] For a positive Rockland operator $\mathcal{R}$ of order $\nu$ on a graded Lie group, we know \cite[Theorem 8.2]{david} that
\[
\tau\big(E_{(0,s)}(\mathcal{R})\big)\lesssim s^{Q/\nu},\quad s\to+\infty,
\]
where $Q$ is the homogeneous dimension of $G$. 

\item[5.] The non-Rockland-type operator $\mathfrak{D_1} = -(X_1^2 + X_2^2 + X_3^2 +X_4^2 +X_4^{-2})$ on the Engel group $\mathfrak{B}_4$, where $\{X_i\}$ are the vector fields that form the canonical basis of its Lie algebra. By \cite[Example 2.2]{Marianna} we get 
\[
\tau\big(E_{(0,s)}(\mathfrak{D_1})\big) \lesssim s^{3},\quad s\to+\infty.
\]

\item[6.] The non-Rockland-type operator $\mathfrak{D_2} = -(X_1^2 + X_2^2 + X_3^2 +X_4^2 + X_5^2+X_4^{-2}+X_5^{-2})$ on the Cartan group $\mathfrak{B}_5$, where $\{X_i\}$ are the vector fields that form the canonical basis of its Lie algebra. By \cite[Example 3.2]{Marianna}, we have
\[
\tau\big(E_{(0,s)}(\mathfrak{D_1})\big) \lesssim s^{9/2},\quad s\to+\infty.
\]

\item[7.] For an $m$-th order weighted subcoercive positive operator on a connected unimodular Lie group, one has  \cite[Proposition 0.3]{david2}
$$
\tau\left(E_{(0,s)}(\mathscr{L})\right) \lesssim s^{\frac{Q_*}{m}}, \quad s \rightarrow +\infty,
$$
where $Q_*$ is the local dimension of $G$ relative to the chosen weighted structure on its Lie algebra.
\item[8.] The Vladimirov operator $\mathfrak{D}^\mu$ $(\mu>0)$ \cite{[30]} on the group of $\rho$-adic numbers (abelian locally compact group denoted by $\Q_{\rho}$ with $\rho$ being a prime number) behaves like \cite[Section 4]{SRR}
\[
\tau\left(E_{(0,s)}\left(\mathfrak{D}^\mu\right)\right) = 1/\rho^v \lesssim s^{1/\mu},\quad s \rightarrow +\infty,
\]
where $v$ is the
smallest integer such that $1/\rho^{v}\sim s^{1/\mu}.$
\end{itemize}

\subsection{Examples}\label{Examples} Let us now discuss some applications of the results of the previous Subsections. We also recall, for the sake of completeness, some particular examples of integral equations of scalar type, which were already mentioned in \cite{RR2020} and \cite{SRR}. Everywhere below, we assume that $G$ is a locally compact separable unimodular group and $\mathscr{L}$ is any positive left invariant operator on $G$ such that condition \eqref{trace-condition} holds.  

Frequently, we will use the two-parametric Mittag-Leffler function:
\begin{equation}\label{bimittag}
E_{\alpha,\delta}(z)=\sum_{k=0}^{+\infty} \frac{z^k}{\Gamma(\alpha k+\delta)},\quad z,\delta\in\mathbb{C},\quad \Re(\alpha)>0,
\end{equation}
which is absolutely and locally uniformly convergent for the given parameters (\cite{mittag}). To estimate the propagators associated with these Mittag-Leffler functions, we recall the inequality \cite[Theorem 1.6]{page 35}: 
\begin{equation}\label{uniform-estimate}
|E_{\alpha,\delta}(z)|\leqslant \frac{C}{1+|z|},\quad z\in\mathbb{C},\quad \delta\in\mathbb{R},\quad\alpha<2,
\end{equation}
where $\mu\leqslant |\arg(z)|\leqslant \pi$, $\pi\alpha/2<\mu<\min\{\pi,\pi \alpha\}$ and $C$ is a positive constant.

In the following equations, we also use an integro-differential operator in time, the so-called Djrbashian--Caputo fractional derivative \cite{samko}. First, we recall the Sobolev spaces \cite[Appendix]{sobolev} which will be imposed over the function space of the operators: 
\[
W^{m,p}(I;X):=\left\{g\bigg/\exists\phi\in L^{p}(I;X): g(t)=\sum_{j=0}^{m-1}a_j \frac{t^{j}}{j!}+\frac{t^{m-1}}{(m-1)!}*\phi(t),\quad t\in I\right\},
\]
where $I$ is an interval in $\mathbb{R}$ and $X$ is a complex Banach space. One can see that $a_j=g^{(j)}(0)$ and $\phi(t)=g^{(j)}(t).$

Now we recall the Riemann--Liouville fractional integral of order $\beta>0$ (\cite{samko}) which is defined by
\[
\prescript{RL}{a}I^{\beta}f(t)=\frac1{\Gamma(\beta)}\int_a^t (t-s)^{\beta-1}f(s)\,\mathrm{d}s,\qquad f\in L^1(a,T),
\]
where $L^1(a,T)$ is the Lebesgue integrable space on $(a,T).$ Let us now introduce the Djrbashian--Caputo fractional derivative:
\[
\prescript{C}{a}D^{\beta}f(t)=\prescript{RL}{a}I^{n-\beta}f^{(n)}(t),\qquad f\in AC^n[a,T],\quad n=\lfloor\beta\rfloor+1,
\]
where $AC^n[a,T]$ is the set of functions such that $f^{(n-1)}$ exists and is absolutely continuous on $[a,T].$ The above operator is useful in applications since it can be rewritten utilizing the initial conditions as follows:
\begin{equation}\label{caputo-alternative-uso}
\prescript{C}{a}D^{\beta}f(t)=\prescript{RL}{a}D^{\beta}\left(f(t)-\sum_{k=0}^{n-1}\frac{f^{(k)}(a)}{k!}(t-a)^k\right),\qquad f\in AC^n[a,T],
\end{equation}
where $\prescript{RL}{a}D^{\beta}f(t)=D^{n}\prescript{RL}{a}I^{n-\beta}f(t)$ is the Riemann--Liouville fractional derivative. For more details of abstract fractional differential equations, see the works \cite{thesis2001,thesis,page 35,samko}.  

In our studies, we mean the next examples, we use the operator \eqref{caputo-alternative-uso} defined over the functions $f\in C^{m-1}(I)$ and $h_\beta*f\in W^{m,1}(I)$, where 
\[
h_\beta(t):=\left\{
\begin{array}{rccl}
\frac{t^{\beta-1}}{\Gamma(\beta)}, & t>0, \\
0,& t\leqslant0.
\end{array}
\right.
\]

\begin{ex}[$\mathscr{L}$-Heat equation] \label{heat-ex}
Let us consider the following $\mathscr{L}$-heat equation: 
\[
\partial_t w(t,x)+\mathscr{L}w(t,x)=0,\quad w(0,x)=w_0(x), \quad t>0,\quad x\in G.
\]
For $t>0$, we can apply the Borel functional calculus \cite{BorelFunctional} to obtain
\[
w(t,x)=e^{-t\mathscr{L}}w_0(x).
\]
So, $w$ satisfies the considered equation along with its initial condition. Therefore, by Corollary \ref{coro-lplq}, we get 
\[
\|w(t,\cdot)\|_{L^{q}(G)}\leqslant C_{\lambda,p,q}t^{-\lambda\left(\frac{1}{p}-\frac{1}{q}\right)}\|w_0\|_{L^p(G)}.
\]
\end{ex}

\begin{ex}[$\mathscr{L}$-Heat type equation]\label{HeatTypeEx}

We study the following heat type equation: 
\begin{equation}\label{heatlocally}
\begin{split}
^{C}\partial_{t}^{\beta}w(t,x)+\mathscr{L}w(t,x)&=0, \quad t>0,\quad x\in G,\quad 0<\beta<1, \\
w(t,x)|_{_{_{t=0}}}&=w_0(x).
\end{split}
\end{equation}
First, we have to note that the  pair $\big(t^{-\beta}/\Gamma(1-\beta),t^{\beta-1}/\Gamma(\beta)\big)\in\mathcal{PC}$ for $0<\beta<1.$ The solution operator of equation \eqref{heatlocally} is given by $w(t,x)=E_\beta(-t^\beta \mathscr{L})w_0(x)$ \cite[Chapter 3]{thesis} (see also \cite[Prop. 3.8 and Def. 2.3]{thesis2001}). From estimate \eqref{uniform-estimate} we have that $|E_{\beta}(-t^{\beta}\lambda)|\leqslant \frac{C}{1+t^{\beta}\lambda}\sim \frac{C}{1+\lambda(1\ast s^{\beta-1})(t)}.$ Thus, by Theorem \ref{differential-equation}, the solution $w$ is in $L^q(G)$ for the $\mathscr{L}$-heat type equation \eqref{heatlocally} whenever $w_0\in L^p(G).$ We also have that 
\[
\|w(t,\cdot)\|_{L^q(G)}\leqslant C_{\beta  ,\lambda,p,q}t^{-\beta\lambda\left(\frac{1}{p}-\frac{1}{q}\right)}\|w_0\|_{L^p(G)},\quad \frac{1}{\lambda}\geqslant \frac{1}{p}-\frac{1}{q},\quad t>0.    
\]
\end{ex}

\begin{ex}[$\mathscr{L}$-Wave type equation]\label{WaveTypeEx}

The following equation can  interpolate between wave (without being wave, $\beta<2$) and heat types:  
\begin{equation}\label{locallywave}
\begin{split}
^{C}\partial_{t}^{\beta}w(t,x)+\mathscr{L}w(t,x)&=0, \quad t>0,\quad x\in G,\quad 1<\beta<2, \\
w(t,x)|_{_{_{t=0}}}&=w_0(x), \\
\partial_t w(t,x)|_{_{_{t=0}}}&=w_1(x).
\end{split}
\end{equation}
The solution operator of equation \eqref{locallywave} is given by: 
\begin{equation}\label{wave-eq}
w(t,x)=E_\beta(-t^{\beta}\mathscr{L})w_0(x)+\prescript{RL}{0}I^{1}_t E_{\beta}(-t^{\beta}\mathscr{L})w_1(x).
\end{equation}
Moreover, it is easy to check that $\prescript{RL}{0}I^{1}_t E_{\beta}(-t^{\beta}s)=tE_{\beta,2}(-t^{\beta}s).$ So, by using the propagators of equation \eqref{wave-eq}, the last equality, the condition \eqref{need-general} for $\psi(t;v)=\frac{1}{1+t^{\beta}v}$, estimate \eqref{uniform-estimate} and Corollary \ref{coro-lplq}, we get 
\[
\|w(t,\cdot)\|_{L^q(G)}\lesssim t^{-\beta \lambda\left(\frac{1}{p}-\frac{1}{q}\right)}\big(\|w_0\|_{L^p(G)}+t\|w_1\|_{L^p(G)}\big).\]
\end{ex}

\begin{ex}[$\mathscr{L}$-Schr\"odinger type equation]

Consider the equation: 
\begin{align}\label{schro-ex}
\begin{split}
i\,^{C}\partial_{t}^{\beta}w(t,x)+\mathscr{L}w(t,x)&=0, \quad t>0,\quad x\in G,\quad 0<\beta<1, \\
w(t,x)|_{_{_{t=0}}}&=w_0(x).
\end{split}
\end{align}
 The solution operator of equation \eqref{schro-ex} is given by
\begin{equation*}
    w(t,x)=E_\beta(it^\beta \mathscr{L})w_0(x),\quad 0<\beta<1.
\end{equation*}  
Applying estimate \eqref{uniform-estimate} to the above propagator and Theorem \ref{differential-equation}, we get 
\[
\|w(t,\cdot)\|_{L^q(G)}\leqslant C_{\beta,\lambda,p,q}t^{-\beta \lambda \left(\frac{1}{p}-\frac{1}{q}\right)}\|w_0\|_{L^p(G)}.
\]
\end{ex}
By now, we are prepared to introduce some new type of equations which were not considered before nowhere in this setting. Of course, it is not just restricted to those ones, but it will give a wide panorama of generality and diversity of our results. 

\begin{ex}[Multi-term $\mathscr{L}$-heat type equations]
    
We consider the following multi-term heat type equation: 
\begin{equation}\label{multi-heatlocally}
\begin{split}
^{C}\partial_{t}^{\beta}w(t,x)+\sum_{k=1}^{m}\sigma_i \,^{C}\partial_{t}^{\beta_k}w(t,x)+\mathscr{L}w(t,x)&=0, \quad t>0,\quad x\in G, \\
w(t,x)|_{_{_{t=0}}}&=w_0(x),
\end{split}
\end{equation}
where $0<\beta_m<\cdots<\beta_1<\beta\leqslant 1$ and $\sigma_i>0$ for $i=1,\ldots,m.$ 

Notice that the kernel of the associated integral equation \eqref{multi-heatlocally} is given by $\mathfrak{K}(t)=t^{\beta-1}E_{(\beta-\beta_1,\ldots,\beta-\beta_m),\beta}(-\sigma_1 t^{\beta-\beta_1},\ldots,-\sigma_m t^{\beta-\beta_m})$ with the special property that $\mathfrak{K}(t)\in C(\mathbb{R}^+)\cap L^1_{loc}(\mathbb{R}^+)$ is a completely monotonic function, see page 98 and Theorem 3.2 of \cite{multi-cmf}. Therefore, by \cite[Corollary 2.4]{pruss}, we have that the integral equation associated to problem \eqref{multi-heatlocally} is well-posed and admits a bounded analytic solution operator $S(t)$. Thus, by Theorem \ref{differential-equation}, one has
\[
\|w(t,\cdot)\|_{L^q(G)}\leqslant C_{\beta,\beta_1,\ldots,\beta_m,\lambda,p,q}\bigg(\mathfrak{K}(t)\bigg)^{-\lambda \left(\frac{1}{p}-\frac{1}{q}\right)}\|w_0\|_{L^p(G)}.
\]
\end{ex}

\begin{ex}[$\mathscr{L}$-Cauchy problem with a time-variable coefficient] We study the equation:
\begin{equation}\label{variable-coe}
\begin{split}
\partial_t w(t)+\alpha(t)\mathscr{L}w(t)&=0, \quad t>0,\quad x\in G, \\
w(x,t)|_{_{_{t=0}}}&=w_0(x),
\end{split}
\end{equation}
where $\alpha:[0,+\infty)\to[0,+\infty)$ is a continuous function and $\mathscr{L}$ is the generator of a semigroup. The solution operator is given by:
\[
S(t)=\exp\left\{-\left(\int_0^t \alpha(s){\rm d}s\right)\mathscr{L}\right\},
\]
and by Corollary \ref{coro-lplq}, we have the following decay estimate:
\[
\|w(t,\cdot)\|_{L^q(G)}\leqslant C_{\lambda,p,q}\bigg(\int_0^t \alpha(s){\rm d}s\bigg)^{-\lambda\left(\frac{1}{p}-\frac{1}{q}\right)}\|w_0\|_{L^p(G)}.  
\]    
\end{ex}

\begin{ex}
Let us now analyze the homogeneous Rayleigh–Stokes problem for a generalized second-grade fluid by means of Riemann-Liouville fractional derivative. Some models can be found in e.g. \cite{apli0,apli5,apli28}. Here we consider the general case on a locally compact group $G.$ Thus, we consider the following problem: 
\begin{equation}\label{ray}
\begin{split}
\partial_t w(t,x)-(1+\gamma\prescript{RL}{}\partial_t^{\beta})\mathscr{L} w(t,x)&=0,\quad t>0,\quad x\in G, \\
w(t,x)|_{_{_{t=0}}}&=w_0(x),
\end{split}
\end{equation}
where $\gamma>0$ and $0<\beta<1.$ In these type of problems, the fractional derivative is somehow used to capture the viscoelastic behavior of the flow. Note that equation \eqref{ray} is equivalent to the following integral equation  
\[
w(t,x)=w_0(x)+\int_0^t \underbrace{\left(1+\gamma\frac{(t-s)^{-\beta}}{\Gamma(1-\beta)}\right)}_{k(t-s)}\mathscr{L}w(s){\rm d}s.
\]
Notice now that the kernel $k$ is in $L^1_{loc}(0,+\infty).$ Also, it is positive, decreasing and log $k$ convex such that $k(0^+)=+\infty.$ So, the Volterra equation $\big(\mathscr{K}\ast k\big)(t)=1$ for any $t>0$, has a unique solution \cite[Theorem 2.2 and Remark (iv)]{Clement} $\mathscr{K}\in L^1_{loc}(0,+\infty)$ which is nonnegative and nonincreasing. Thus, $(k,\mathscr{K})\in\mathcal{PC}$ and Theorem \ref{integral-thm} gives 
\begin{align*}
\|w(t,\cdot)\|_{L^q(G)}&\leqslant C_{\lambda,p,q}\bigg(\int_0^t k(\tau){\rm d}\tau\bigg)^{-\lambda\left(\frac{1}{p}-\frac{1}{q}\right)}\|w_0\|_{L^p(G)},\quad \frac{1}{\lambda}\geqslant\frac{1}{p}-\frac{1}{q}, \\    
&=C_{\lambda,p,q}\bigg(t+\frac{\gamma}{\Gamma(2-\beta)}t^{1-\beta}\bigg)^{-\lambda\left(\frac{1}{p}-\frac{1}{q}\right)}\|w_0\|_{L^p(G)}.
\end{align*}
\end{ex}

\begin{rem}
It is important to mention that for heat and wave type equations, we can recover the sharp estimate (time-decay)  given in \cite[Theorem 3.3, item (i)]{uno2} whenever $\frac{2}{n}>\frac{1}{p}-\frac{1}{q}$.
\end{rem}

\section{Well-posedness of nonlinear partial integro-differential equations}\label{strichartz}
In this section we study some type of linear and nonlinear integro-differential equations, specifically we aim for the local well-posedness\footnote{In the sense of \cite[Section 3.2]{tao}.} of such equations. A general equation of scalar type (see Section \ref{regularity}) is not treated since the generality did not allow us to manipulate or use at this moment the analysis which has been developed in previous sections for a wide class of equations predetermined by a random kernel. Nevertheless, we have the possibility to study some classical equations like the heat equation and the time-fractional versions of the heat and wave equation. First, we prove some time-space estimates of solutions to the corresponding non-homogeneous equations, then we define an appropriate Banach space to use some fix point type argument exploding such time-space estimates. Precisely, we are interested on controlling the following mixed-type norms: 
\begin{equation}
    \|w(t,x)\|_{L_t^r L_x^q([0,T] \times G)} :=\left\|\|w(t,\cdot)\|_{L_x^q(G)}\right\|_{L_t^r([0,T])}=\left(\int_{[0,T]} \|w(t,\cdot)\|_{L_x^q(G)}^r \,{\rm d}t\right)^{1/r}, 
\end{equation}
where $0<T< +\infty$ would be a fixed time. The parameters $q$ and $r$ will be fixed later.  

\medskip We begin this analysis with one of the most classic cases, i.e. the heat equation. Everywhere below, we assume that $G$ is a separable unimodular locally compact group and $\mathscr{L}$ is a positive left invariant operator acting on $G$.

\subsection{$\mathscr{L} $-Heat equation}
Let us start by considering the non-homogeneous $\mathscr{L}$-heat equation 
\begin{align}\label{NHHeat}
\begin{split}
\partial_{t}w(t,x)+\mathscr{L}w(t,x)&=f(t,x), \quad t>0,\quad x\in G, \\
w(t,x)|_{_{_{t=0}}}&=w_0(x),
\end{split}
\end{align}
whose solution is given by Duhamel's formula as: 
\[
w(t,x) = e^{-t\mathscr{L}}w_0(x) +\int_0^t e^{-(t-s)\mathscr{L}} f(s,x)\, {\rm d}s.
\]
 At this stage, for further discussions on the frame of exponents appearing on the mixed-type norms for which our solution will stay, we then introduce the concept of admisibility for a triple of exponents. This is closely related with the convergence of the solutions over the mixed-type norms.   
\begin{defn}\label{4.1}
    A triple $(r,q,p)$ is called $\mathscr{L}$-admissible if $1<p\leqslant 2\leqslant q <+\infty$, $1\leqslant r<+\infty$ and 
    \[
    \lambda\left(\frac{1}{p}-\frac{1}{q}\right) < \frac{1}{r},
    \]
    where $\lambda$ is the positive real number appearing in condition \eqref{trace-condition}. 
\end{defn}
\begin{rem}\label{remTriplesHeat}
Let us comment on the existence of such triples. Our main purpose is to prove well-posedness of nonlinear equations, so that $p$ will represent the regularity of the data, which would be fixed. Thus, let us describe two different  situations depending on the value of $p$. First of all, let us fix $1<p_0<2$. In this case, it may happen that there is no $q$ and $r$ such that $(r,q,p_0)$ is admissible. Indeed, if $\lambda\geqslant \frac{2p_0}{2-p_0}$ then it is impossible to find such triples. This is illustrated in Figure \ref{figura5}.
\begin{figure}[!ht]
\centering
\begin{tabular}{cc}
   \resizebox{0.45\textwidth}{!}{
\begin{circuitikz}
\tikzstyle{every node}=[font=\small]
\draw [->, >=Stealth] (0,0) .. controls (2,0) and (4,0) .. (6.3,0);
\draw [->, >=Stealth] (0,0) .. controls (0,5) and (0,5) .. (0,6.3);
\node [font=\normalsize] at (-1,6) {$\frac{1}{r}$};
\node [font=\small] at (6,-0.5) {$\frac{1}{q}$};
\node [font=\normalsize] at (-1,5) {$\frac{\lambda}{p_0}$};
\node [font=\small] at (-1,3.5) {$\lambda\frac{2-p_0}{2p_0}$};
\node [font=\small] at (-1,2.5) {$1$};
\node [font=\small] at (2.5,-0.5) {$1$};
\node [font=\small] at (1.25,-0.5) {$\frac{1}{2}$};
\node [font=\small] at (-1,-0.5) {$0$};
\node [font=\small, color=red] at (2.5,6) {$\lambda\geqslant\frac{2p_0}{2-p_0}$};

\draw [](2.5,0) to[short] (2.5,2.5);
\draw [](0,2.5) to[short] (2.5,2.5);
\draw [dashed, color={rgb,255:red,255; green,0; blue,0}, short] (0,5) to[] (1.25,3.5);
[
\end{circuitikz}
} & \resizebox{0.45\textwidth}{!}{
\begin{circuitikz}
\tikzstyle{every node}=[font=\small]
\draw [->, >=Stealth] (0,0) .. controls (2,0) and (4,0) .. (6.3,0);
\draw [->, >=Stealth] (0,0) .. controls (0,5) and (0,5) .. (0,6.3);
\node [font=\normalsize] at (-1,6) {$\frac{1}{r}$};
\node [font=\small] at (6,-0.5) {$\frac{1}{q}$};
\node [font=\normalsize] at (-1,3.5) {$\frac{\lambda}{p_0}$};
\node [font=\small] at (-1,1.5) {$\lambda\frac{2-p_0}{2p_0}$};
\node [font=\small] at (-1,2.5) {$1$};
\node [font=\small] at (2.5,-0.5) {$1$};
\node [font=\small] at (1.25,-0.5) {$\frac{1}{2}$};
\node [font=\small] at (-1,-0.5) {$0$};
\node [font=\small, color=blue] at (2.5,6) {$\lambda<\frac{2p_0}{2-p_0}$};

\draw [](2.5,0) to[short] (2.5,2.5);
\draw [](0,2.5) to[short] (2.5,2.5);
\draw [dashed, color={rgb,255:red,0; green,0; blue,255}, short] (0,3.5) to[] (1.25,1.5);
\fill[color={rgb,255:red,153; green,153; blue,255}]  (1.25,1.5) -- (1.25,2.5) -- (0.625,2.5) -- cycle;
[
\end{circuitikz}
} \\
\end{tabular}
\caption{Triples for $1<p_0<2$. In red, empty region of triples if $\lambda\geqslant\frac{2p_0}{2-p_0}$. In blue, existence of triples if $\lambda<\frac{2p_0}{2-p_0}$. }
\label{figura5}
\end{figure}
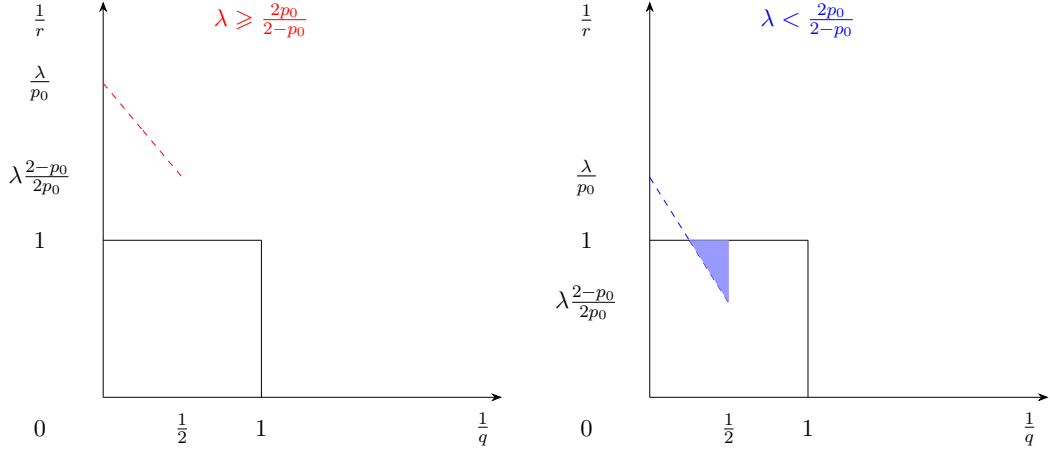

On the other hand, if we fix $p_0=2$ there will be always triples independently of the value of $\lambda$ as it is shown in Figure \ref{figura6}. Notice that there is more abundance of triples when $\lambda<2$. 
\begin{figure}[!ht]
\centering
\begin{tabular}{cc}
   \resizebox{0.45\textwidth}{!}{
\begin{circuitikz}
\tikzstyle{every node}=[font=\small]
\draw [->, >=Stealth] (0,0) .. controls (2,0) and (4,0) .. (6.3,0);
\draw [->, >=Stealth] (0,0) .. controls (0,5) and (0,5) .. (0,6.3);
\node [font=\normalsize] at (-1,6) {$\frac{1}{r}$};
\node [font=\small] at (6,-0.5) {$\frac{1}{q}$};
\node [font=\normalsize] at (-1,5) {$\frac{\lambda}{2}$};
\node [font=\small] at (-1,2.5) {$1$};
\node [font=\small] at (2.5,-0.5) {$1$};
\node [font=\small] at (1.25,-0.5) {$\frac{1}{2}$};
\node [font=\small] at (-1,-0.5) {$0$};
\node [font=\small, color=red] at (2.5,6) {$\lambda\geqslant2$};

\draw [](2.5,0) to[short] (2.5,2.5);
\draw [](0,2.5) to[short] (2.5,2.5);
\draw [dashed, color={rgb,255:red,255; green,0; blue,0}, short] (0,5) to[] (1.25,0);
\fill[color={rgb,255:red,255; green,153; blue,153}]  (1.25,0) -- (1.25,2.5) -- (0.625,2.5) -- cycle;
[
\end{circuitikz}
} & \resizebox{0.45\textwidth}{!}{
\begin{circuitikz}
\tikzstyle{every node}=[font=\small]
\draw [->, >=Stealth] (0,0) .. controls (2,0) and (4,0) .. (6.3,0);
\draw [->, >=Stealth] (0,0) .. controls (0,5) and (0,5) .. (0,6.3);
\node [font=\normalsize] at (-1,6) {$\frac{1}{r}$};
\node [font=\small] at (6,-0.5) {$\frac{1}{q}$};
\node [font=\small] at (-1,1.5) {$\frac{\lambda}{2}$};
\node [font=\small] at (-1,2.5) {$1$};
\node [font=\small] at (2.5,-0.5) {$1$};
\node [font=\small] at (1.25,-0.5) {$\frac{1}{2}$};
\node [font=\small] at (-1,-0.5) {$0$};
\node [font=\small, color=blue] at (2.5,6) {$\lambda<2$};

\draw [](2.5,0) to[short] (2.5,2.5);
\draw [](0,2.5) to[short] (2.5,2.5);
\draw [dashed, color={rgb,255:red,0; green,0; blue,255}, short] (0,1.5) to[] (1.25,0);
\draw [dashed, color={rgb,255:red,0; green,0; blue,255}, short] (0,1.5) to[] (0,2.5);
\fill[color={rgb,255:red,153; green,153; blue,255}]  (1.25,0) -- (1.25,2.5) -- (0,2.5) -- (0,1.5) -- cycle;
[
\end{circuitikz}
} \\
\end{tabular}
\caption{Triples for $p_0=2$. In red, a non-empty region of triples if $\lambda\geqslant2$. In blue, bigger region of triples if $\lambda<2$. }
\label{figura6}
\end{figure}
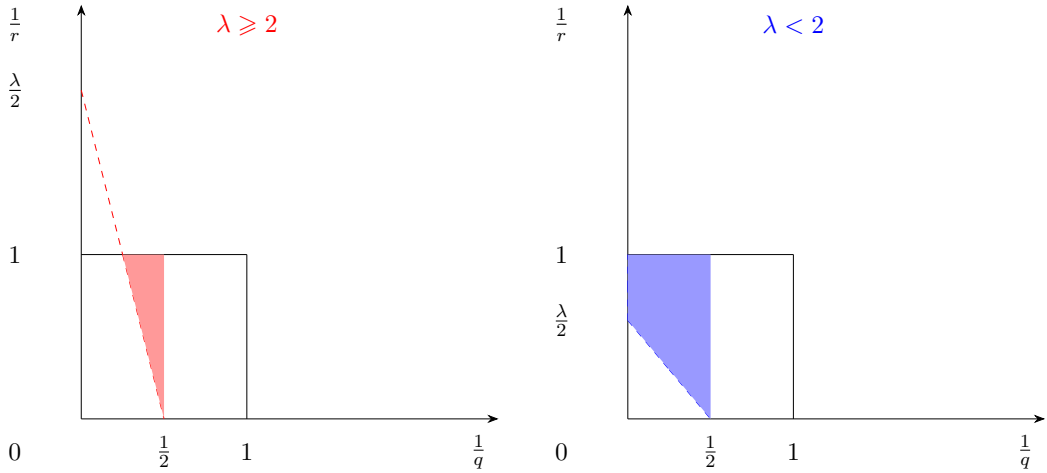
Henceforth, we will always work with non-empty triples. Thus it will be implicitly assumed that $\lambda<\frac{2p_0}{2-p_0}$ if $1<p_0<2$. 
\end{rem} 
Using Definition \ref{4.1} and Example \ref{heat-ex} we immediately obtain some space-time estimates for the homogeneous and non-homogenous part of the solution of equation \eqref{NHHeat} for finite time. 
\begin{prop}\label{timeSpaceEstimatesHeat}
    Suppose that the operator $\mathscr{L}$ satisfies the condition \eqref{trace-condition}. Let $(r,q,p)$ be a $\mathscr{L}$-admissible triple, $0<T<+\infty$ and $1\leqslant\rho\leqslant r <+\infty$. Then the solution of equation \eqref{NHHeat} satisfies 
    \begin{equation}
    \label{strichHeat}
        \|w\|_{L_t^r L_x^q([0,T] \times G)}\leqslant C_{\lambda,q,p,r,\rho,T}\left(\|w_0\|_{L_x^{p}(G)} + \|f\|_{L_t^{\rho} L_x^{p}([0,T] \times G)}\right),
    \end{equation}
    for some constant $C_{\lambda,q,p,r,\rho,T}>0$. 
\end{prop}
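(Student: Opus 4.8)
The strategy is to estimate the two pieces of Duhamel's formula
\[
w(t,x) = e^{-t\mathscr{L}}w_0(x) +\int_0^t e^{-(t-s)\mathscr{L}} f(s,x)\,{\rm d}s
\]
separately in the mixed norm $L_t^r L_x^q([0,T]\times G)$. For the homogeneous part, Example \ref{heat-ex} (which itself comes from Corollary \ref{coro-lplq} applied with $\psi(v)=e^{-tv}$ and the trace bound \eqref{trace-condition}) gives the pointwise-in-time estimate $\|e^{-t\mathscr{L}}w_0\|_{L_x^q(G)}\leqslant C_{\lambda,p,q}\,t^{-\lambda(1/p-1/q)}\|w_0\|_{L_x^p(G)}$. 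Raising to the $r$-th power and integrating over $[0,T]$, the exponent $-\lambda r(1/p-1/q)$ is $>-1$ precisely because $(r,q,p)$ is $\mathscr{L}$-admissible, i.e. $\lambda(1/p-1/q)<1/r$; hence $\int_0^T t^{-\lambda r(1/p-1/q)}{\rm d}t<+\infty$ and one obtains $\|e^{-t\mathscr{L}}w_0\|_{L_t^r L_x^q}\leqslant C_{\lambda,q,p,r,T}\|w_0\|_{L_x^p(G)}$.

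For the inhomogeneous part, first apply Minkowski's integral inequality in $L_x^q(G)$ to move the spatial norm inside the time integral, then use the same propagator bound on $e^{-(t-s)\mathscr{L}}$ to get
\[
\Big\|\int_0^t e^{-(t-s)\mathscr{L}} f(s,\cdot)\,{\rm d}s\Big\|_{L_x^q(G)}\leqslant C_{\lambda,p,q}\int_0^t (t-s)^{-\lambda(1/p-1/q)}\|f(s,\cdot)\|_{L_x^p(G)}\,{\rm d}s.
\]
The right-hand side is a convolution on $[0,T]$ of the kernel $t\mapsto t^{-\lambda(1/p-1/q)}$ with the function $s\mapsto\|f(s,\cdot)\|_{L_x^p(G)}$. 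Taking the $L_t^r$ norm and invoking Young's convolution inequality, $\|g\ast h\|_{L^r}\leqslant\|g\|_{L^a}\|h\|_{L^\rho}$ with $\tfrac1r+1=\tfrac1a+\tfrac1\rho$, reduces matters to checking that the kernel lies in $L^a([0,T])$, i.e. that $a\,\lambda(1/p-1/q)<1$. Since $1\leqslant\rho\leqslant r$ forces $a\in[1,\infty)$ with $1/a=1-1/\rho+1/r\geqslant 1/r$, admissibility $\lambda(1/p-1/q)<1/r\leqslant 1/a$ gives exactly the required integrability of the kernel on the finite interval $[0,T]$ (here finiteness of $T$ is what makes $t^{-\lambda(1/p-1/q)}\in L^a([0,T])$ even near the admissibility threshold; one only needs the singularity at $s=t$ integrable). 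This yields $\big\|\int_0^t e^{-(t-s)\mathscr{L}}f(s,\cdot){\rm d}s\big\|_{L_t^r L_x^q}\leqslant C_{\lambda,q,p,r,\rho,T}\|f\|_{L_t^\rho L_x^p([0,T]\times G)}$, and adding the two estimates gives \eqref{strichHeat}.

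The main obstacle—really the only delicate point—is the bookkeeping of exponents in the application of Young's inequality: one must verify that the Young relation $1+1/r=1/a+1/\rho$ is solvable with $a\geqslant 1$ under the hypothesis $1\leqslant\rho\leqslant r$, and that the resulting $a$ satisfies $a\,\lambda(1/p-1/q)<1$ using only the admissibility condition. Since $\rho\leqslant r$ gives $1/\rho\geqslant 1/r$, hence $1/a=1+1/r-1/\rho\leqslant 1$ so $a\geqslant 1$, and also $1/a=1-(1/\rho-1/r)\geqslant 1 - (1-1/r)=1/r$ when $\rho\leqslant r$ is combined with $\rho\geqslant 1$; thus $1/a\geqslant 1/r>\lambda(1/p-1/q)$, which is exactly what is needed. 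Everything else—Minkowski's integral inequality, the pointwise propagator bound from Example \ref{heat-ex}, and absorbing all finite-$T$ factors into the constant $C_{\lambda,q,p,r,\rho,T}$—is routine.
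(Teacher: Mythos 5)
Your proposal is correct and follows essentially the same route as the paper's proof: the homogeneous part is handled by the pointwise-in-time bound from Example \ref{heat-ex} together with the admissibility condition $\lambda(1/p-1/q)<1/r$, and the inhomogeneous part by Minkowski's integral inequality, the same propagator bound, and Young's convolution inequality with the exponent $1/\mu = 1+1/r-1/\rho$ (your $a$), whose integrability check $1/\mu\geqslant 1/r>\lambda(1/p-1/q)$ matches the paper's verification that $(\mu,q,p)$ is again $\mathscr{L}$-admissible. No gaps.
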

\begin{proof}
    On the one hand, utilizing Example \ref{heat-ex} we directly compute the mixed-type norm for the homogeneous part:
\begin{align*}
    \left\|e^{-t\mathscr{L}}w_0(x)\right\|_{L_t^r L_x^q([0,T]\times G)} &= \left\| \left\| e^{-t\mathscr{L}}w_0(x)\right\|_{L_x^q(G)}   \right\|_{L_t^r([0,T])}\\
    &\leqslant C_{\lambda,q,p}\left\|t^{-\lambda\left(\frac{1}{p}-\frac{1}{q}\right)}\|w_0\|_{L_x^{p}(G)}\right\|_{L_t^r([0,T])}\\
    &\leqslant C_{\lambda,q,p,r} T^{\frac{1}{r}-\lambda\left(\frac{1}{p}-\frac{1}{q}\right)}\|w_0\|_{L_x^{p}(G)},
\end{align*}
where the latter integral converges because $(r,q,p)$ is $\mathscr{L}$-admissible. On the other hand, again by Example \ref{heat-ex} and Young's inequality we get for the non-homogeneous part that
 \begin{align*}
        \left\|\int_0^t e^{-(t-s)\mathscr{L}} f(s,x)\, ds\right\|_{L_t^{r} L_x^{q}([0,T] \times G)} &\leqslant \left\|\int_0^t \left\|e^{-(t-s)\mathscr{L}} f(s,x)\right\|_{L^q(G)}\, {\rm d}s\right\|_{L_t^{r}([0,T])}\\
        &\leqslant C_{\lambda,q,p}\left\| t^{-\lambda\left(\frac{1}{p}-\frac{1}{q}\right)} * \left\|f(t,\cdot)\right\|_{L^{p}(G)}\right\|_{L_t^{r}([0,T])}\\
        &\leqslant C_{\lambda,q,p} \left\|t^{-\lambda\left(\frac{1}{p}-\frac{1}{q}\right)}\right\|_{L_t^\mu([0,T])}\|f\|_{L_t^{\rho}L_x^{p}([0,T] \times G)}
        \end{align*}
    where $\frac{1}{\mu}:=1+\frac{1}{r}-\frac{1}{\rho}$ comes out of Young's inequality. Since $(r,q,p)$ is $\mathscr{L}$-admissible and $1\leqslant\rho\leqslant r<+\infty$ we have that 
    \[
    \frac{1}{\mu}=\left(1-\frac{1}{\rho}\right) +\frac{1}{r}>\lambda\left(\frac{1}{p}-\frac{1}{q}\right) \text{ and } 1\leqslant \mu <+\infty,
    \]
    thus $(\mu,q,p)$ is $\mathscr{L}$-admissible and the $L_t^\mu$-norm converges. Precisely, we obtain that 
    \[
    \left\|\int_0^t e^{-(t-s)\mathscr{L}} f(s,x)\, ds\right\|_{L_t^{r} L_x^{q}([0,T] \times G)} \leqslant C_{\lambda,q,p,r,\rho} T^{\frac{1}{\mu}-\lambda\left(\frac{1}{p}-\frac{1}{q}\right)}\|f\|_{L_t^{\rho}L_x^{p}([0,T] \times G)},
    \]
    completing the proof.
\end{proof}
\begin{rem}\label{45}
Note that estimate \eqref{strichHeat} of Proposition \ref{timeSpaceEstimatesHeat} can be obtained in a similar way for a nonlinear function $F(t,w).$   \end{rem}
Now, let us consider the nonlinear $\mathscr{L}$-heat equation
\begin{align}\label{NLHeat}
\begin{split}
\partial_{t}w(t,x)+\mathscr{L}w(t,x)&=F(t,w) \quad t>0,\quad x\in G, \\
w(t,x)|_{_{_{t=0}}}&=w_0(x).
\end{split} 
\end{align} 
Below we denote by $C_t^0$ to be the space of continuous functions over $[0,T]$ equipped with the $L^\infty$ norm. 
\begin{defn}
    Let $1<p_0\leqslant 2$. We say that problem \eqref{NLHeat} is \textit{locally well-posed} in $L_x^{p_0}(G)$ if for any $w_0^*\in L_x^{p_0}(G)$ there exist a time $T$ and an open ball $B\subset L_x^{p_0}(G)$ containing $w_0^*$, and a subset $X$ of $C_t^0L_x^{p_0}([0,T]\times G)$, such that for any $w_0\in B$ there exists a strong unique solution\footnote{See \cite[Definition 3.4]{tao} for more details.} $w\in X$ to the integral equation 
    \[
    w(t,x) = e^{-t\mathscr{L}}w_0(x) +\int_0^t e^{-(t-s)\mathscr{L}}F(s,w)\,{\rm d}s,
    \]
    and furthermore the map $w_0\mapsto w$ is continuous from $B$ to $X$. 
\end{defn}
Using the triples and having in mind Proposition \ref{timeSpaceEstimatesHeat}, we define an appropriate Banach space in order to guarantee the well-posedness of equation \eqref{NLHeat}. 

Below we use the Schwartz--Bruhat spaces, which were introduced and developed by Bruhat \cite{bruhat} with the intention to have access to distribution theory in locally compact groups. All the details and properties can be found e.g. in \cite{bruhat}. These spaces are complete locally convex topological vector spaces that are continuously and densely contained in the space of compactly supported (continuous) functions. Moreover, they are dense in every $L^p(G)$, $1\leqslant p<+\infty$.

\medskip Let us fix $1< p_0\leqslant 2$, and let $S_{H}^{p_0}([0,T]\times G)$ be defined as the closure of the Schwartz-Bruhat functions under the norm 
\[
\|w\|_{S_{H}^{p_0}([0,T]\times G)} := \sup_{(r,q,p_0) \,\mathscr{L}\text{-admissible}}\|w\|_{C_t^0L_x^{p_0}([0,T]\times G)}+\|w\|_{L_t^r L_x^q([0,T]\times G)}. 
\] 
Hence, by using Remark \ref{45}, the inequality \eqref{strichHeat} (with a nonlinear function) becomes:
\begin{equation}\label{nonLinHeatEst}
    \|w(t,x)\|_{S_{H}^{p_0}([0,T]\times G)}\leqslant C_{\lambda,p_0,\rho,T}\left(\|w_0\|_{L_x^{p_0}(G)} + \|F(t,w)\|_{L_t^{\rho} L_x^{p_0}([0,T] \times G)}\right),
\end{equation}
for any $0<T<+\infty$, some $1\leqslant \rho<+\infty$ and a big enough constant $C_{\lambda,p_0,\rho,T}$. This constant will appear in the theorem and corollary below. 

Having obtained the time-space estimates and having defined the Banach space $S_{H}^{p_0}([0,T]\times G)$, we are in a position to prove a local well-posedness result in a very abstract set up. Remember that if we fix $1<p_0<2$, we are implicitly assuming that $\lambda<\frac{2p_0}{2-p_0}$ where $\lambda$ is the real number from the condition \eqref{trace-condition}.

\begin{thm}\label{ThNLH}
Let $1<p_0\leqslant 2$. Suppose that the operator $\mathscr{L}$ satisfies the condition \eqref{trace-condition}. Suppose that there exist $0<T<+\infty$ and $1\leqslant\rho<+\infty$ such that the nonlinearity $F$ satisfies the following estimate:
    \begin{equation}
    \label{nonLinConHeat}
        \|F(u) - F(v)\|_{L_t^{\rho} L_x^{p_0}([0,T] \times G)}\leqslant \frac{1}{2C_{\lambda,p_0,\rho,T}} \|u -v\|_{S_{H}^{p_0}([0,T]\times G)},
    \end{equation}
    for all $u,v\in B_{\varepsilon}:= \{u\in S_{H}^{p_0}([0,T]\times G): \|u\|\leqslant \varepsilon\}$, for some $\varepsilon>0$, where the constant $C_{\lambda,p_0,\rho,T}$ is from \eqref{nonLinHeatEst}. 
    Then the problem \eqref{NLHeat} is locally well-posed in $L_x^{p_0}(G)$. 
\end{thm}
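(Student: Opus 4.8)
The plan is to solve the Duhamel (integral) formulation of \eqref{NLHeat} by the Banach contraction principle. Introduce, for a fixed datum $w_0\in L_x^{p_0}(G)$, the map
\[
\Phi(w)(t,x):=e^{-t\mathscr{L}}w_0(x)+\int_0^t e^{-(t-s)\mathscr{L}}F(s,w)\,{\rm d}s ,
\]
which I will study on the closed ball $B_\varepsilon\subset S_H^{p_0}([0,T]\times G)$ appearing in the statement. The first point is that, for each $w\in B_\varepsilon$, the function $\Phi(w)$ is precisely the solution of the linear non-homogeneous equation \eqref{NHHeat} with datum $w_0$ and forcing $f=F(\cdot,w)$; hence Proposition \ref{timeSpaceEstimatesHeat}, in the nonlinear form of Remark \ref{45} (that is, inequality \eqref{nonLinHeatEst}), gives
\[
\|\Phi(w)\|_{S_H^{p_0}([0,T]\times G)}\leqslant C_{\lambda,p_0,\rho,T}\Big(\|w_0\|_{L_x^{p_0}(G)}+\|F(t,w)\|_{L_t^{\rho}L_x^{p_0}([0,T]\times G)}\Big).
\]

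Next I would prove the self-mapping property $\Phi(B_\varepsilon)\subseteq B_\varepsilon$. Using \eqref{nonLinConHeat} with $v\equiv 0$, for $w\in B_\varepsilon$ one has $\|F(t,w)\|_{L_t^{\rho}L_x^{p_0}}\leqslant \|F(t,0)\|_{L_t^{\rho}L_x^{p_0}}+\varepsilon/(2C_{\lambda,p_0,\rho,T})$, so the previous display becomes
\[
\|\Phi(w)\|_{S_H^{p_0}}\leqslant C_{\lambda,p_0,\rho,T}\Big(\|w_0\|_{L_x^{p_0}}+\|F(t,0)\|_{L_t^{\rho}L_x^{p_0}}\Big)+\frac{\varepsilon}{2}.
\]
It then suffices to select the open data ball $B\subset L_x^{p_0}(G)$ around the given $w_0^*$ --- shrinking $T$ if necessary, which is legitimate because the proof of Proposition \ref{timeSpaceEstimatesHeat} exhibits $C_{\lambda,p_0,\rho,T}$ as $O(T^{\theta})$ with $\theta>0$ (coming from $\mathscr{L}$-admissibility) --- small enough that the bracket above is $\leqslant \varepsilon/2$ for every $w_0\in B$; then $\Phi$ maps $B_\varepsilon$ into itself.

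For the contraction estimate, observe that for $u,v\in B_\varepsilon$ the difference $\Phi(u)-\Phi(v)=\int_0^t e^{-(t-s)\mathscr{L}}\big(F(s,u)-F(s,v)\big)\,{\rm d}s$ solves \eqref{NHHeat} with zero initial datum and forcing $F(\cdot,u)-F(\cdot,v)$; applying \eqref{nonLinHeatEst} and then the hypothesis \eqref{nonLinConHeat} gives
\[
\|\Phi(u)-\Phi(v)\|_{S_H^{p_0}}\leqslant C_{\lambda,p_0,\rho,T}\,\|F(u)-F(v)\|_{L_t^{\rho}L_x^{p_0}}\leqslant \tfrac12\,\|u-v\|_{S_H^{p_0}} .
\]
Thus $\Phi$ is a $\tfrac12$-contraction of the complete metric space $B_\varepsilon$, so by the Banach fixed-point theorem it has a unique fixed point $w\in B_\varepsilon$, which is the desired unique strong solution of \eqref{NLHeat}, lying in $X:=B_\varepsilon\subset C_t^0L_x^{p_0}([0,T]\times G)$ (the inclusion being built into the norm $\|\cdot\|_{S_H^{p_0}}$). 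Continuous dependence on the data follows from the same computation: if $w,\widetilde w$ correspond to $w_0,\widetilde w_0\in B$, subtracting the two Duhamel formulas and invoking \eqref{nonLinHeatEst} and \eqref{nonLinConHeat} yields $\|w-\widetilde w\|_{S_H^{p_0}}\leqslant C_{\lambda,p_0,\rho,T}\|w_0-\widetilde w_0\|_{L_x^{p_0}}+\tfrac12\|w-\widetilde w\|_{S_H^{p_0}}$, hence $\|w-\widetilde w\|_{S_H^{p_0}}\leqslant 2C_{\lambda,p_0,\rho,T}\|w_0-\widetilde w_0\|_{L_x^{p_0}}$, so $w_0\mapsto w$ is Lipschitz (in particular continuous) from $B$ to $X$; this is exactly local well-posedness in $L_x^{p_0}(G)$. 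The main obstacle is the self-mapping step: it is the only place where one must carefully balance $\varepsilon$, the radius of $B$, the inhomogeneity $F(\cdot,0)$ and the life-span $T$, and it is there that the decay $C_{\lambda,p_0,\rho,T}=O(T^{\theta})$ is essential; everything else is the routine contraction-mapping scheme fed by the already-established space-time estimates.
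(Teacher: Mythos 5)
Your proposal is correct and follows essentially the same route as the paper: the paper simply packages your explicit Banach fixed-point argument into a citation of the abstract iteration procedure of \cite[Proposition 1.38]{tao}, using the same two ingredients (the bound $\|e^{-t\mathscr{L}}w_0\|_{S_H^{p_0}}\leqslant\varepsilon/2$ from Proposition \ref{timeSpaceEstimatesHeat} and the bound $C_{\lambda,p_0,\rho,T}\|F\|_{L_t^{\rho}L_x^{p_0}}$ on the Duhamel term) combined with hypothesis \eqref{nonLinConHeat}. Your explicit handling of the $F(\cdot,0)$ term and of the Lipschitz dependence on data is a slightly more detailed unpacking of what the cited proposition delivers, but it is not a different method.
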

\begin{proof}
     Let $T>0$ from the hypothesis. We are going to use \cite[Proposition 1.38]{tao}, so we set $\mathcal{S}=S_{H}^{p_0}([0,T]\times G)$ and $\mathcal{N}=L_t^{\rho} L_x^{p_0}([0,T] \times G)$. Let $B$ be a ball of fixed radius $R>0$ containing $w_0$, i.e. $\|w_0\|_{L_x^{p_0}(G)}\leqslant R$ and take $\varepsilon=2C_{\lambda,p_0,\rho,T} R$. Thus from the proof of Proposition \ref{timeSpaceEstimatesHeat}, we have that the homogeneous (linear) part of the solution satisfies 
    \[
    \|e^{-t\mathscr{L}}w_0\|_{S_{H}^{p_0}([0,T]\times G)}\leqslant \frac{\varepsilon}{2}. 
    \]
    Moreover, the same proof of Proposition \ref{timeSpaceEstimatesHeat} is also giving us that the nonlinear part of the solution satisfies 
    \[
    \left\|\int_0^t e^{-(t-s)\mathscr{L}}F(s,w)\,{\rm d}s\right\|_{S_{H}^{p_0}([0,T]\times G)}\leqslant C_{\lambda,p_0,\rho,T} \|F(t,w)\|_{L_t^{\rho} L_x^{p_0}([0,T] \times G)}.
    \]
    The latter inequalities together with the condition \eqref{nonLinConHeat} on $F$ are exactly the necessary conditions to apply the abstract iteration procedure \cite[Proposition 1.38]{tao}, therefore for $e^{-t\mathscr{L}}w_0\in B_{\varepsilon/2}$ there exists a unique solution $w\in B_\varepsilon$ to the problem \eqref{NLHeat} such that 
    \[
    \|w\|_{S_{H}^{p_0}([0,T]\times G)}\leqslant 2 \|e^{-t\mathscr{L}}w_0\|_{S_{H}^{p_0}([0,T]\times G)} \leqslant \varepsilon,
    \]
    completing the proof. 
\end{proof}
One can see that the condition \eqref{nonLinConHeat} imposed on the nonlinearity in Theorem \ref{ThNLH} is very abstract, so for completeness we provide explicitly an example of a function $F$ satisfying such condition.

\begin{cor}\label{coro-example-heat}
Let $1<p_0\leqslant 2$. Suppose that the operator $\mathscr{L}$ satisfies the condition \eqref{trace-condition}. Let $\eta$ be a $L_x^{p_0}(G)$ $\mathscr{L}$-heat subcritical exponent, i.e. $1<\eta<1+\frac{p_0}{\lambda}$, and let $\mu=\pm 1$. Then the problem $\eqref{NLHeat}$ is locally well-posed in $L_x^{p_0}(G)$ for $F(t,w)=\mu|w|^{\eta-1}w$. 
\end{cor}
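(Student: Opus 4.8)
The plan is to verify that the nonlinearity $F(t,w)=\mu|w|^{\eta-1}w$ satisfies the abstract Lipschitz-type estimate \eqref{nonLinConHeat} of Theorem \ref{ThNLH} on a small ball $B_\varepsilon\subset S_H^{p_0}([0,T]\times G)$, for a suitable choice of the auxiliary exponent $\rho$, and then invoke Theorem \ref{ThNLH} directly. First I would record the pointwise inequality $\big||u|^{\eta-1}u-|v|^{\eta-1}v\big|\leqslant C_\eta\big(|u|^{\eta-1}+|v|^{\eta-1}\big)|u-v|$, valid for $\eta>1$, and then integrate. Applying Hölder's inequality in the $x$ variable on $G$ with exponents chosen so that $\tfrac1{p_0}=\tfrac{\eta-1}{q_1}+\tfrac1{q_2}$ gives
\[
\|F(u)-F(v)\|_{L_x^{p_0}(G)}\leqslant C_\eta\Big(\|u\|_{L_x^{q_1}(G)}^{\eta-1}+\|v\|_{L_x^{q_1}(G)}^{\eta-1}\Big)\|u-v\|_{L_x^{q_2}(G)},
\]
and then Hölder in $t$ on $[0,T]$ turns this into a bound by $\|u-v\|_{S_H^{p_0}}$ times a power of $\big(\|u\|_{S_H^{p_0}}^{\eta-1}+\|v\|_{S_H^{p_0}}^{\eta-1}\big)$ times a positive power of $T$, provided the relevant triples $(r_1,q_1,p_0)$ and $(r_2,q_2,p_0)$ are $\mathscr{L}$-admissible and the time exponents are summable.

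The crux is therefore a bookkeeping check that such admissible triples actually exist under the hypothesis $1<\eta<1+\tfrac{p_0}{\lambda}$ (the ``$\mathscr{L}$-heat subcritical'' condition), together with the standing assumption $\lambda<\tfrac{2p_0}{2-p_0}$ when $p_0<2$. The natural choice is to take $q_1=q_2=q$ so that $\tfrac1{p_0}=\tfrac{\eta}{q}$, i.e. $q=\eta p_0$; one checks $q\geqslant2$ automatically since $\eta>1$ and $p_0$ is close to $2$ in the relevant range, and more importantly one needs a single $r$ with $(r,q,p_0)$ admissible, which is possible precisely when $\lambda\big(\tfrac1{p_0}-\tfrac1{q}\big)=\lambda\cdot\tfrac{\eta-1}{\eta p_0}<1$; this last inequality is exactly $\eta<1+\tfrac{p_0}{\lambda}$ after rearrangement (using $\tfrac{\eta-1}{\eta}<\tfrac{\eta-1}{1}$ is too lossy, so one should be slightly more careful and use that $\tfrac{\eta-1}{\eta p_0}<\tfrac1\lambda$ follows from the subcriticality bound). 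Then I would set $\rho$ by the Hölder relation in time, $\tfrac1\rho=\tfrac{\eta-1}{r}+\tfrac1r=\tfrac{\eta}{r}$ after choosing the same $r$ on all factors, verify $1\leqslant\rho<+\infty$, and pick $r$ large enough (still admissible) that the leftover power of $T$ is positive.

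With the estimate in hand, the final step is to absorb constants: choosing $\varepsilon>0$ small enough (and, if needed, $T$ small enough) so that $C_\eta\big(2\varepsilon^{\eta-1}\big)T^{\theta}\leqslant \tfrac{1}{2C_{\lambda,p_0,\rho,T}}$ for all $u,v\in B_\varepsilon$ — here $\theta>0$ is the surplus power of $T$ — which is exactly condition \eqref{nonLinConHeat}; note the dependence of $C_{\lambda,p_0,\rho,T}$ on $T$ is harmless because it grows at worst polynomially in $T$ from the proof of Proposition \ref{timeSpaceEstimatesHeat}, so the product $C_{\lambda,p_0,\rho,T}\cdot T^\theta$ can still be made small. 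Then Theorem \ref{ThNLH} applies verbatim and yields local well-posedness of \eqref{NLHeat} in $L_x^{p_0}(G)$. I expect the only genuinely delicate point to be the simultaneous feasibility of all the exponent constraints — admissibility of the triples, $\rho\geqslant 1$, and a strictly positive residual power of $T$ — which is where the precise form of the subcriticality threshold $1+\tfrac{p_0}{\lambda}$ is forced; the analytic inequalities themselves (pointwise bound, Hölder, Young) are routine.
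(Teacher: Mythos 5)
Your proposal is correct and follows essentially the same route as the paper: you verify the abstract Lipschitz condition \eqref{nonLinConHeat} via the pointwise bound and Hölder in $x$ and $t$ with the same exponent choices $q=\eta p_0$, $r=\eta\rho$, use the subcriticality $\eta<1+\tfrac{p_0}{\lambda}$ (together with $\lambda<\tfrac{2p_0}{2-p_0}$ when $p_0<2$) to secure an $\mathscr{L}$-admissible triple with $\rho<\tfrac{p_0}{\lambda(\eta-1)}$, and close the argument by shrinking $T$ exactly as in the paper before invoking Theorem \ref{ThNLH}. The only cosmetic difference is that the paper fixes $\varepsilon=2C_{\lambda,p_0,\rho,T}R$ from the radius of the data ball and adjusts only $T$, whereas you phrase it as tuning $\varepsilon$ and $T$; this does not change the argument.
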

\begin{proof}
    Let $T>0$ to be chosen later. We only need to prove condition \eqref{nonLinConHeat} for $F(t,w)=\mu|w|^{\eta-1}w$. The idea is to construct a convenient $\mathscr{L}$-admissible triple $(r,q, p_0)$ in order to take advantage of inequality \eqref{nonLinHeatEst} in the estimation procedure. We seek for numbers $r,q,\rho$ satisfying the following system of equations:
    \[
    r=\eta\rho,\hspace{0.5cm} \,q=\eta p_0, \hspace{0.5cm}\,\lambda\left(\frac{1}{p_0}-\frac{1}{q}\right)<\frac{1}{r}, \hspace{0.5cm}\,2\leqslant q<+\infty. 
    \]
    Since $1<\eta< 1+\frac{p_0}{\lambda}$ one can verify that by choosing $\rho$ such that 
    \[
    1\leqslant\rho <\frac{p_0}{\lambda(\eta-1)},
    \]
    it is possible to find $r$ and $q$ solving the previous system of equations, in particular turning $(r,q,p_0)$ into  an $\mathscr{L}$-admissible triple. We have to mention that in the case $1<p_0<2$, it is key to use the condition $\lambda<\frac{2p_0}{2-p_0}$ to guarantee the existence of such $\rho$. Hence, by construction (definition of $r$ and $q$) and the H\"older inequality, we get the following inequalities for all $u,v\in B_{\varepsilon}:= \{u\in S_{H}^{p_0}([0,T]\times G):\|u\|\leqslant \varepsilon\}$: 
    \begin{align*}
        \|\mu&|u|^{\eta-1}u - \mu|v|^{\eta-1}v\|_{L_t^{\rho} L_x^{p_0}([0,T] \times G)} \leqslant \\
        &\leqslant C_\eta \|(u-v)(|u|^{\eta-1} + |v|^{\eta-1})\|_{L_t^{\rho} L_x^{p_0}([0,T] \times G)}\\
        &\leqslant C_\eta \left\| \left[\left(\int_G |u-v|^{p_0\eta}\, dx\right)^{1/\eta} \left(\int_G ||u|^{\eta-1}+|v|^{\eta-1}|^{p_0\frac{\eta}{\eta-1}}\, dx\right)^{(\eta-1)/\eta}\right]^{1/p_0} \right\|_{L_t^{\rho}([0,T])}\\
        &\leqslant  C_\eta \left\| \|u-v\|_{L_x^q(G)}\||u|+|v|\|_{L_x^q(G)}^{\eta-1} \right\|_{L_t^{\rho}([0,T])} \\
        &\leqslant C_\eta \|u-v\|_{L_t^rL_x^q([0,T]\times G)}\||u|+|v|\|_{L_t^rL_x^q([0,T]\times G)}^{\eta-1} \,(\text{similar as previous H\"older})\\
        &\leqslant (2\varepsilon)^{\eta-1}C_{\eta}  \|u-v\|_{L_t^rL_x^q([0,T]\times G)}\\
        &\leqslant C_{\eta,\varepsilon}  \|u-v\|_{S^{p_0}_H([0,T] \times G)}, 
    \end{align*}
    where $\varepsilon=2C_{\lambda,p_0,\rho,T} R$ as in the proof of Theorem \ref{ThNLH}. Let us recall that according to Proposition \ref{timeSpaceEstimatesHeat} the dependence on $T$ of the constant $C_{\lambda,p_0,\rho,T}$ is given by $T^\xi$ for some $\xi>0$.  Hence, by choosing a small enough $T(p_0,\rho,\eta,\lambda,R)>0$ we are able to solve the equation 
    \[
        C_{\eta,\varepsilon} = \frac{1}{2C_{\lambda,p_0,\rho,T}}
    \]
    and the result follows. 
\end{proof}

\subsection{$\mathscr{L}$-Heat type equation}
Let us continue the analysis with case of $\mathscr{L}$-heat type equations. Consider the non-homogeneous equation
\begin{align}\label{asteriscoFH}
\begin{split}
\,^{C}\partial_{t}^{\beta}w(t,x)+\mathscr{L}w(t,x)&=f(t,x), \quad t>0,\quad x\in G,\quad 0<\beta<1, \\
w(t,x)|_{_{_{t=0}}}&=w_0(x).
\end{split}
\end{align}
From \cite{thesis2001} (see also  \cite{thesis}), we know that the solution to this equation is given by 
\begin{equation}
    w(t,x) = E_\beta(-t^\beta \mathscr{L})w_0(x)+ \int_0^t (t-s)^{\beta-1}E_{\beta,\beta}(-(t-s)^\beta\mathscr{L})f(s,x)\, {\rm d}s. 
\end{equation}
Notice that the estimations of the previous section did not explicitly covered the propagator $E_{\beta,\beta}(-t^\beta\mathscr{L})$ appearing in the non-homogeneous part of the solution, but this is not a problem since we can proceed as in Examples \ref{HeatTypeEx} and \ref{WaveTypeEx} to estimate its $L^q$ norm because, again, one has the estimation given by \eqref{uniform-estimate}: 
\[
|E_{\beta,\beta}(-t^\beta\lambda)|\leqslant \frac{C}{1+t^\beta\lambda}.
\]
\begin{lem}
\label{cotaOtroProp}
Let $0<\beta<1$, $1<p\leqslant 2\leqslant q<+\infty$ and $w\in L^p(G)$, then
    \[
    \|E_{\beta,\beta}(-t^\beta\mathscr{L})w(x)\|_{L^q(G)}\leqslant C_{\beta  ,\lambda,p,q}t^{-\beta\lambda\left(\frac{1}{p}-\frac{1}{q}\right)}\|w\|_{L^p(G)},\quad \frac{1}{\lambda}\geqslant \frac{1}{p}-\frac{1}{q},\quad t>0. 
    \]
\end{lem}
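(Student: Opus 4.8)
The plan is to reduce the claimed $L^p$–$L^q$ bound for the propagator $E_{\beta,\beta}(-t^\beta\mathscr{L})$ to the abstract spectral multiplier estimate of Corollary \ref{coro-lplq}. First I would fix $t>0$ and regard $v\mapsto E_{\beta,\beta}(-t^\beta v)$ as a Borel (indeed continuous) function on $[0,+\infty)$. The uniform bound \eqref{uniform-estimate}, applied with $\alpha=\beta<2$ and $z=-t^\beta v$ — whose argument is $\pi$ and hence lies in the admissible sector, since $\pi\beta/2<\mu<\pi\beta<\pi$, exactly as already used in Examples \ref{HeatTypeEx} and \ref{WaveTypeEx} — gives $|E_{\beta,\beta}(-t^\beta v)|\leqslant \psi_t(v)$ with $\psi_t(v):=\dfrac{C}{1+t^\beta v}$. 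For each fixed $t$ the function $\psi_t$ is continuous, monotonically decreasing on $[0,+\infty)$, satisfies $\psi_t(0)=C>0$ and $\psi_t(v)\to0$ as $v\to+\infty$, so it is an admissible majorant for Corollary \ref{coro-lplq}.

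Applying that corollary then yields, with $\frac1r=\frac1p-\frac1q$,
\[
\|E_{\beta,\beta}(-t^\beta\mathscr{L})\|_{L^p(G)\to L^q(G)}\lesssim \sup_{v>0}\frac{C}{1+t^\beta v}\big[\tau(E_{(0,v)}(\mathscr{L}))\big]^{1/r},
\]
where I have used that $\mathscr{L}$ positive gives $|\mathscr{L}|=\mathscr{L}$. Next I would insert the trace bound \eqref{trace-condition}, $\tau(E_{(0,v)}(\mathscr{L}))\lesssim v^{\lambda}$ as $v\to+\infty$ (the product being trivially controlled for $v$ near $0$, where $\tau(E_{(0,v)}(\mathscr{L}))$ stays bounded), thereby reducing everything to the elementary quantity
\[
\sup_{v>0}\frac{v^{\lambda/r}}{1+t^\beta v},\qquad \frac{\lambda}{r}\leqslant 1,
\]
the constraint $\lambda/r\leqslant1$ being precisely $\frac1\lambda\geqslant\frac1p-\frac1q$.

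The last step is the one-variable optimization. When $\lambda/r=1$, the quotient $v/(1+t^\beta v)$ is increasing in $v$ and bounded above by $t^{-\beta}$, so the supremum equals $t^{-\beta}=t^{-\beta\lambda/r}$. When $\lambda/r<1$, writing $a=\lambda/r\in(0,1)$ and $b=t^\beta$, a derivative computation shows $v^a/(1+bv)$ is maximised at $v_*=\frac{a}{(1-a)b}$ with maximal value $(1-a)^{1-a}a^{a}\,b^{-a}$, hence again $\lesssim t^{-\beta\lambda/r}$ with a constant depending only on $a$, that is, on $\lambda,p,q$. Combining the two cases gives $\sup_{v>0}\frac{v^{\lambda/r}}{1+t^\beta v}\lesssim t^{-\beta\lambda(\frac1p-\frac1q)}$ with a constant independent of $t$, which together with the factor $C=C_\beta$ coming from \eqref{uniform-estimate} produces the stated estimate $\|E_{\beta,\beta}(-t^\beta\mathscr{L})w\|_{L^q(G)}\leqslant C_{\beta,\lambda,p,q}\,t^{-\beta\lambda(\frac1p-\frac1q)}\|w\|_{L^p(G)}$. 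I do not expect a genuine obstacle here: the only points requiring care are the $t$-uniformity of the final constant (which works because the optimization constant depends only on $\lambda/r$) and verifying that $-t^\beta v$ lies in the sector of validity of \eqref{uniform-estimate}, which is immediate for $\beta<2$.
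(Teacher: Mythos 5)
Your proof is correct and follows essentially the same route as the paper, which itself only sketches this lemma by pointing to the bound $|E_{\beta,\beta}(-t^\beta v)|\leqslant C/(1+t^\beta v)$ from \eqref{uniform-estimate} and to the argument of Examples \ref{HeatTypeEx} and \ref{WaveTypeEx}: majorize the propagator by $\psi_t(v)=C/(1+t^\beta v)$, apply Corollary \ref{coro-lplq}, insert \eqref{trace-condition}, and optimize $v^{\lambda/r}/(1+t^\beta v)$ exactly as in the proof of Theorem \ref{integral-thm}. Your explicit one-variable optimization (including the boundary case $\lambda/r=1$) matches the paper's computation, so there is nothing to add.
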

Having this inequality and the estimate for the homogeneous part (Example \ref{HeatTypeEx}), one just need to follow the steps we did for the heat equation, i.e., define some triples appropriately to obtain space-time estimates for the solution of  the equation \eqref{asteriscoFH}, and then use them to prove well-posedness of some nonlinear equations. 

\begin{defn}
    A triple $(r,q,p)$ is called $\beta$-$\mathscr{L}$-admissible if $1<p\leqslant 2\leqslant q <+\infty$, $1\leqslant r< +\infty$ and 
    \[
    \beta\lambda\left(\frac{1}{p}-\frac{1}{q}\right) < \frac{1}{r},
    \]
    where $\lambda$ is the real number appearing in condition \eqref{trace-condition}. 
\end{defn}
\begin{rem}
    Note that as in the case of the heat equation, one could have cases of empty regions of triples but we omit such analysis since it is very similar to the one on Remark \ref{remTriplesHeat}. Below we always assume that we are dealing with non-empty triples. 
\end{rem}
\begin{prop}\label{timeSpaceEstimatesHeatFrac}
    Suppose that the operator $\mathscr{L}$ satisfies the condition \eqref{trace-condition}. Let $0<T<+\infty$, let $(r,q,p)$ be a $\beta$-$\mathscr{L}$-admissible triple such that $\frac{1}{\lambda}\geqslant\frac{1}{p}-\frac{1}{q},$ and let $\frac{1}{\beta}\leqslant \rho\leqslant r<+\infty$. Then the solution of equation \eqref{asteriscoFH} satisfies 
    \begin{equation}
    \label{strichHeatT}
        \|w\|_{L_t^r L_x^q([0,T] \times G)}\leqslant C_{\beta,\lambda,q,p,r,\rho,T}\left(\|w_0\|_{L_x^{p}(G)} + \|f\|_{L_t^{\rho} L_x^{p}([0,T] \times G)}\right),
    \end{equation}
    for some $C_{\beta, \lambda,q,p,r,\rho,T}>0$. 
\end{prop}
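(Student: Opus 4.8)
The plan is to run the same two-part argument as in Proposition \ref{timeSpaceEstimatesHeat}, with the heat-propagator bounds replaced by their fractional analogues supplied by Example \ref{HeatTypeEx} and Lemma \ref{cotaOtroProp}. Writing the solution via the variation-of-constants formula as $w=w_{\mathrm{hom}}+w_{\mathrm{inh}}$, where $w_{\mathrm{hom}}(t,x)=E_\beta(-t^\beta\mathscr{L})w_0(x)$ and $w_{\mathrm{inh}}(t,x)=\int_0^t (t-s)^{\beta-1}E_{\beta,\beta}(-(t-s)^\beta\mathscr{L})f(s,x)\,{\rm d}s$, I would estimate the two pieces separately in $L_t^r L_x^q([0,T]\times G)$ and add. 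Throughout, the hypothesis $\tfrac{1}{\lambda}\geqslant\tfrac{1}{p}-\tfrac{1}{q}$ is exactly what is needed to invoke Example \ref{HeatTypeEx} and Lemma \ref{cotaOtroProp}.

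For the homogeneous part, Example \ref{HeatTypeEx} gives $\|E_\beta(-t^\beta\mathscr{L})w_0\|_{L_x^q(G)}\leqslant C_{\beta,\lambda,p,q}\,t^{-\beta\lambda(\frac1p-\frac1q)}\|w_0\|_{L_x^p(G)}$. Taking the $L_t^r([0,T])$-norm, the scalar weight $t^{-\beta\lambda(\frac1p-\frac1q)}$ belongs to $L_t^r([0,T])$ precisely because $\beta$-$\mathscr{L}$-admissibility says $r\beta\lambda(\frac1p-\frac1q)<1$; this yields a bound $C_{\beta,\lambda,q,p,r}\,T^{\frac1r-\beta\lambda(\frac1p-\frac1q)}\|w_0\|_{L_x^p(G)}$, with a strictly positive power of $T$.

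For the inhomogeneous part, I would apply Lemma \ref{cotaOtroProp} inside the time integral to get $\|w_{\mathrm{inh}}(t,\cdot)\|_{L_x^q(G)}\leqslant C\int_0^t (t-s)^{\beta-1-\beta\lambda(\frac1p-\frac1q)}\|f(s,\cdot)\|_{L_x^p(G)}\,{\rm d}s$, which is the Laplace convolution of the power function $g(\sigma)=\sigma^{\beta-1-\beta\lambda(\frac1p-\frac1q)}$ with $s\mapsto\|f(s,\cdot)\|_{L_x^p(G)}\in L_t^\rho([0,T])$. Young's convolution inequality with $\tfrac1\mu:=1+\tfrac1r-\tfrac1\rho$ then reduces everything to checking that $1\leqslant\mu<+\infty$ and $g\in L_t^\mu([0,T])$. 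The condition $\rho\leqslant r$ gives $\tfrac1\mu\leqslant 1$, i.e. $\mu\geqslant 1$, while $\tfrac1\rho\leqslant 1$ gives $\tfrac1\mu\geqslant\tfrac1r>0$, so $\mu<+\infty$. Local integrability of $g^\mu$ amounts to $\beta-1-\beta\lambda(\tfrac1p-\tfrac1q)>-\tfrac1\mu=-(1+\tfrac1r-\tfrac1\rho)$, equivalently $\beta-\tfrac1\rho+\tfrac1r>\beta\lambda(\tfrac1p-\tfrac1q)$; since $\rho\geqslant\tfrac1\beta$ forces $\beta-\tfrac1\rho\geqslant 0$ and $\beta$-$\mathscr{L}$-admissibility gives $\tfrac1r>\beta\lambda(\tfrac1p-\tfrac1q)$, this holds. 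One concludes $\|w_{\mathrm{inh}}\|_{L_t^rL_x^q([0,T]\times G)}\leqslant C_{\beta,\lambda,q,p,r,\rho}\,T^{\frac1\mu-\beta\lambda(\frac1p-\frac1q)}\|f\|_{L_t^\rho L_x^p([0,T]\times G)}$, and adding the two estimates gives \eqref{strichHeatT}.

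I expect the main obstacle to be the bookkeeping in the last step: unlike the classical heat case, the Duhamel kernel now carries two compounding singularities at $s=t$ — the factor $(t-s)^{\beta-1}$ from the fractional integration and the factor $(t-s)^{-\beta\lambda(\frac1p-\frac1q)}$ from the propagator decay — so one must thread together the two structural constraints ($\beta$-$\mathscr{L}$-admissibility and $\rho\geqslant\tfrac1\beta$) to keep the combined exponent above $-1$ after raising to the power $\mu$. Everything else (the splitting, the use of Lemma \ref{cotaOtroProp}, and the extraction of the positive powers of $T$) is a routine adaptation of Proposition \ref{timeSpaceEstimatesHeat}.
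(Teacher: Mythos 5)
Your proposal is correct and follows essentially the same route as the paper: the same splitting into homogeneous and Duhamel parts, the same use of Example \ref{HeatTypeEx} and Lemma \ref{cotaOtroProp}, and the same Young-inequality bookkeeping with $\tfrac1\mu=1+\tfrac1r-\tfrac1\rho$, where $\rho\geqslant\tfrac1\beta$ and $\beta$-$\mathscr{L}$-admissibility together guarantee integrability of the singular kernel. The only slip is cosmetic: the final power of $T$ in the inhomogeneous estimate should be $T^{\frac1\mu-\left(1-\beta+\beta\lambda\left(\frac1p-\frac1q\right)\right)}$ rather than $T^{\frac1\mu-\beta\lambda\left(\frac1p-\frac1q\right)}$, and its positivity is exactly the inequality you verified.
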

\begin{proof}
    For the homogenoeus part we can directly compute the mixed-type norm utilizing Example \ref{HeatTypeEx}. Indeed
\begin{align*}
    \left\|E_\beta(-t^\beta \mathscr{L})w_0(x)\right\|_{L_t^r L_x^q([0,T]\times G)} &= \left\| \left\| E_\beta(-t^\beta \mathscr{L})w_0(x)\right\|_{L_x^q(G)}   \right\|_{L_t^r([0,T])}\\
    &\leqslant C_{\beta,\lambda,q,p}\left\|t^{-\beta\lambda\left(\frac{1}{p}-\frac{1}{q}\right)}\|w_0\|_{L_x^{p}(G)}\right\|_{L_t^r([0,T])}\\
    &\leqslant C_{\beta,\lambda,q,p,r} T^{\frac{1}{r}-\beta\lambda\left(\frac{1}{p}-\frac{1}{q}\right)}\|w_0\|_{L_x^{p}(G)},
\end{align*}
where the latter integral converges because $(r,q,p)$ is $\beta$-$\mathscr{L}$-admissible. For the non-homogeneous part we carry on the estimation process using Lemma \ref{cotaOtroProp} and Young's inequality, therefore we get
 \begin{align*}
        \left\|\int_0^t (t-s)^{\beta-1}E_{\beta,\beta}(-\right.&(t-s)^\beta\mathscr{L})f(s,x)\, {\rm d}s\bigg\|_{L_t^{r} L_x^{q}([0,T] \times G)} \\
        &\leqslant \left\|\int_0^t (t-s)^{\beta-1}\left\|E_{\beta,\beta}(-(t-s)^\beta\mathscr{L}) f(s,x)\right\|_{L^q(G)}\, {\rm d}s\right\|_{L_t^{r}([0,T])}\\
        &\leqslant C_{\beta,\lambda,q,p}\left\| t^{-\left(1-\beta +\beta\lambda\left(\frac{1}{p}-\frac{1}{q}\right)\right)} * \left\|f(t,\cdot)\right\|_{L^{p}(G)}\right\|_{L_t^{r}([0,T])}\\
        &\leqslant C_{\beta,\lambda,q,p} \left\|t^{-\left(1-\beta +\beta\lambda\left(\frac{1}{p}-\frac{1}{q}\right)\right)}\right\|_{L_t^\mu([0,T])}\|f\|_{L_t^{\rho}L_x^{p}([0,T] \times G)}\\
        &\leqslant C_{\beta,\lambda,q,p,r,\rho} T^{\frac{1}{\mu}-\left(1-\beta +\beta\lambda\left(\frac{1}{p}-\frac{1}{q}\right)\right)}\|f\|_{L_t^{\rho}L_x^{p}([0,T] \times G)},
    \end{align*}
    where $\frac{1}{\mu}:=1+\frac{1}{r}-\frac{1}{\rho}>1-\beta +\beta\lambda\left(\frac{1}{p}-\frac{1}{q}\right)$ and $1\leqslant \mu <+\infty$ since $(r,q,p)$ is $\beta$-$\mathscr{L}$-admissible and $\frac{1}{\beta}\leqslant\rho\leqslant r <+\infty$, which guarantees convergence of last integral on time.
\end{proof}

\begin{rem}
Note that in Propositions \ref{timeSpaceEstimatesHeat} and \ref{timeSpaceEstimatesHeatFrac}, the parameter $\rho$ is changing due to the nature of the classical heat equation and the counterpart of the fractional one. In fact, in Proposition \ref{timeSpaceEstimatesHeat}, for the case of the classical heat equation, we get $1\leqslant \rho\leqslant r <+\infty.$ While, for the fractional heat equation, in Proposition \ref{timeSpaceEstimatesHeatFrac}, the order $\beta$ of the time-fractional derivative appears somehow as a restriction in the condition $1<\frac{1}{\beta}\leqslant\rho\leqslant r <+\infty.$
\end{rem}

We move on to the study of the nonlinear $\mathscr{L}$-heat type equation
\begin{align}\label{NLHeatT}
\begin{split}
\,^{C}\partial_{t}^{\beta}w(t,x)+\mathscr{L}w(t,x)&=F(t,w), \quad t>0,\quad x\in G,\quad 0<\beta<1, \\
w(t,x)|_{_{_{t=0}}}&=w_0(x).
\end{split}
\end{align} 
The definition of well-posedness and adequate Banach space are made mutatis mutandis the ones of the classical heat equation treated before.  
\begin{defn}
Let $1<p_0\leqslant 2$. We say that the problem \eqref{NLHeatT} is \textit{locally well-posed} in $L_x^{p_0}(G)$ if for any $w_0^*\in L_x^{p_0}(G)$ there exist a time $T$ and an open ball $B\subset L_x^{p_0}(G)$ containing $w_0^*$, and a subset $X$ of $C_t^0L_x^{p_0}([0,T]\times G)$, such that for any $w_0\in B$ there exists a strong unique solution $w\in X$ to the integral equation 
    \[
    w(t) =  E_\beta(-t^\beta \mathscr{L})w_0(x)+ \int_0^t (t-s)^{\beta-1}E_{\beta,\beta}(-(t-s)^\beta\mathscr{L})F(s,w)\, {\rm d}s,
    \]
    and furthermore the map $w_0\mapsto w$ is continuous from $B$ to $X$. 
\end{defn}
Let us fix $1< p_0\leqslant 2$, and let $S_{HT}^{p_0}([0,T]\times G)$ be defined as the closure of the Schwartz-Bruhat functions under the norm 
\[
\|w\|_{S_{HT}^{p_0}([0,T]\times G)} := \sup_{(r,q,p_0) \,\beta\text{-}\mathscr{L}\text{-admissible}} \|w\|_{C_t^0L_x^{p_0}([0,T]\times G)}+\|w\|_{L_t^r L_x^q([0,T]\times G)}. 
\] 
We omit the proof of the following theorem since it is essentially the same as the one of Theorem \ref{ThNLH}. Notice that for this proof is necessary to use the inequality \eqref{strichHeatT} (with a non-linear term $F$), i.e.:
\begin{equation}\label{nonLinHeatEstT-00}
    \|w(t,x)\|_{S_{HT}^{p_0}([0,T]\times G)}\leqslant C_{\beta,\lambda,p_0,\rho,T}\left(\|w_0\|_{L_x^{p_0}(G)} + \|F(t,w)\|_{L_t^{\rho} L_x^{p_0}([0,T] \times G)}\right), 
\end{equation}
for any $0<T<+\infty$, some $\frac{1}{\beta}\leqslant\rho<+\infty$ and some big enough constant $C_{\beta,\lambda,p_0,\rho,T}$. Notice that such constant will appear in the following theorem and corollary (this is different from the constant $C_{\lambda,p_0,\rho,T}$ which appeared previously in the heat equation).

\begin{thm}\label{NLFHE}
Let $1<p_0\leqslant 2$. Suppose that the operator $\mathscr{L}$ satisfies the condition \eqref{trace-condition}. Suppose that there exist $0<T<+\infty$ and $\frac{1}{\beta}\leqslant\rho<+\infty$ such that the nonlinearity $F$ satisfies the following estimate
    \begin{equation}
    \label{nonLinConHeatT}
        \|F(u) - F(v)\|_{L_t^{\rho} L_x^{p_0}([0,T] \times G)}\leqslant\frac{1}{2C_{\beta,\lambda,p_0,\rho,T}} \|u -v\|_{S_{HT}^{p_0}([0,T]\times G)}
    \end{equation}
    for all $u,v\in B_{\varepsilon}:= \{u\in S_{HT}^{p_0}([0,T]\times G): \|u\|\leqslant \varepsilon\}$, for some $\varepsilon>0$, where the constant $C_{\beta,\lambda,p_0,\rho,T}$ is from \eqref{nonLinHeatEstT-00}. 
    Then the problem \eqref{NLHeatT} is locally well-posed in $L_x^{p_0}(G)$. 
\end{thm}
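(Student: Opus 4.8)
The plan is to repeat, essentially verbatim, the argument used for Theorem \ref{ThNLH}, with the heat semigroup $e^{-t\mathscr{L}}$ replaced by the Mittag--Leffler propagators $E_\beta(-t^\beta\mathscr{L})$ and $E_{\beta,\beta}(-t^\beta\mathscr{L})$, and with $S_H^{p_0}$ replaced by $S_{HT}^{p_0}$. Concretely, I would invoke the abstract iteration lemma \cite[Proposition 1.38]{tao} with solution space $\mathcal{S}=S_{HT}^{p_0}([0,T]\times G)$, nonlinearity space $\mathcal{N}=L_t^{\rho}L_x^{p_0}([0,T]\times G)$ and data space $L_x^{p_0}(G)$, verifying its three structural inputs: boundedness of the linear solution map on the data, boundedness of the Duhamel operator from $\mathcal{N}$ into $\mathcal{S}$, and a small-Lipschitz bound for $F$ on a ball of $\mathcal{S}$.

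First I would fix $w_0^{*}\in L_x^{p_0}(G)$, take $B=B_R(w_0^{*})\subset L_x^{p_0}(G)$ of some radius $R>0$, set $\varepsilon:=2C_{\beta,\lambda,p_0,\rho,T}R$, and use Proposition \ref{timeSpaceEstimatesHeatFrac} with $f\equiv 0$ (which rests on the uniform Mittag--Leffler bound \eqref{uniform-estimate} via Example \ref{HeatTypeEx}) to obtain
\[
\big\|E_\beta(-t^\beta\mathscr{L})w_0\big\|_{S_{HT}^{p_0}([0,T]\times G)}\leqslant C_{\beta,\lambda,p_0,\rho,T}\|w_0\|_{L_x^{p_0}(G)}\leqslant \tfrac{\varepsilon}{2},
\]
so the homogeneous part of the solution lies in $B_{\varepsilon/2}\subset\mathcal{S}$ for every $w_0\in B$. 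Then, again from the proof of Proposition \ref{timeSpaceEstimatesHeatFrac} — which uses Lemma \ref{cotaOtroProp}, the pointwise bound $|E_{\beta,\beta}(-t^\beta\lambda)|\leqslant C/(1+t^\beta\lambda)$, and Young's inequality with exponent $\tfrac{1}{\mu}=1+\tfrac{1}{r}-\tfrac{1}{\rho}$ — applied now with a nonlinear right-hand side as in Remark \ref{45} (giving inequality \eqref{nonLinHeatEstT-00}), the Duhamel term satisfies
\[
\Big\|\int_0^t (t-s)^{\beta-1}E_{\beta,\beta}(-(t-s)^\beta\mathscr{L})F(s,w)\,{\rm d}s\Big\|_{S_{HT}^{p_0}([0,T]\times G)}\leqslant C_{\beta,\lambda,p_0,\rho,T}\,\|F(t,w)\|_{L_t^{\rho}L_x^{p_0}([0,T]\times G)}.
\]

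Combining the two displayed bounds with the contraction hypothesis \eqref{nonLinConHeatT} produces exactly the data required by \cite[Proposition 1.38]{tao}: hence for $E_\beta(-t^\beta\mathscr{L})w_0\in B_{\varepsilon/2}$ there is a unique $w\in B_\varepsilon\subset S_{HT}^{p_0}([0,T]\times G)$ solving the Duhamel integral equation of \eqref{NLHeatT}, with $\|w\|_{S_{HT}^{p_0}}\leqslant 2\|E_\beta(-t^\beta\mathscr{L})w_0\|_{S_{HT}^{p_0}}\leqslant\varepsilon$, and with the data-to-solution map $w_0\mapsto w$ continuous from $B$ into $\mathcal{S}$ (the continuity statement being part of the same abstract lemma). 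This is the claimed local well-posedness in $L_x^{p_0}(G)$; when $1<p_0<2$ one invokes, exactly as in Theorem \ref{ThNLH}, the standing assumption $\lambda<\tfrac{2p_0}{2-p_0}$ so that the relevant admissible triples are non-empty.

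The only point deserving genuine care — the rest being a routine transcription of Subsection 4.1 — is the uniformity of $C_{\beta,\lambda,p_0,\rho,T}$ across the supremum over all $\beta$-$\mathscr{L}$-admissible triples $(r,q,p_0)$ entering the $S_{HT}^{p_0}$-norm. One must check that a single $\rho$ with $\tfrac{1}{\beta}\leqslant\rho<+\infty$ serves simultaneously for every such $r$ (this is precisely why the hypothesis forces $\rho\geqslant\tfrac{1}{\beta}$, so that $\tfrac{1}{\mu}=1+\tfrac{1}{r}-\tfrac{1}{\rho}$ stays admissible and $\int_0^T t^{-(1-\beta+\beta\lambda(1/p_0-1/q))}\,{\rm d}t$ converges), that the extra restriction $\tfrac{1}{\lambda}\geqslant\tfrac{1}{p_0}-\tfrac{1}{q}$ needed for Lemma \ref{cotaOtroProp} holds on the relevant range of $q$, and that the powers $T^{1/r-\beta\lambda(1/p_0-1/q)}$ and $T^{1/\mu-(1-\beta+\beta\lambda(1/p_0-1/q))}$ are bounded uniformly in $r$ for fixed finite $T$, so that $C_{\beta,\lambda,p_0,\rho,T}$ is finite and behaves like $T^{\xi}$ for some $\xi>0$ — the latter being exactly what lets one later solve the balancing equation for a small $T$ in the associated explicit-nonlinearity corollary, as in Corollary \ref{coro-example-heat}.
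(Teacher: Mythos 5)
Your proposal is correct and matches the paper's intent exactly: the paper explicitly omits this proof, stating it is "essentially the same as the one of Theorem \ref{ThNLH}" once one uses the estimate \eqref{nonLinHeatEstT-00} coming from Proposition \ref{timeSpaceEstimatesHeatFrac} with a nonlinear right-hand side, which is precisely the transcription you carry out. Your closing remarks on the uniformity of $C_{\beta,\lambda,p_0,\rho,T}$ over the admissible triples are a welcome extra check the paper leaves implicit.
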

As in the case of the classical heat equation we can provide a concrete example of a nonlinearity of polynomial type. It is important to highlight that the proof is similar to the one of Corollary \ref{coro-example-heat}. The main difference is the admisibility triple that plays a crucial role in the convergence of the considered norms. Hence, we include some of the calculations and details of its proof.
\begin{cor}
Let $1<p_0\leqslant 2$. Suppose that the operator $\mathscr{L}$ satisfies the condition \eqref{trace-condition}. Let $\eta$ be a $L_x^{p_0}(G)$ $\mathscr{L}$-heat subcritical exponent $\left(1<\eta< 1+\frac{p_0}{\lambda}\right)$, and let $\mu=\pm 1$. Then the problem $\eqref{NLHeatT}$ is locally well-posed in $L_x^{p_0}(G)$ for $F(t, w)=\mu|w|^{\eta-1}w$. 
\end{cor}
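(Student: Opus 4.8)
The plan is to deduce the statement from the abstract Theorem~\ref{NLFHE}: for a prescribed $R>0$ I only have to produce a time $T>0$ and an exponent $\frac1\beta\leqslant\rho<+\infty$ such that $F(t,w)=\mu|w|^{\eta-1}w$ satisfies the contraction estimate \eqref{nonLinConHeatT} on the ball $B_\varepsilon\subset S_{HT}^{p_0}([0,T]\times G)$, with $\varepsilon=2C_{\beta,\lambda,p_0,\rho,T}R$. Exactly as in the proof of Corollary~\ref{coro-example-heat}, the device is to tune a $\beta$-$\mathscr{L}$-admissible triple to the homogeneity of the nonlinearity, so first I would set
\[
q=\eta p_0,\qquad r=\eta\rho,
\]
and then determine for which $\rho$ the triple $(r,q,p_0)$ is simultaneously $\beta$-$\mathscr{L}$-admissible and compatible with the hypotheses of Proposition~\ref{timeSpaceEstimatesHeatFrac}.

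The first step is to pin down the admissible range of $\rho$. With $q=\eta p_0$ one computes $\frac1{p_0}-\frac1q=\frac{\eta-1}{\eta p_0}$, so the admissibility inequality $\beta\lambda\left(\frac1{p_0}-\frac1q\right)<\frac1r$ becomes $\rho<\frac{p_0}{\beta\lambda(\eta-1)}$, while Proposition~\ref{timeSpaceEstimatesHeatFrac} additionally requires $\frac1\beta\leqslant\rho$. The pleasant point is that the factor $\beta$ cancels when these two bounds are compared: $\frac1\beta<\frac{p_0}{\beta\lambda(\eta-1)}$ is equivalent to $\lambda(\eta-1)<p_0$, i.e. precisely to the subcriticality hypothesis $1<\eta<1+\frac{p_0}{\lambda}$. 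Hence the interval $\left[\frac1\beta,\frac{p_0}{\beta\lambda(\eta-1)}\right)$ is non-empty and I choose $\rho$ inside it; the remaining requirements are then automatic, namely $\frac1\lambda\geqslant\frac1{p_0}-\frac1q$ (because $\lambda(\eta-1)<p_0<\eta p_0$) and $2\leqslant q<+\infty$ (in the range $1<p_0<2$ one uses here, as in Corollary~\ref{coro-example-heat}, the standing assumption $\lambda<\frac{2p_0}{2-p_0}$).

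Next I would run the same two H\"older estimates as in Corollary~\ref{coro-example-heat}. From the elementary bound $\big||u|^{\eta-1}u-|v|^{\eta-1}v\big|\leqslant C_\eta\,|u-v|\big(|u|^{\eta-1}+|v|^{\eta-1}\big)$, H\"older in the $x$-variable with exponents $q$ and $\frac{q}{\eta-1}$ (whose reciprocals add to $\frac1{p_0}$, exactly because $q=\eta p_0$), followed by H\"older in the $t$-variable with exponents $r$ and $\frac{r}{\eta-1}$ (legitimate since $r=\eta\rho$), gives
\[
\|F(u)-F(v)\|_{L_t^{\rho}L_x^{p_0}([0,T]\times G)}\leqslant C_\eta\,\|u-v\|_{L_t^rL_x^q([0,T]\times G)}\,\big\||u|+|v|\big\|_{L_t^rL_x^q([0,T]\times G)}^{\eta-1}.
\]
Since $\|\cdot\|_{L_t^rL_x^q([0,T]\times G)}\leqslant\|\cdot\|_{S_{HT}^{p_0}([0,T]\times G)}$ for the admissible triple just constructed, for $u,v\in B_\varepsilon$ the right-hand side is bounded by $(2\varepsilon)^{\eta-1}C_\eta\,\|u-v\|_{S_{HT}^{p_0}([0,T]\times G)}$; call the prefactor $C_{\eta,\varepsilon}$.

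Finally, recalling $\varepsilon=2C_{\beta,\lambda,p_0,\rho,T}R$ and that, according to Proposition~\ref{timeSpaceEstimatesHeatFrac}, the dependence of $C_{\beta,\lambda,p_0,\rho,T}$ on $T$ is through a positive power $T^{\xi}$, I would shrink $T=T(p_0,\rho,\eta,\lambda,R)>0$ until $C_{\eta,\varepsilon}=\frac1{2C_{\beta,\lambda,p_0,\rho,T}}$ holds; this is exactly the hypothesis \eqref{nonLinConHeatT}, so Theorem~\ref{NLFHE} applies and yields local well-posedness in $L_x^{p_0}(G)$. The step I expect to require the most attention is the interplay between the two constraints on $\rho$: unlike the classical heat case of Corollary~\ref{coro-example-heat}, the fractional Duhamel term carries the weight $(t-s)^{\beta-1}$, which forces the lower bound $\rho\geqslant\frac1\beta$; the computation above shows this is harmless, since the same $\beta$ appears in the admissibility upper bound and cancels, so the subcriticality threshold $1+\frac{p_0}{\lambda}$ is unchanged.
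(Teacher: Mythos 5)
Your proposal is correct and follows essentially the same route as the paper's proof: you construct the $\beta$-$\mathscr{L}$-admissible triple via $q=\eta p_0$, $r=\eta\rho$ with $\rho\in\left[\frac{1}{\beta},\frac{p_0}{\beta\lambda(\eta-1)}\right)$, run the same H\"older estimates as in Corollary \ref{coro-example-heat}, and shrink $T$ to enforce \eqref{nonLinConHeatT} before invoking Theorem \ref{NLFHE}. Your explicit verification that the factor $\beta$ cancels between the two constraints on $\rho$ (so subcriticality is exactly what is needed), and that $\frac{1}{\lambda}\geqslant\frac{1}{p_0}-\frac{1}{q}$ holds automatically, is a slightly more careful write-up of the same argument.
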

\begin{proof}
Let $T>0$ to be chosen later. It is sufficient to guarantee condition \eqref{nonLinConHeatT} for $F(t,w)=\mu|w|^{\eta-1}w$. So, in this case, we look for numbers $r,q,\rho$ such that:
\[
    r=\eta\rho,\hspace{0.5cm} \,q=\eta p_0, \hspace{0.5cm}\,\beta\lambda\left(\frac{1}{p_0}-\frac{1}{q}\right)<\frac{1}{r}, \hspace{0.5cm}\,2\leqslant q<+\infty. 
\]
Using the condition that $1<\eta< 1+\frac{p_0}{\lambda}$ we can choose  $\rho$ in the following frame 
    \[
    \frac{1}{\beta}\leqslant\rho < \frac{p_0}{\beta\lambda(\eta-1)}.
    \]
Therefore, we can find $r$ and $q$ solving the previous system of equations. Also, the triple $(r,q,p_0)$ is $\beta$-$\mathscr{L}$-admissible. Once we have the desired triple $(r,q,p_0)$ we can estimate $\|F(u) - F(v)\|_{L_t^{\rho} L_x^{p_0}([0,T] \times G)}$ in the same way as in Corollary \ref{coro-example-heat}, that yields to the verification of the abstract condition on $F$:
    \begin{align*}
        \|\mu|u|^{\eta-1}u - \mu|v|^{\eta-1}v\|_{L_t^{\rho} L_x^{p_0}([0,T] \times G)} \leqslant C_{\eta,\varepsilon}  \|u-v\|_{S^{p_0}_{HT}([0,T] \times G)}, 
    \end{align*}
    where $\varepsilon=2C_{\beta, \lambda,p_0,\rho,T} R$ would come from the proof of Theorem \ref{NLFHE}. Again the constant $C_{\beta, \lambda,p_0,\rho,T}$ depends on $T^\xi$ for some $\xi>0$, so by choosing a small enough $T(\beta, p_0,\rho,\eta,\lambda,R)>0$  we conclude the proof. 
\end{proof}

\subsection{$\mathscr{L}$-Wave type equation}
We now analyze the case of non-homogeneous $\mathscr{L}$-wave type equation
\begin{align}\label{asterisco-wave}
\begin{split}
\,^{C}\partial_{t}^{\beta}w(t,x)+\mathscr{L}w(t,x)&=f(t,x), \quad t>0,\quad x\in G,\quad 1<\beta<2, \\
w(t,x)|_{_{_{t=0}}}&=w_0(x), \\
\partial_t w(t,x)|_{_{_{t=0}}}&=w_1(x).
\end{split}
\end{align}
The solution is given by 
\begin{align*}
    w(t,x)=E_{\beta}(-t^{\beta}\mathscr{L})w_0(x)+&\prescript{RL}{0}I^{1}_{t} E_{\beta}(-t^{\beta}\mathscr{L}) w_1(x) \nonumber \\
    &+  \int_0^t (t-s)^{\beta-1}E_{\beta,\beta}(-(t-s)^\beta\mathscr{L})f(s,x)\, {\rm d}s.
\end{align*}
Notice that we can control the propagator appearing in the non-homogenoeus part of the solution in the same way as in Lemma \ref{cotaOtroProp} since the estimate \eqref{uniform-estimate} holds as well for $1<\beta<2$, thus we have:
\begin{lem}
\label{cotaOtroProp-w}
Let $1<\beta<2$, $1<p\leqslant 2\leqslant q<+\infty$ and $w\in L^p(G)$, then
    \[
    \|E_{\beta,\beta}(-t^\beta\mathscr{L})w(x)\|_{L^q(G)}\leqslant C_{\beta  ,\lambda,p,q}t^{-\beta\lambda\left(\frac{1}{p}-\frac{1}{q}\right)}\|w\|_{L^p(G)},\quad \frac{1}{\lambda}\geqslant \frac{1}{p}-\frac{1}{q},\quad t>0. 
    \]
\end{lem}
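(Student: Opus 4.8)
The plan is to mimic, step for step, the argument behind Lemma~\ref{cotaOtroProp}, the sole new ingredient being that the Mittag--Leffler bound~\eqref{uniform-estimate} is still at our disposal for $1<\beta<2$. First I would check that the sector hypothesis in~\eqref{uniform-estimate} is met: for $\lambda\geqslant 0$ and $t>0$ the argument of $-t^\beta\lambda$ is $\pi$, and since $\beta<2$ one may fix $\mu$ with $\pi\beta/2<\mu<\min\{\pi,\pi\beta\}=\pi$, so that $\mu\leqslant|\arg(-t^\beta\lambda)|\leqslant\pi$. Hence~\eqref{uniform-estimate} with $\alpha=\beta$, $\delta=\beta$ gives
\[
|E_{\beta,\beta}(-t^\beta\lambda)|\leqslant \frac{C}{1+t^\beta\lambda},\qquad \lambda\geqslant 0,\ t>0,
\]
exactly as in the range $0<\beta<1$.

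With $t>0$ fixed, I would then apply Corollary~\ref{coro-lplq} with $\phi(\lambda):=E_{\beta,\beta}(-t^\beta\lambda)$ (a Borel function on $[0,+\infty)$) dominated by $\psi(\lambda):=C/(1+t^\beta\lambda)$, which is continuous, monotonically decreasing, satisfies $\psi(0)=C>0$ and $\psi(\lambda)\to 0$ as $\lambda\to+\infty$. Since $\mathscr{L}$ is positive, $|\mathscr{L}|=\mathscr{L}$, and Corollary~\ref{coro-lplq} yields
\[
\|E_{\beta,\beta}(-t^\beta\mathscr{L})\|_{L^p(G)\to L^q(G)}\lesssim \sup_{s>0}\frac{C}{1+t^\beta s}\big[\tau\big(E_{(0,s)}(\mathscr{L})\big)\big]^{\frac{1}{p}-\frac{1}{q}}.
\]

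It then remains to evaluate the supremum. Writing $\frac1r=\frac1p-\frac1q$ and inserting the bound $\tau(E_{(0,s)}(\mathscr{L}))\lesssim s^{\lambda}$ from~\eqref{trace-condition}, the right-hand side is controlled by a constant multiple of $\sup_{s>0} s^{\lambda/r}/(1+t^\beta s)$. The hypothesis $\frac1\lambda\geqslant\frac1p-\frac1q$ means $\lambda/r\leqslant 1$: for $\lambda/r<1$ the supremum is attained at $s_\ast=\frac{\lambda/r}{(1-\lambda/r)t^\beta}$ and equals a $(\beta,\lambda,p,q)$-constant times $t^{-\beta\lambda/r}$, while for $\lambda/r=1$ the function $s\mapsto s/(1+t^\beta s)$ is increasing with supremum $t^{-\beta}=t^{-\beta\lambda/r}$; either way one obtains $\|E_{\beta,\beta}(-t^\beta\mathscr{L})\|_{L^p\to L^q}\leqslant C_{\beta,\lambda,p,q}\,t^{-\beta\lambda(\frac1p-\frac1q)}$, and applying this operator to $w$ gives the claim. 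The one routine technicality is that~\eqref{trace-condition} is only asymptotic as $s\to+\infty$; for $s$ in a bounded range one uses the monotonicity of $s\mapsto\tau(E_{(0,s)}(\mathscr{L}))$ and the boundedness of $\psi$, exactly as in the proof of Lemma~\ref{cotaOtroProp}. I do not anticipate any genuine obstacle: the computation is entirely parallel to the sub-case $0<\beta<1$, the passage to $1<\beta<2$ being licensed precisely by the validity of~\eqref{uniform-estimate} for every $\alpha<2$.
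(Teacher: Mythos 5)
Your proposal is correct and follows essentially the same route as the paper, which justifies Lemma \ref{cotaOtroProp-w} solely by noting that the bound \eqref{uniform-estimate} persists for $1<\beta<2$ and then invoking the same domination-plus-Corollary \ref{coro-lplq} argument (with the supremum computation as in Theorem \ref{integral-thm}) used for Lemma \ref{cotaOtroProp}. Your write-up is in fact more explicit than the paper's, which states the lemma without a separate proof.
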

By Lemma \ref{cotaOtroProp-w} and the estimate for the homogeneous part (Example \ref{locallywave}), again we follow the recipe established in the heat equation case. Hence, we first give the following definition. 
\begin{defn}
    A triple $(r,q,p)$ is called $\beta_w$-$\mathscr{L}$-admissible if $1<p\leqslant 2\leqslant q <+\infty$, $1\leqslant r< 2$,  
    \[
    \beta\lambda\left(\frac{1}{p}-\frac{1}{q}\right) < \frac{1}{r} \,\text{ and } \,1-\beta\lambda\left(\frac{1}{p}-\frac{1}{q}\right) < \frac{1}{r},
    \]
    where $\lambda$ is the real number appearing in condition \eqref{trace-condition}. 
\end{defn}
\begin{rem}
We point out two important differences with the triples occurring in the cases of the heat and heat type equations. On the one hand, there is an extra condition $1-\beta\lambda\left(\frac{1}{p}-\frac{1}{q}\right) < \frac{1}{r}$, coming from the second initial condition, that is restricting even more the existence of such triples. On the other hand, those two conditions together also restrict dramatically the values of the $L_t^r$ regularity since $1\leqslant r <2$. Nevertheless, this does not preclude the existence of such triples, in fact in the following figures (exactly Figure \ref{figura7}) we illustrate how triples always exist in the case $p_0=2$:
    \begin{figure}[!ht]
\centering
\begin{tabular}{cc}
   \resizebox{0.45\textwidth}{!}{
\begin{circuitikz}
\tikzstyle{every node}=[font=\small]
\draw [->, >=Stealth] (0,0) .. controls (2,0) and (4,0) .. (6.3,0);
\draw [->, >=Stealth] (0,0) .. controls (0,5) and (0,5) .. (0,6.3);
\node [font=\normalsize] at (-1,6) {$\frac{1}{r}$};
\node [font=\small] at (6,-0.5) {$\frac{1}{q}$};
\node [font=\normalsize] at (-1,3.75) {$\frac{\beta\lambda}{2}$};
\node [font=\small] at (-1,2.5) {$1$};
\node [font=\small] at (2.5,-0.5) {$1$};
\node [font=\small] at (1.25,-0.5) {$\frac{1}{2}$};
\node [font=\small] at (-1,-0.5) {$0$};
\node [font=\small, color=red] at (2.5,6) {$\beta\lambda>1$};

\draw [](2.5,0) to[short] (2.5,2.5);
\draw [](0,2.5) to[short] (2.5,2.5);
\draw [dashed, color={rgb,255:red,255; green,0; blue,0}, short] (0,3.75) to[] (1.25,0);
\draw [dashed, color={rgb,255:red,255; green,0; blue,0}, short] (0,-1.25) to[] (1.25,2.5);
\draw [dashed, color={rgb,255:red,255; green,0; blue,0}, short] (0,1.25) to[] (1.25, 1.25);
\fill[color={rgb,255:red,255; green,153; blue,153}]  (0.417,2.5) -- (1.25,2.5) -- (0.834,1.25) -- cycle;
[
\end{circuitikz}
} & \resizebox{0.45\textwidth}{!}{
\begin{circuitikz}
\tikzstyle{every node}=[font=\small]
\draw [->, >=Stealth] (0,0) .. controls (2,0) and (4,0) .. (6.3,0);
\draw [->, >=Stealth] (0,0) .. controls (0,5) and (0,5) .. (0,6.3);
\node [font=\normalsize] at (-1,6) {$\frac{1}{r}$};
\node [font=\small] at (6,-0.5) {$\frac{1}{q}$};
\node [font=\small] at (-1,1) {$\frac{\beta\lambda}{2}$};
\node [font=\small] at (-1,2.5) {$1$};
\node [font=\small] at (2.5,-0.5) {$1$};
\node [font=\small] at (1.25,-0.5) {$\frac{1}{2}$};
\node [font=\small] at (-1,-0.5) {$0$};
\node [font=\small, color=blue] at (2.5,6) {$\beta\lambda\leqslant 1$};

\draw [](2.5,0) to[short] (2.5,2.5);
\draw [](0,2.5) to[short] (2.5,2.5);
\draw [dashed, color={rgb,255:red,0; green,0; blue,255}, short] (0,1) to[] (1.25,0);
\draw [dashed, color={rgb,255:red,0; green,0; blue,255}, short] (0,1.5) to[] (1.25,2.5);
\draw [dashed, color={rgb,255:red,0; green,0; blue,255}, short] (0,1.25) to[] (1.25,1.25);
\draw [dashed, color={rgb,255:red,0; green,0; blue,255}, short] (0,1.25) to[] (0,2.5);
\fill[color={rgb,255:red,153; green,153; blue,255}]  (0,1.5) -- (0,2.5) --  (1.25,2.5) -- cycle;
[
\end{circuitikz}
} \\
\end{tabular}
\caption{Triples for $p_0=2$. In red, a non-empty region of triples if $\beta\lambda>1$. In blue, a different non-empty region of triples if $\beta\lambda<1$. }
\label{figura7}
\end{figure}
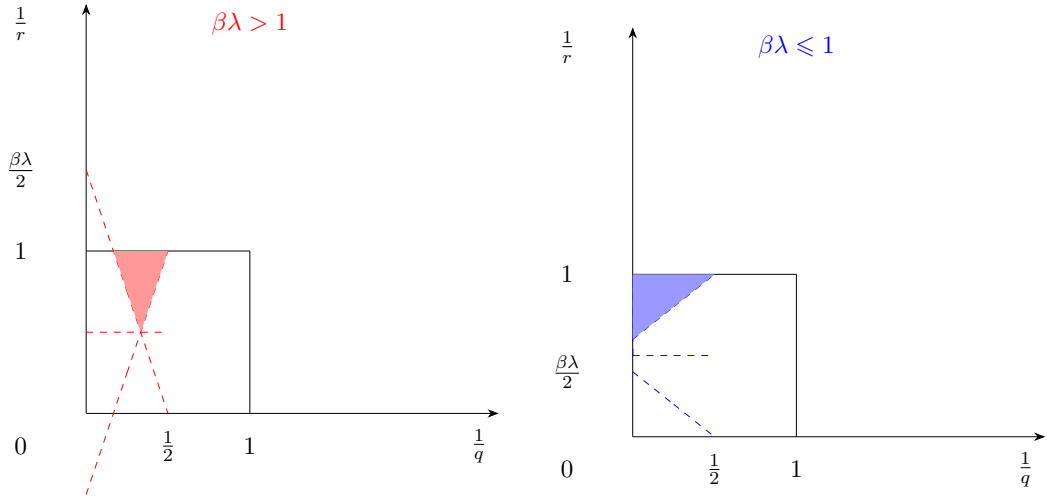
\end{rem}
\begin{prop}\label{timeSpaceEstimatesHeatFrac-w}
    Suppose that the operator $\mathscr{L}$ satisfies the condition \eqref{trace-condition}. Let $0<T<+\infty$, let $(r,q,p)$ be a $\beta_w$-$\mathscr{L}$-admissible triple such that $\frac{1}{\lambda}\geqslant\frac{1}{p}-\frac{1}{q}$,
    and let $1\leqslant \rho\leqslant r < 2$. Then the solution of the equation \eqref{asterisco-wave} satisfies 
    \begin{equation}
    \label{strichWave}
        \|w\|_{L_t^r L_x^q([0,T] \times G)}\leqslant C_{\beta,\lambda,q,p,r,\rho,T}\left(\|w_0\|_{L_x^{p}(G)} + \|w_1\|_{L_x^{p}(G)}+ \|f\|_{L_t^{\rho} L_x^{p}([0,T] \times G)}\right)
    \end{equation}
    for some $C_{\beta,\lambda,q,p,r,\rho,T}>0$. 
\end{prop}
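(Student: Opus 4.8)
The plan is to follow the pattern of the proofs of Propositions~\ref{timeSpaceEstimatesHeat} and~\ref{timeSpaceEstimatesHeatFrac}. I split the explicit solution
\[
w(t,x)=E_{\beta}(-t^{\beta}\mathscr{L})w_0(x)+\prescript{RL}{0}I^{1}_{t}E_{\beta}(-t^{\beta}\mathscr{L})w_1(x)+\int_0^t(t-s)^{\beta-1}E_{\beta,\beta}(-(t-s)^{\beta}\mathscr{L})f(s,x)\,{\rm d}s
\]
into its three Duhamel constituents, estimate each one in $L_x^q(G)$ pointwise in $t\in(0,T]$ using the bounds already available, and then take the $L_t^r([0,T])$ norm. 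The $\beta_w$-$\mathscr{L}$-admissibility conditions, together with $1\leqslant\rho\leqslant r<2$, are exactly what guarantee that the resulting one-dimensional time integrals converge.

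For the first term I proceed as in Example~\ref{locallywave}: estimate~\eqref{uniform-estimate} (valid since $\beta<2$) together with Corollary~\ref{coro-lplq} for $\psi(t;v)=1/(1+t^{\beta}v)$, and the computation of the supremum over $v$ using $\frac1\lambda\geqslant\frac1p-\frac1q$, give $\|E_{\beta}(-t^{\beta}\mathscr{L})w_0\|_{L^q(G)}\lesssim t^{-\beta\lambda(\frac1p-\frac1q)}\|w_0\|_{L^p(G)}$; taking the $L_t^r$-norm over $[0,T]$ and using the first admissibility inequality $\beta\lambda(\frac1p-\frac1q)<\frac1r$ bounds this contribution by $C\,T^{\frac1r-\beta\lambda(\frac1p-\frac1q)}\|w_0\|_{L^p(G)}$. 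For the second term I first rewrite $\prescript{RL}{0}I^{1}_{t}E_{\beta}(-t^{\beta}\mathscr{L})=tE_{\beta,2}(-t^{\beta}\mathscr{L})$ as in Example~\ref{WaveTypeEx}; applying~\eqref{uniform-estimate} to $E_{\beta,2}$ and Corollary~\ref{coro-lplq} yields $\|\prescript{RL}{0}I^{1}_{t}E_{\beta}(-t^{\beta}\mathscr{L})w_1\|_{L^q(G)}\lesssim t^{\,1-\beta\lambda(\frac1p-\frac1q)}\|w_1\|_{L^p(G)}$, and since $\beta\lambda(\frac1p-\frac1q)<\frac1r\leqslant1$ the exponent $1-\beta\lambda(\frac1p-\frac1q)$ is nonnegative, so the $L_t^r$-norm over $[0,T]$ is finite and bounded by $C\,T^{\frac1r+1-\beta\lambda(\frac1p-\frac1q)}\|w_1\|_{L^p(G)}$ (here the two admissibility inequalities together are precisely what force $1\leqslant r<2$).

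The third term is treated exactly as in the proof of Proposition~\ref{timeSpaceEstimatesHeatFrac}: Minkowski's integral inequality moves the $L_x^q$-norm inside the integral, Lemma~\ref{cotaOtroProp-w} gives $\|E_{\beta,\beta}(-(t-s)^{\beta}\mathscr{L})f(s,\cdot)\|_{L^q(G)}\lesssim (t-s)^{-\beta\lambda(\frac1p-\frac1q)}\|f(s,\cdot)\|_{L^p(G)}$, and combining with the factor $(t-s)^{\beta-1}$ reduces the matter to the time convolution $t^{\beta-1-\beta\lambda(\frac1p-\frac1q)}\ast\|f(t,\cdot)\|_{L^p(G)}$. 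Young's inequality with $\frac1\mu:=1+\frac1r-\frac1\rho\in(0,1]$ then gives a bound $C\big\|t^{\beta-1-\beta\lambda(\frac1p-\frac1q)}\big\|_{L_t^{\mu}([0,T])}\|f\|_{L_t^{\rho}L_x^{p}([0,T]\times G)}$, and adding the three estimates produces~\eqref{strichWave}.

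The only step requiring any care — and the one I expect to be the main (albeit routine) obstacle — is to verify that the kernel $t^{\beta-1-\beta\lambda(\frac1p-\frac1q)}$ really belongs to $L_t^{\mu}([0,T])$, i.e. $\mu\big(\beta-1-\beta\lambda(\frac1p-\frac1q)\big)>-1$. Setting $\gamma:=\beta\lambda(\frac1p-\frac1q)$, this is equivalent to $\frac1r-\frac1\rho+\beta-\gamma>0$; using $\gamma<\frac1r$ (first admissibility) and $\rho\geqslant1$ one gets $\frac1r-\frac1\rho+\beta-\gamma>\beta-\frac1\rho\geqslant\beta-1>0$, so the exponent is admissible and, in addition, the power of $T$ occurring in all three terms is strictly positive — which is exactly what will be needed when this estimate is later fed into the fixed-point argument for the nonlinear wave equation. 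Thus, just as in the heat and heat-type cases, no genuinely new difficulty appears beyond careful exponent arithmetic and the handling of the extra $w_1$-term.
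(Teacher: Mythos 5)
Your proposal is correct and follows essentially the same route as the paper: the same three-way splitting of the Duhamel formula, the same pointwise-in-time $L^p\to L^q$ bounds via \eqref{uniform-estimate}, Corollary \ref{coro-lplq} and Lemma \ref{cotaOtroProp-w}, and the same Young-inequality step with $\frac{1}{\mu}=1+\frac{1}{r}-\frac{1}{\rho}$ for the non-homogeneous term. Your explicit verification that $\mu\big(\beta-1-\beta\lambda(\tfrac1p-\tfrac1q)\big)>-1$ (reducing to $\beta-\tfrac1\rho\geqslant\beta-1>0$) is exactly the exponent arithmetic the paper invokes when it asserts convergence of the final time integral.
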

\begin{proof}
By using Example \ref{HeatTypeEx}, we can estimate the homogeneous part as follows: 
\begin{align*}
    \big\|E_{\beta}(-t^{\beta}\mathscr{L})&w_0(x)+\prescript{RL}{0}I^{1}_{t} E_{\beta}(-t^{\beta}\mathscr{L}) w_1(x)\big\|_{L_t^r L_x^q([0,T]\times G)} =\\ 
    &\left\| \left\|E_{\beta}(-t^{\beta}\mathscr{L})w_0(x)+\prescript{RL}{0}I^{1}_{t} E_{\beta}(-t^{\beta}\mathscr{L}) w_1(x)\right\|_{L_x^q(G)}   \right\|_{L_t^r([0,T])}\\
    &\leqslant C_{\beta,\lambda,q,p}\left\|t^{-\beta \lambda\left(\frac{1}{p}-\frac{1}{q}\right)}\left(\|w_0\|_{L^p(G)}+t\|w_1\|_{L^p(G)}\right)\right\|_{L_t^r([0,T])}\\
    &\leqslant C_{\beta,\lambda,q,p} \left\|t^{-\beta \lambda\left(\frac{1}{p}-\frac{1}{q}\right)} \|w_0\|_{L^p(G)}\right\|_{L_t^r([0,T])} + \left\|t^{1-\beta \lambda\left(\frac{1}{p}-\frac{1}{q}\right)} \|w_1\|_{L^p(G)}\right\|_{L_t^r([0,T])}\\
    &\leqslant C_{\beta,\lambda,q,p,r,T} \left(\|w_0\|_{L_x^{p}(G)}+\|w_1\|_{L_x^{p}(G)}\right),
\end{align*}
where the latter integrals converge since $(r,q,p)$ is $\beta_w$-$\mathscr{L}$-admissible. For the non-homogeneous part we apply Lemma \ref{cotaOtroProp-w} and Young's inequality, hence
 \begin{align*}
        \left\|\int_0^t (t-s)^{\beta-1}E_{\beta,\beta}(-\right.&(t-s)^\beta\mathscr{L})f(s,x)\, {\rm d}s\bigg\|_{L_t^{r} L_x^{q}([0,T] \times G)} \\
        &\leqslant \left\|\int_0^t (t-s)^{\beta-1}\left\|E_{\beta,\beta}(-(t-s)^\beta\mathscr{L}) f(s,x)\right\|_{L^q(G)}\, {\rm d}s\right\|_{L_t^{r}([0,T])}\\
        &\leqslant C_{\beta,\lambda,q,p}\left\| t^{-(1-\beta +\beta\lambda\left(\frac{1}{p}-\frac{1}{q}\right))} * \left\|f(t,\cdot)\right\|_{L^{p}(G)}\right\|_{L_t^{r}([0,T])}\\
        &\leqslant C_{\beta,\lambda,q,p} \left\|t^{-\left(1-\beta +\beta\lambda\left(\frac{1}{p}-\frac{1}{q}\right)\right)}\right\|_{L_t^\mu([0,T])}\|f\|_{L_t^{\rho}L_x^{p}([0,T] \times G)}\\
        &\leqslant C_{\beta,\lambda,q,p,r,\rho} T^{\frac{1}{\mu}-\left(1-\beta +\beta\lambda\left(\frac{1}{p}-\frac{1}{q}\right)\right)}\|f\|_{L_t^{\rho}L_x^{p}(I \times G)},
    \end{align*}
    where $\frac{1}{\mu}:=1+\frac{1}{r}-\frac{1}{\rho}>1-\beta +\beta\lambda\left(\frac{1}{p}-\frac{1}{q}\right)$ and $1\leqslant \mu <+\infty$ because $(r,q,p)$ is $\beta_w$-$\mathscr{L}$-admissible, $1<\beta<2$ and $1\leqslant\rho\leqslant r <2$, making sense of the final integration in time. 
\end{proof}
We conclude this subsection by considering the nonlinear $\mathscr{L}$-wave type equation
\begin{align}\label{NLwave}
\begin{split}
\,^{C}\partial_{t}^{\beta}w(t,x)+\mathscr{L}w(t,x)&=F(t,w), \quad t>0,\quad x\in G,\quad 1<\beta<2, \\
w(t,x)|_{_{_{t=0}}}&=w_0(x), \\
\partial_t w(t,x)|_{_{_{t=0}}}&=w_1(x).
\end{split}
\end{align}
In this case the definition of well-posedness is modified in order to take into account the two different data appearing in equation \eqref{NLwave}, so that we formulate it in a product space.  
\begin{defn}
    Let $1<p_0\leqslant2$. We say that the problem \eqref{NLwave} is \textit{locally well-posed} in $L_x^{p_0}(G)\times L_x^{p_0}(G)$ if for any $(w_0^*, w_1^*)\in L_x^{p_0}(G)\times L_x^{p_0}(G)$ there exist a time $T$ and an open ball $B\subset L_x^{p_0}(G)\times L_x^{p_0}(G)$ containing $(w_0^*, w_1^*)$, and a subset $X$ of $C_t^0L_x^{p_0}([0,T]\times G)$, such that for any $(w_0,w_1)\in B$ there exists a strong unique solution $w\in X$ to the integral equation 
   \begin{align*}
    w(t,x)=E_{\beta}(-t^{\beta}\mathscr{L})w_0(x)+&\prescript{RL}{0}I^{1}_{t} E_{\beta}(-t^{\beta}\mathscr{L}) w_1(x) \nonumber \\
    &+  \int_0^t (t-s)^{\beta-1}E_{\beta,\beta}(-(t-s)^\beta\mathscr{L})F(s,w)\, {\rm d}s,
\end{align*}
    and furthermore the map $(w_0,w_1)\mapsto w$ is continuous from $B$ to $X$. 
\end{defn}
Let us fix $1< p_0\leqslant2$, and let $S_{W}^{p_0}([0,T]\times G)$ be defined as the closure of the Schwartz-Bruhat functions under the norm 
\[
\|w\|_{S_{W}^{p_0}([0,T]\times G)} := \sup_{(r,q,p_0) \,\beta_w\text{-}\mathscr{L}\text{-admissible}} \|w\|_{C_t^0L_x^{p_0}([0,T]\times G)}+\|w\|_{L_t^r L_x^q([0,T]\times G)}. 
\] 
We omit the proof of the following theorem since it is essentially the same as the one of Theorem \ref{ThNLH}. For this proof, one needs to use the following inequality, that comes from inequality \eqref{strichWave} (with a non-linear term $F$):   
\begin{align}\label{nonLinHeatEstT-9}
    \|w(t,x)&\|_{S_{W}^{p_0}([0,T]\times G)} \nonumber\\
    \leqslant &\Tilde{C}_{\beta,\lambda,p_0,\rho,T}\left(\|w_0\|_{L_x^{p_0}(G)} + \|w_1\|_{L_x^{p_0}(G)}+\|F(t,w)\|_{L_t^{\rho} L_x^{p_0}([0,T] \times G)}\right),
\end{align}
for any $0<T<+\infty$, some $1\leqslant\rho<2$ and a big enough constant $\Tilde{C}_{\beta,\lambda,p_0,\rho,T}$. This is the constant appearing in the forthcoming theorem, we remark that it is different from the ones appearing for the heat and heat-type equations. 

\begin{thm}
Let $1<p_0\leqslant2$. Suppose that the operator $\mathscr{L}$ satisfies the condition \eqref{trace-condition}. Suppose that there exist $0<T<+\infty$ and $1\leqslant\rho< 2$ such that the nonlinearity $F$ satisfies the following estimate
    \begin{equation*}
        \|F(u) - F(v)\|_{L_t^{\rho} L_x^{p_0}([0,T] \times G)}\leqslant\frac{1}{2\Tilde{C}_{\beta,\lambda,p_0,\rho,T}} \|u -v\|_{S_{W}^{p_0}([0,T]\times G)}
    \end{equation*}
    for all $u,v\in B_{\varepsilon}:= \{u\in S_{W}^{p_0}([0,T]\times G): \|u\|\leqslant \varepsilon\}$, for some $\varepsilon>0$, where the constant $\Tilde{C}_{\beta,\lambda,p_0,\rho,T}$ is from \eqref{nonLinHeatEstT-9}. 
    Then the problem \eqref{NLwave} is locally well-posed in $L_x^{p_0}(G)\times L_x^{p_0}(G)$. 
\end{thm}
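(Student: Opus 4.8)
The plan is to run the argument of Theorem~\ref{ThNLH} essentially verbatim, with the heat semigroup replaced by the Mittag--Leffler propagators of the $\mathscr{L}$-wave type equation and with the data living in the product space $L_x^{p_0}(G)\times L_x^{p_0}(G)$. Concretely, I would invoke the abstract iteration principle \cite[Proposition 1.38]{tao} with $\mathcal{S}=S_{W}^{p_0}([0,T]\times G)$ and $\mathcal{N}=L_t^{\rho}L_x^{p_0}([0,T]\times G)$, where the linear evolution is the map $(w_0,w_1)\mapsto E_{\beta}(-t^{\beta}\mathscr{L})w_0+\prescript{RL}{0}I^{1}_{t} E_{\beta}(-t^{\beta}\mathscr{L})w_1$ and the Duhamel operator is $f\mapsto \int_0^t (t-s)^{\beta-1}E_{\beta,\beta}(-(t-s)^{\beta}\mathscr{L})f(s,x)\,{\rm d}s$.

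First I would fix the time $T$ supplied by the hypothesis together with a ball $B\subset L_x^{p_0}(G)\times L_x^{p_0}(G)$ of radius $R>0$ around an arbitrary $(w_0^*,w_1^*)$, so that $\|w_0\|_{L_x^{p_0}(G)}+\|w_1\|_{L_x^{p_0}(G)}\leqslant R$ whenever $(w_0,w_1)\in B$, and put $\varepsilon:=2\Tilde{C}_{\beta,\lambda,p_0,\rho,T}R$. Applying Proposition~\ref{timeSpaceEstimatesHeatFrac-w} on each $\beta_w$-$\mathscr{L}$-admissible triple (and controlling the $C_t^0L_x^{p_0}$ component of the $S_W^{p_0}$ norm directly from the uniform bound \eqref{uniform-estimate} together with $\prescript{RL}{0}I^{1}_{t}E_{\beta}(-t^{\beta}\mathscr{L})=tE_{\beta,2}(-t^{\beta}\mathscr{L})$), the homogeneous part of the solution satisfies
\[
\big\|E_{\beta}(-t^{\beta}\mathscr{L})w_0+\prescript{RL}{0}I^{1}_{t} E_{\beta}(-t^{\beta}\mathscr{L})w_1\big\|_{S_{W}^{p_0}([0,T]\times G)}\leqslant \Tilde{C}_{\beta,\lambda,p_0,\rho,T}\big(\|w_0\|_{L_x^{p_0}(G)}+\|w_1\|_{L_x^{p_0}(G)}\big)\leqslant \frac{\varepsilon}{2},
\]
while the same proposition, applied with the nonlinear source $F$ in place of $f$ as in Remark~\ref{45} and using Lemma~\ref{cotaOtroProp-w} for the non-homogeneous propagator, gives
\[
\Big\|\int_0^t (t-s)^{\beta-1}E_{\beta,\beta}(-(t-s)^{\beta}\mathscr{L})F(s,w)\,{\rm d}s\Big\|_{S_{W}^{p_0}([0,T]\times G)}\leqslant \Tilde{C}_{\beta,\lambda,p_0,\rho,T}\,\|F(t,w)\|_{L_t^{\rho}L_x^{p_0}([0,T]\times G)},
\]
which is precisely the estimate \eqref{nonLinHeatEstT-9}.

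These two inequalities, combined with the assumed Lipschitz bound on $F$ over $B_{\varepsilon}$ with constant $\tfrac{1}{2\Tilde{C}_{\beta,\lambda,p_0,\rho,T}}$, are exactly the hypotheses needed to apply \cite[Proposition 1.38]{tao}: for data whose linear evolution lies in $B_{\varepsilon/2}$ there is a unique $w\in B_{\varepsilon}\subset S_{W}^{p_0}([0,T]\times G)$ solving the Duhamel integral equation, with $\|w\|_{S_{W}^{p_0}}\leqslant 2\big\|E_{\beta}(-t^{\beta}\mathscr{L})w_0+\prescript{RL}{0}I^{1}_{t} E_{\beta}(-t^{\beta}\mathscr{L})w_1\big\|_{S_{W}^{p_0}}\leqslant\varepsilon$, and the solution map $(w_0,w_1)\mapsto w$ is continuous from $B$ into $X:=B_{\varepsilon}$; since $S_{W}^{p_0}\hookrightarrow C_t^0L_x^{p_0}$ this $X$ has the required form, and as $(w_0^*,w_1^*)$ was arbitrary we conclude local well-posedness in $L_x^{p_0}(G)\times L_x^{p_0}(G)$.

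The step demanding the most care — though morally routine and already illustrated in Figure~\ref{figura7} — is the admissibility bookkeeping specific to the wave regime: one must check that the extra constraint $1-\beta\lambda(\tfrac1{p_0}-\tfrac1q)<\tfrac1r$ forced by the second datum $w_1$, together with $1\leqslant\rho\leqslant r<2$, still leaves a nonempty family of $\beta_w$-$\mathscr{L}$-admissible triples, so that $\|\cdot\|_{S_W^{p_0}}$ is a genuine finite norm and the Young exponent $\tfrac1\mu=1+\tfrac1r-\tfrac1\rho$ appearing in Proposition~\ref{timeSpaceEstimatesHeatFrac-w} keeps the time convolution integrable. This is the only genuine difference from the proof of Theorem~\ref{ThNLH}; everything else transfers unchanged.
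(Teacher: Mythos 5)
Your proposal is correct and follows exactly the route the paper intends: the paper itself omits this proof, stating only that it is ``essentially the same as the one of Theorem~\ref{ThNLH}'' once one has the estimate \eqref{nonLinHeatEstT-9}, and your write-up is precisely that argument carried out — Tao's abstract iteration with $\mathcal{S}=S_{W}^{p_0}$, $\mathcal{N}=L_t^{\rho}L_x^{p_0}$, the linear and Duhamel bounds from Proposition~\ref{timeSpaceEstimatesHeatFrac-w}, and the product-space ball for the two data. Your closing remark about checking non-emptiness of the $\beta_w$-$\mathscr{L}$-admissible triples is the right point of care and matches the paper's own discussion around Figure~\ref{figura7}.
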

\subsection{$\mathscr{L}$-evolutionary integral equation} We conclude this section with a discussion on how one could execute the preceding studies for a more general type of equations. However we will see that the generality does not allow us to give a precise answer. Most likely one would need to perform a different analysis depending on each particular situation. 

\medskip We consider the following integral  equation of scalar type: 
\begin{equation}\label{volterra-e-locallyNo}
w(t,x)=f(t,x)+\int_0^t k(t-s)\mathscr{L}w(s,x)\,{\rm d}s,\quad t\in [0,T],\quad x\in G,
\end{equation}
where $G$ is a separable unimodular locally compact group and $\mathscr{L}$ is a positive left invariant operator acting on $G$. Assuming that $f$ is in a suitable Sobolev space and $f(0,x)=0$ for any $x\in G$, by \cite[Proposition 1.2]{pruss} a mild solution to equation \eqref{volterra-e-locallyNo} is given by 
\[
w(t,x) = \int_0^t S(t-s)g(s,x) \,{\rm d}s,\quad g(s,x):=\partial_s f(s,x). 
\]
Thus using the representation above, let us try to estimate the mixed-type norm of the solution: 
\begin{align*}
    \|w(t,x)\|_{L_t^r L_x^q(\R^+ \times G)} & =  \left\|\int_0^t S(t-s)g(s,x)\,{\rm d}s\right\|_{L_t^r L_x^q([0,T] \times G)} \\
    & = \left\| \left\|\int_0^t S(t-s)g(s,x) \,{\rm d}s\right\|_{L_x^q(G)}\right\|_{L_t^r([0,T])}\\
    &\leqslant \left\|\int_0^t \left\|S(t-s)g(s,x)\right\|_{L_x^q(G)} \,{\rm d}s\right\|_{L_t^r([0,T])}\\
    &\leqslant  C_{\vec{\beta},\lambda,p,q} \left\|\mathfrak{p}(t)*\|g(t, \cdot)\|_{L_x^p(G)}\right\|_{L_t^r([0,T])} \text{ (by Theorem \ref{integral-thm})}\\
\end{align*}
where $\mathfrak{p}(t):= \bigg(\int_0^{t} k(\tau){\rm d}\tau\bigg)^{-\lambda\left(\frac{1}{p}-\frac{1}{q}\right)}.$ At this point we would have to deal with convolution inside of the latter expression. Thus our intention is to proceed utilizing the Young inequality and this brings up the problem of proving finiteness of the following norm
\[
\left\|\mathfrak{p}(t)\right\|_{L_t^\mu([0,T])},\quad \text{where}\quad \frac{1}{\mu}:=1+\frac{1}{r}-\frac{1}{\rho},\quad\text{for some suitable}\quad \rho.
\]
In all previous equations we were dealing with the case $\mathfrak{p}(t)=t^{-\alpha}$ for some $\alpha$, so in order to guarantee the existence of the $L_t^\mu$ norm we just needed to ask for the condition $\alpha\mu<1$ which resulted in the definition of triples. In this general framework is not easy to provide a systematic study, rather we suspect that each different kernel $k$ would require an specific analysis creating different families of triples.  

\section{Acknowledgements}
The authors were supported by the FWO Odysseus 1 grant G.0H94.18N: Analysis and Partial Differential Equations and by the Methusalem programme of the Ghent University Special Research Fund (BOF) (Grant number 01M01021). Michael Ruzhansky is also supported by the EPSRC grant EP/V005529/1 and FWO Senior Research Grants G011522N and G022821N. No new data was collected or generated during the course of this research.

\end{document}